\newtheorem{thm}{Theorem}[section]
\newtheorem{prop}[thm]{Proposition}
\newtheorem{lemma}[thm]{Lemma}
\newtheorem{cor}[thm]{Corollary}
\newtheorem{conj}[thm]{Conjecture}
\newtheorem{def-prop}[thm]{Definition-Proposition}
\newtheorem{prop-def}[thm]{Proposition-Definition}
\theoremstyle{definition} 
\newtheorem{defi}[thm]{Definition}
\newtheorem{rmk}[thm]{Remark}
\newcommand{\C}{{\bf C}}
\newcommand{\N}{{\bf N}}
\newcommand{\Q}{{\bf Q}}
\newcommand{\rC}{{\mathcal C}}
\newcommand{\rD}{{\mathcal D}}
\newcommand{\rF}{{\mathcal F}}
\newcommand{\rI}{{\mathcal I}}
\newcommand{\rM}{{\mathcal M}}
\newcommand{\rO}{{\mathcal O}}
\newcommand{\rP}{{\mathcal P}}
\newcommand{\rS}{{\mathcal S}}
\newcommand{\rT}{{\mathcal T}}
\newcommand{\rU}{{\mathcal U}}
\newcommand{\rV}{{\mathcal V}}
\newcommand{\rW}{{\mathcal W}}
\newcommand{\rX}{{\mathcal X}}
\newcommand{\rY}{{\mathcal Y}}
\newcommand{\rZ}{{\mathcal Z}}
\newcommand{\ie}{\textit {i.e.}}
\newcommand{\cf}{\textit {cf.}~}
\newcommand{\vs}{\textit {vs.}~}
\newcommand{\loccit}{\textit {loc.cit.}~}
\newcommand{\resp}{\textit {resp.}~}
\newcommand{\Bl}{\mathop{\rm Bl}\nolimits} 
\newcommand{\CH}{\mathop{\rm CH}\nolimits} 
\newcommand{\cl}{\mathop{\rm cl}\nolimits} 
\newcommand{\codim}{\mathop{\rm codim}\nolimits} 
\newcommand{\dual}{\mathop{^\vee}\nolimits} 
\newcommand{\Gr}{\mathop{\rm Gr}\nolimits} 
\newcommand{\LGr}{\mathop{\rm LGr}\nolimits} 
\newcommand{\OGr}{\mathop{\rm OGr}\nolimits} 
\newcommand{\Hilb}{\mathop{\rm Hilb}\nolimits}
\newcommand{\id}{\mathop{\rm id}\nolimits} 
\newcommand{\im}{\mathop{\rm Im}\nolimits} 
\newcommand{\Ker}{\mathop{\rm Ker}\nolimits} 
\renewcommand{\P}{\mathop{\bf P}\nolimits} 
\newcommand{\Pic}{\mathop{\rm Pic}\nolimits} 
\newcommand{\pr}{\mathop{\rm pr}\nolimits} 
\newcommand{\rank}{\mathop{\rm rank}\nolimits}
\newcommand{\Spec}{\mathop{\rm Spec}\nolimits}
\newcommand{\Sym}{\mathop{\rm Sym}\nolimits} 
\newcommand{\vide}{{\varnothing}} 
\renewcommand{\bar}{\overline}
\newcommand{\inj}{\hookrightarrow}
\newcommand{\surj}{\twoheadrightarrow}
\newcommand{\cart}{\ar@{}[dr]|\square} 
\renewcommand{\dual}{^{\vee}} 
\newcommand{\isom}{\simeq} 
\renewcommand{\tilde}{\widetilde}
\newcommand{\deff}{\mathrel{:=}}
\renewcommand{\1}{\mathop{\mathds{1}}\nolimits} 
\begin{document}
	
	\title[The generalized Franchetta conjecture for some hyper-K\"ahler
	varieties]{The generalized Franchetta conjecture \linebreak 
		for some hyper-K\"ahler varieties}
	\author{Lie Fu}
	\address{Institut Camille Jordan, Universit\'e Claude Bernard Lyon 1, France}
	\email{fu@math.univ-lyon1.fr}
	
	\author{Robert Laterveer}
	\address{Institut de Recherche Math\'ematique Avanc\'ee, CNRS, Universit\'e de
		Strasbourg, France}
	\email{robert.laterveer@math.unistra.fr}
	
	\author{Charles Vial}
	\address{Universit\"at Bielefeld, Germany}
	\email{vial@math.uni-bielefeld.de}

	\makeatletter
	\let\@wraptoccontribs\wraptoccontribs
	\makeatother
	
	\contrib[\qquad\qquad\qquad\qquad\qquad\qquad\lowercase{with an appendix joint
		with}]{Mingmin Shen}
	\address{KdV Institute for Mathematics, University of Amsterdam, Netherlands}
	\email{M.Shen@uva.nl}

	\thanks{Lie Fu is supported by the Agence Nationale de la Recherche through
		ECOVA (ANR-15- CE40-0002), HodgeFun (ANR-16-CE40-0011), LABEX MILYON
		(ANR-10-LABX-0070) of Universit\'e de Lyon, within the program
		\emph{Investissements d'Avenir} (ANR-11-IDEX-0007), and \emph{Projet
			Inter-Laboratoire} 2017 by F\'ed\'eration de Recherche en Math\'ematiques
		Rh\^one-Alpes/Auvergne CNRS 3490.}
	\begin{abstract}
		The generalized Franchetta conjecture for hyper-K\"ahler varieties predicts that
		an algebraic cycle on the universal family of certain polarized hyper-K\"ahler
		varieties is fiberwise rationally equivalent to zero if and only if it vanishes
		in cohomology fiberwise. 
		We establish Franchetta-type results for certain low (Hilbert) powers of low
		degree K3 surfaces, 
		for the Beauville--Donagi family of Fano varieties of lines on cubic fourfolds
		and its relative square, and for $0$-cycles and codimension-$2$ cycles for the
		Lehn--Lehn--Sorger--van Straten family of hyper-K\"ahler eightfolds. We also
		draw many consequences in the direction of the Beauville--Voisin conjecture as
		well as Voisin's refinement involving coisotropic subvarieties. In the appendix,
		we establish a new relation among tautological  cycles on the square of the Fano
		variety of lines of a smooth cubic fourfold and provide some applications.
	\end{abstract}
	\maketitle
	\setcounter{tocdepth}{1}
	\tableofcontents
	
	\newpage
	\section{Introduction}\label{sect:intro}
	The original Franchetta conjecture \cite{MR0069538} (proven in \cite{MR700769},
	see also \cite{MR870734} and \cite{MR895568}) states the following\,:
	
	\begin{thm}[\cite{MR0069538}, \cite{MR700769}, \cite{MR870734}, \cite{MR895568}]
		For an integer $g\geq 2$, let $\rM_{g}$ be the moduli stack of smooth projective
		curves of genus $g$, and let $\rC\to \rM_{g}$ be the universal curve.
		Then for any line bundle $L$ on $\rC$ and any closed point $b\in \rM_{g}$, the
		restriction of $L$ to the fiber $C_{b}$ is a multiple of the canonical bundle of
		$C_{b}$.
	\end{thm}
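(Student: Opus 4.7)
The plan is to translate the statement into a computation of the Picard group of the universal curve, viewed as a moduli stack in its own right. The universal curve $\rC \to \rM_g$ is naturally identified with the moduli stack $\rM_{g,1}$ of $1$-pointed smooth projective genus $g$ curves, with the fiber above $[C]\in\rM_g$ being $C$ itself (the marked point varying). Under this identification, a line bundle $L$ on $\rC$ becomes a line bundle on $\rM_{g,1}$, and its restriction to a fiber $C_b$ is the restriction of that bundle to the fiber of the forgetful map $\rM_{g,1}\to\rM_g$ over $b$.

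First I would invoke (or re-derive) the theorem of Harer and Arbarello--Cornalba that, for $g\geq 3$,
\[
\Pic(\rM_{g,1})\;=\;\Z\lambda\;\oplus\;\Z\psi,
\]
where $\lambda$ is the pullback of the Hodge class from $\rM_g$ and $\psi=c_1(\omega_{\rC/\rM_g})$ is the first Chern class of the relative dualizing sheaf, evaluated at the universal marked point. The low genus cases $g=2$ would be handled separately, either by a direct stacky analysis of the hyperelliptic structure or by passing to a suitable level cover and descending.

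Next I would compute the two restriction maps to a fiber. Since $\lambda$ is pulled back from $\rM_g$, its restriction to any fiber $C_b$ is trivial. On the other hand, the restriction of $\omega_{\rC/\rM_g}$ to $C_b$ is tautologically the canonical bundle $K_{C_b}$, since the relative cotangent sheaf of the universal curve specializes to the absolute cotangent sheaf on the fiber. Therefore writing $L=a\lambda+b\psi$ in $\Pic(\rC)$, the restriction to $C_b$ is $L|_{C_b}=b\,K_{C_b}$, proving the theorem.

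The main obstacle is of course the Picard group computation itself, which is the arithmetic-geometric core of the argument. One route is Harer's topological approach through the cohomology of the mapping class group $\Mod_{g,1}$, using a finite presentation and computing $H^2(\Mod_{g,1},\Z)$; an alternative is the algebro-geometric strategy of Arbarello--Cornalba, where one studies the boundary of $\bar\rM_{g,1}$ and uses the test-curve method to rule out additional classes. Either route requires careful control of stabilizers (hence of automorphisms of pointed curves), especially in low genus; once this is in place, steps 1, 3 and 4 of the plan above are essentially formal.
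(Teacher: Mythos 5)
The paper does not actually prove this classical theorem --- it only states it with citations to Harer, Mestrano and Arbarello--Cornalba --- and your proposal is precisely the argument of those cited references: identify $\rC$ with $\rM_{g,1}$, invoke the computation $\Pic(\rM_{g,1})=\Z\lambda\oplus\Z\psi$ (with the torsion adjustment in genus $2$), and observe that $\lambda$ restricts trivially to fibers while $\psi=c_1(\omega_{\rC/\rM_g})$ restricts to the canonical class. Your plan is correct, with the caveat you yourself flag that all the substance lies in the Picard-group computation you are quoting rather than re-deriving.
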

	
	In the case of the universal family of K3 surfaces $\rS\to \rF_{g}$, where
	$\rF_{g}$ is the moduli stack of polarized K3 surfaces of genus $g$,
	O'Grady proposed in \cite{MR3115830} the following analogue of the Franchetta
	conjecture. Recall that the Beauville--Voisin class (\cite{BVK3}) of a
	projective K3 surface $S$ is the degree-$1$ $0$-cycle class $\mathfrak{o}_{S}$
	with support any closed point lying on a rational curve of the K3 surface. It
	enjoys the property that the intersection of any two divisors, as well as the
	second (Chow-theoretic) Chern class of $S$, are multiples of $\mathfrak{o}_{S}$.
	In the sequel, the Chow groups of stacks are the ones defined in Vistoli
	\cite{Vistoli} (see also Kresch \cite{Kresch}) and are always considered with
	rational coefficients. 
	
	\begin{conj}[O'Grady \cite{MR3115830}]\label{conj:FranchettaK3}
		Notation is as above. Then for any algebraic cycle $z\in \CH^{2}(\rS)$ and any
		point $b\in \rF_{g}$, the restriction of $z$ to the fiber K3 surface $S_{b}$ is
		a multiple of the Beauville--Voisin class of $S_{b}$. 
	\end{conj}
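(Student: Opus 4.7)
The plan is to follow a \emph{spread of cycles} strategy, in the spirit of the proof of the classical Franchetta conjecture for curves, and to execute it in the low-genus cases where a generic polarized K3 surface admits an explicit realization as a complete intersection in a projective space. For $g=3,4,5$, the generic polarized K3 of genus $g$ is, respectively, a quartic in $\P^3$, a $(2,3)$-complete intersection in $\P^4$, or a $(2,2,2)$-complete intersection in $\P^5$. One chooses a smooth quasi-projective $U$ inside the appropriate Hilbert scheme parametrizing such smooth intersections, so that the forgetful map $U \to \rF_g$ is smooth and dominates a dense open substack. It then suffices to prove the statement fiberwise for the pulled-back universal family $\sS \to U$: any fiber of $\rS$ over a point in the image of $U$ coincides, as a polarized K3 surface, with a fiber $S_u$ of $\sS$.

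The core of the argument is then to show that every class in $\CH^2(\sS)$ is \emph{tautological}, \ie a polynomial in pullbacks from $U$ and the relative polarization $h$ coming from the ambient trivial projective bundle $\P^N \times U$. Embedding $\sS$ as a complete intersection in $\P^N \times U$, the projective bundle formula gives an explicit $\CH^\bullet(U)$-module decomposition of $\CH^\bullet(\P^N \times U)$ generated by the powers of $h$. A Koszul/excess-intersection analysis of the defining equations of $\sS$ (in the spirit of Voisin's stratified projection method) transfers this generation to $\CH^\bullet(\sS)$, so that in codimension~$2$ every class has the shape $\lambda h^2 + h\cdot\pi^\ast\alpha + \pi^\ast\beta$, with $\lambda\in \Q$, $\alpha\in\CH^1(U)$, and $\beta\in\CH^2(U)$.

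Once tautological generation is established, the conclusion on the image of $U$ in $\rF_g$ is immediate: pullbacks from $U$ restrict to zero in positive codimension on any fiber, while $h^2|_{S_u}$ is a rational multiple of the Beauville--Voisin class $\mathfrak{o}_{S_u}$ by \cite{BVK3}. Points of $\rF_g$ outside the image of $U$ are handled by specializing a generic polarized K3 to the given one along a smooth one-parameter family, and applying the standard specialization map for Chow groups, which commutes with restriction to the central fiber.

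The main difficulty is the tautological generation step. In low genus it amounts to a careful but finite computation on Chow groups of projective or Grassmann bundles, and should be tractable by explicit means. In higher genus, however, no such GIT-type presentation of $\rF_g$ is available, and the Chow ring of the moduli stack is not known to be finitely generated in the relevant degrees; controlling the image of $\CH^2(\sS)\to\CH^2(S_u)$ becomes the essential obstacle. This is the reason Conjecture~\ref{conj:FranchettaK3} remains open beyond the low-genus cases treated in this paper.
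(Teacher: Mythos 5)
The statement you are proving is stated in the paper as a \emph{conjecture}, not a theorem: the paper does not prove it, and explicitly records that it is known only for $g\leq 10$ and $g=12,13,16,18,20$ (by the Mukai-model argument of Pavic--Shen--Yin, \cite{FranchettaK3}) and ``is still wide open'' otherwise. Your proposal, by your own admission in its last paragraph, establishes only the complete-intersection cases $g=3,4,5$. Since the conjecture quantifies over all $g$, this is a genuine gap: what you have written is not a proof of the statement but of three special cases of it, and no amount of refinement of the spread/tautological-generation strategy is currently known to close the gap for general $g$, precisely because no Mukai-type presentation of $\rF_g$ exists there.

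For the cases you do treat, the outline is correct and is essentially the known argument (it is also the single-copy version of what the paper itself does in \S\ref{sect:Hilb2K3}, \cf Lemma \ref{lemma:Restriction} and Proposition \ref{prop:Restriction}): over the full linear system $B=\P H^0(\P,E)$ the evaluation map $p\colon \rS\to\P$ is a projective bundle, the projective bundle formula plus the identity $\xi=\pi^*h_B+p^*(\alpha)$ shows that the restriction of any class of $\CH^2(\rS)$ to a fiber $S_b$ lies in the image of $\CH^2(\P)\to\CH^2(S_b)=\Q\cdot h^2|_{S_b}$, and $h^2|_{S_b}$ is a multiple of $\mathfrak{o}_{S_b}$ by \cite{BVK3}; Remark \ref{rmk:Parameter} then descends the statement from the parameter space to the moduli stack, and specialization handles the non--complete-intersection polarized K3s. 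One caution: your tautological-generation step is phrased for ``a smooth quasi-projective $U$ inside the appropriate Hilbert scheme,'' but the argument genuinely needs $U$ to be (an open subset of) the \emph{full} linear system, so that the second projection is a projective bundle; for an arbitrary smooth $U$ dominating $\rF_g$ the claim that $\CH^2(\sS)$ is generated by $h$ and pullbacks from $U$ would not follow from the projective bundle formula and is not true in general.
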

	
	Using Mukai models, Conjecture \ref{conj:FranchettaK3} is verified in
	\cite{FranchettaK3} for K3 surfaces of genus $g\leq 10$ and $g=12, 13, 16, 18,
	20$. Otherwise, Conjecture \ref{conj:FranchettaK3} is still wide open.
	
	The main goal of the paper is to investigate the following higher-dimensional
	analogue of O'Grady's Conjecture \ref{conj:FranchettaK3} concerning projective 
	hyper-K\"ahler varieties. Recall that a smooth projective variety is called
	\emph{hyper-K\"ahler} or \emph{irreducible holomorphic symplectic}, if it is
	simply connected and $H^{2,0}$ is generated by a nowhere degenerate holomorphic
	$2$-form.
	
	\begin{conj}[Generalized Franchetta conjecture, \cf
		\cite{BergeronLi}]\label{conj:FranchettaHK}
		Let $\rF$ be the moduli stack of a locally complete family of polarized
		hyper-K\"ahler varieties, 
		and let $\rX{}\to \rF{}$ be the universal family. For any $z\in
		\CH^\ast(\rX)_{\Q}$, if its restriction to a very general fiber is homologically
		trivial then its restriction to any fiber is (rationally equivalent to) zero. 
	\end{conj}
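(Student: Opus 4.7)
The plan is to attack Conjecture~\ref{conj:FranchettaHK} \emph{family by family}: in the stated generality the statement is open, but for each specific locally complete family of polarized hyper-K\"ahler varieties one can hope to reduce it to a computation inside a well-understood ``tautological'' subring of the Chow ring of the universal family. This is what I would do for the families considered in the body of the paper.

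The first step is a standard spread-of-cycles argument. After replacing $\rF$ by a smooth cover if necessary, so that the Chow groups of \cite{Vistoli} agree with Chow groups of a genuine smooth family $\rX \to \rF$, the cohomology classes $[z|_{\rX_{b}}]$ assemble into a flat section of the local system $R^{*}\pi_{*}\Q$. A flat section that vanishes on a Zariski dense subset vanishes identically, so the hypothesis propagates from ``very general $b$'' to ``every $b\in\rF$''. Conjecture~\ref{conj:FranchettaHK} is thus equivalent to\,: \emph{if $z|_{\rX_{b}}$ is homologically trivial for every $b$, then $z|_{\rX_{b}}=0$ in $\CH^{*}(\rX_{b})_{\Q}$.}

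To handle this rephrased statement I would introduce the subring $\mathrm{GDCH}^{*}(\rX_{b})\subset \CH^{*}(\rX_{b})_{\Q}$ of \emph{generically defined} cycles, namely those that are restrictions of a global cycle on $\rX$. The desired statement amounts to the injectivity of the cycle class map $\mathrm{GDCH}^{*}(\rX_{b})\to H^{*}(\rX_{b},\Q)$ at every $b$. The family-specific work consists in exhibiting an explicit finite set of tautological generators of $\mathrm{GDCH}^{*}$ (the polarization, Chern classes of the tangent bundle, Markman's universal classes, incidence correspondences with an associated K3 surface or cubic fourfold, and, in the Hilbert scheme case, the Beauville--Voisin class $\mathfrak{o}_{S}$), checking that they span $\mathrm{GDCH}^{*}(\rX_{b})$ uniformly in $b$, and then proving enough Chow-theoretic relations among them that the tautological subring already coincides with its cohomological image.

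The main obstacle is precisely this last step\,: enumerating all generically defined cycles and ruling out ``unexpected'' rational equivalences beyond those visible in cohomology. For low Hilbert powers of K3 surfaces the multiplicative Chow--K\"unneth decomposition of $S^{[n]}$ gives enough control; for the Fano variety of lines of a cubic fourfold and its relative square one needs the additional tautological relation proved in the appendix; for $0$-cycles and codimension-$2$ cycles on the Lehn--Lehn--Sorger--van Straten eightfold the parametrization by twisted cubics provides the necessary explicit handle. Extending this philosophy to arbitrary codimensions or to larger Hilbert powers appears to require substantially new ideas, which is why Conjecture~\ref{conj:FranchettaHK} is only established here for those specific families and codimensions.
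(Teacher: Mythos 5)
The statement you were asked to prove is a \emph{conjecture}: the paper offers no proof of it in the stated generality, and your proposal correctly recognizes this and instead describes the paper's actual strategy for the special families it does treat, namely reducing the problem to the injectivity of the cycle class map on the subring of generically defined (tautological) cycles via projective-bundle and stratified-projective-bundle arguments, and then verifying that injectivity using known and newly established tautological relations. This matches the paper's approach essentially verbatim (including the specific inputs you name: the multiplicative Chow--K\"unneth decomposition for Hilbert powers of K3 surfaces, the new relation of the appendix for the square of the Fano variety of lines, and Voisin's rational map for the Lehn--Lehn--Sorger--van Straten eightfolds), so there is nothing to correct.
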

	
	Here, $\rF$ and $\rX$ are assumed to exist in the category of smooth
	Deligne--Mumford stacks. If one prefers to avoid stacks, one can add some level
	structure and obtain a universal family in the category of quasi-projective
	varieties, \emph{cf.} \cite[Section 3.4]{BergeronLi}.

	We note that  a cycle is homologically trivial when restricted to a very general
	fiber if and only if it is homologically trivial when restricted to any fiber.
	Given any smooth family of projective varieties $\rX \to \rF$ with $\rF$ smooth,
	we will say that $\rX \to \rF$ satisfies the \emph{Franchetta property} if  for
	any $z\in \CH^\ast(\rX)_{\Q}$ which is fiberwise homologically trivial, its
	restriction to any fiber is (rationally equivalent to) zero.
	
	Although it would seem optimistic\footnote{When $g\geq 4$, the relative square
		of the universal curve of genus $g$ does not satisfy the Franchetta property
		because the degree-0 0-cycle $p_1^*K_C \cdot p_2^* K_C -\deg(K_C)p_1^*K_C\cdot
		\Delta_C$ is not rationally trivial for $C$ very general of genus $g\geq 4$\,;
		see \cite{gg}.} 
	that Conjecture \ref{conj:FranchettaHK} could hold more generally for
	self-products of hyper-K\"ahler varieties -- \ie, $\rX \times_{\rF}
	\cdots\times_{\rF} \rX \to \rF$ satisfies the Franchetta property in the sense
	above -- we may nevertheless ask, given a locally complete family $\rX \to \rF$
	of polarized hyper-K\"ahler varieties, for which integers $n$ does $\rX^{n/
		\rF}$ satisfy the Franchetta property. We provide some results in that direction
	in Theorems \ref{thm:mainHilb2}, \ref{thm:mainHilbmore}, \ref{thm:mainBD2} and
	\ref{thm:mainLLSvS} below.
	
	Recently, Bergeron and Li \cite[Theorem 8.1.1]{BergeronLi} have proven the
	cohomological version of the generalized Franchetta conjecture
	\ref{conj:FranchettaHK} for relative $0$-cycles when the second Betti number is
	sufficiently large, which is an important support in favor of the conjecture, at
	least for 0-cycles.
	
	Let us also mention that Conjecture \ref{conj:FranchettaHK} is closely related
	to the so-called Beauville--Voisin conjecture and its refinement (see
	Conjectures \ref{conj:BV} and \ref{conj:BV+}). On the one hand, the proof of
	some of our main results actually uses some known cases of the Beauville--Voisin
	conjecture (especially \cite{MR2435839})\,; on the other hand, the generalized
	Franchetta conjecture implies the part of the Beauville--Voisin conjecture
	involving only Chern classes and the polarization, see Proposition
	\ref{prop:ChernBV}.
	
	We outline the main results of the paper, which provide more evidence for the
	generalized Franchetta conjecture.
	
	\subsection{Powers and Hilbert powers of some K3 surfaces}
	We can establish Franchetta-type results for the relative squares and cubes, as
	well as the relative Hilbert squares and Hilbert cubes, of the universal family
	of K3 surfaces which are complete intersections in projective spaces.
	\begin{thm}\label{thm:mainHilb2}
		Let $\rM$ be the moduli stack of smooth K3 surfaces of genus $g=3, 4$ or $5$,
		and let $\rS\to \rM$ be the universal family. Let $\rX$ be $\rS\times_{\rM}
		\rS$, $\rS\times_{\rM}\rS\times_{\rM} \rS$, $\Hilb^{2}_{\rM}\rS$,
		$\rS\times_{\rM}\Hilb^{2}_{\rM}\rS$ or $\Hilb^{3}_{\rM}\rS$. For any cycle $z\in
		\CH^\ast(\rX)_{\Q}$ and any point $b\in \rM$, the restriction of $z$ to the
		fiber $X_{b}$ is zero if and only if it is numerically trivial.
	\end{thm}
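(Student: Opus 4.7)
The plan is to adapt Voisin's stratified-projective-bundle / spread-out method to the complete-intersection realization of low-genus K3 surfaces, and then to descend from the relative Cartesian powers of $\rS$ to the Hilbert powers via explicit blow-up descriptions.

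First I would replace the moduli stack $\rM$ by a convenient linear parameter space. For $g \in \{3,4,5\}$, a polarized K3 surface of genus $g$ embeds as a smooth complete intersection in $\P^{g+1}$: a quartic in $\P^3$ when $g=3$, a $(2,3)$-complete intersection in $\P^4$ when $g=4$, and an intersection of three quadrics in $\P^5$ when $g=5$. Let $B$ be the corresponding parameter space --- namely $\P(H^0(\P^3,\rO(4)))$, a projective bundle over $\P(H^0(\P^4,\rO(2)))$, and the Grassmannian $\Gr(3, H^0(\P^5,\rO(2)))$ respectively --- and let $U \subset B$ be the Zariski open locus over which the universal complete intersection is a smooth K3 surface. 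The natural morphism $U \to \rM$ is a torsor under a reductive linear algebraic group, so pullback of rational Chow groups along it is injective, and it therefore suffices to establish the Franchetta property for the pulled-back family $\rX_U \to U$.

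Next I would use the ambient projective embedding to extract a small set of tautological generators for $\CH^*(\rX_U)_{\Q}$. The universal relative $n$-fold power $\rS^{n/U}$ sits as a complete intersection in $U \times (\P^{g+1})^n$, and iterated application of the projective bundle formula together with the excess intersection formula shows that $\CH^*(\rS^{n/U})_{\Q}$ is generated over $\CH^*(U)_{\Q}$ by the pulled-back hyperplane classes $h_1, \dots, h_n$ together with the classes of the relative partial diagonals $\Delta_{ij}$. For the Hilbert powers, Fogarty's description $\Hilb^{2}_U\rS = \Bl_{\Delta_{\rS/U}}(\Sym^{2}_U\rS)$ combined with the $\gS_2$-equivariant blow-up formula gives the corresponding generating set for $\Hilb^{2}_U\rS$ and for $\rS\times_U\Hilb^{2}_U\rS$; and for $\Hilb^{3}_U\rS$ I would interpolate through the nested Hilbert scheme $\Hilb^{2,3}_U\rS$, which maps to $\Hilb^{3}_U\rS$ as a blow-up along the small diagonal and to $\rS\times_U\Hilb^{2}_U\rS$ as an incidence divisor, and hence inherits an accessible Chow ring.

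With an explicit tautological generating set in hand, the proof reduces to a fiberwise check that the $\Q$-subalgebra of $\CH^*(X_b)$ generated by the restrictions of these generators injects into $H^*(X_b,\Q)$. The key inputs are the Beauville--Voisin relations on $S_b$ (the product of any two divisor classes, as well as $c_2(S_b)$, is a rational multiple of the Beauville--Voisin class $\mathfrak{o}_{S_b}$), together with their known liftings to $\Hilb^{2}S_b$ and $\Hilb^{3}S_b$ furnished by the multiplicative Chow--K\"unneth decomposition due to Voisin, Shen--Vial, and Yin. Combined with the explicit description of the induced $\gS_n$-action on $\CH^*(S_b^n)_{\Q}$ in small degree, these relations exhaust all numerical identities among the tautological generators and lift them to rational equivalence. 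The main obstacle I foresee is the control of $\Hilb^{3}_{\rM}\rS$ and of the mixed product $\rS\times_{\rM}\Hilb^{2}_{\rM}\rS$: their tautological subrings contain classes coming from exceptional divisors, nested incidence loci, iterated partial diagonals, and mutual intersections, and the combinatorial bookkeeping required to match each numerical relation to a rational identity --- in particular the descent from the $\gS_3$-equivariant tautological ring of $\rS^{3/\rM}$ to the tautological ring of the Hilbert cube --- is where the bulk of the technical work is concentrated.
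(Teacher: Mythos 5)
Your overall strategy is the one the paper follows: pass from the moduli stack to the linear parameter space $B$ of complete intersections (the paper's Remark \ref{rmk:Parameter} justifies this simply by surjectivity of $B^\circ\to\rM$ on fibers, with no need for your torsor/injectivity claim), use the (stratified) projective bundle structure of the evaluation map $\rS^{n/B}\to\P^n$ plus localization and the excess intersection formula to show that the fiberwise image of $\CH^\ast(\rS^{n/B})$ is the subalgebra generated by the $h_i$ and the partial diagonals $\Delta_{ij}$ (Propositions \ref{prop:Restriction} and \ref{prop:RestrIm3}; for $n=3$ the extra stratum is the collinearity locus, whose contribution is shown to be redundant precisely by the excess intersection computation you allude to), and then conclude by Voisin's injectivity of the cycle class map on this tautological subalgebra, which rests on the Beauville--Voisin decomposition of the small diagonal. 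The genuine divergence is your treatment of $\Hilb^3$: the paper invokes de Cataldo--Migliorini's correspondences $(U_\ast,V_\ast,W_\ast)\colon\CH^\ast(S^{[3]})\hookrightarrow\CH^\ast(S^3)\oplus\CH^\ast(S^2)\oplus\CH^\ast(S)$, which are defined over the base and reduce the Hilbert cube (and the mixed product $\rS\times_B\Hilb^2_B\rS$) formally to the already-settled Cartesian cases, whereas you propose to go through the nested Hilbert scheme $S^{[2,3]}$. That route can be made to work, but two of your stated geometric facts are off: $S^{[2,3]}\to S^{[3]}$ is a generically finite map of degree $3$ (one then uses $f_\ast f^\ast=3\cdot\id$), not a blow-up along the small diagonal, and it is the map $S^{[2,3]}\to S^{[2]}\times S$ that is the blow-up, along the universal length-$2$ subscheme. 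Even with these corrected, the bookkeeping you anticipate is exactly what de Cataldo--Migliorini lets one avoid. Finally, note that the theorem asserts equivalence with \emph{numerical} triviality, so one must also know that numerical and homological equivalence agree on the relevant subrings of $\CH^\ast(X_b)$; this holds here because the tautological subalgebra injects into cohomology with image on which the intersection pairing is nondegenerate, but your sketch (like a quick reading of the paper's proof) leaves this implicit.
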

	The proof will be given in \S \ref{sect:Hilb2K3} for squares and Hilbert squares
	and in \S\ref{subsect:Hilb3K3} for the other cases. We note that, thanks to the
	result of de Cataldo and Migliorini \cite{MR1919155}, the crucial cases are the
	self-products $\rS\times_{\rM} \rS$, $\rS\times_{\rM}\rS\times_{\rM} \rS$. 

	By pushing our techniques further (\cf  \S\ref{subsect:SPB}), we can also treat
	some other cases of (Hilbert) powers of K3 surfaces\,:
	\begin{thm}\label{thm:mainHilbmore}
		The following families satisfy the Franchetta property\,:
		\begin{enumerate}[$(i)$]
			\item $\rS\times_{\rM}\rS$, $\Hilb^{2}_{\rM}\rS$,
			$\rS\times_{\rM}\rS\times_{\rM}\rS$, $\rS\times_{\rM}\Hilb^{2}_{\rM}\rS$ and
			$\Hilb^{3}_{\rM}\rS$, where $\rS\to \rM$ is the universal family of smooth K3
			surfaces of genus 2 (double planes).
			\item
			$\Hilb^{r_{1}}_{\rM}\rS\times_{\rM}\cdots\times_{\rM}\Hilb^{r_{m}}_{\rM}\rS$,
			where $\rS\to \rM$ is the universal family of smooth quartic K3 surfaces and
			$r_{1}+\cdots+r_{m}\leq 5$.
			\item The relative square and relative Hilbert square of the universal family of
			K3 surfaces of genera $6, 7, 8, 9, 10, 12$.
		\end{enumerate} 
	\end{thm}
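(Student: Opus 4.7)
The plan is to follow the template of Theorem \ref{thm:mainHilb2} for each of $(i)$--$(iii)$: first reduce Hilbert powers to ordinary self-products using de Cataldo--Migliorini \cite{MR1919155}, and then show that the relative Chow groups of the self-products are generated by tautological classes whose restrictions to a very general fiber are injective modulo numerical equivalence. Thus it suffices to work with $\rS^{n/\rM}$, with $n\leq 3$ in $(i)$, $n\leq 5$ in $(ii)$, and $n\leq 2$ in $(iii)$.

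The principal new ingredient will be the stratified projective bundle (SPB) formalism of \S\ref{subsect:SPB}. In each case the universal K3 surface admits an explicit ambient realization whose Chow groups are under control: in $(i)$ I would realize $\rS$ as a divisor in a relative $\P^1$-bundle over the parameter space of plane sextics; in $(ii)$ as a divisor in the trivial $\P^3$-bundle over the parameter space of smooth quartic surfaces; and in $(iii)$ via Mukai's model as a smooth complete intersection inside a homogeneous variety (a Grassmannian, orthogonal Grassmannian, or Lagrangian Grassmannian), whose Chow ring is generated by Schubert classes. Taking the relative $n$-fold fiber product of such an ambient realization and applying SPB should produce a finite list of tautological generators for $\CH^\ast(\rS^{n/\rM})_\Q$: pullbacks of the ambient classes from each factor, together with the partial (big) diagonals.

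It then remains to verify that, on a very general fiber $S^n$, the induced map from this tautological subring to numerical equivalence classes is injective. This reduces to a finite-dimensional linear-algebra check exploiting the Beauville--Voisin relations in $\CH^\ast(S)_\Q$---most importantly $h^2=(2g-2)\,\mathfrak{o}_S$ and $c_2(S)=24\,\mathfrak{o}_S$---together with the expected intersection formulas among the $h_i=\pr_i^*h$, $\mathfrak{o}_i=\pr_i^*\mathfrak{o}_S$, and partial diagonals $\Delta_I\subset S^n$.

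The main difficulty I expect will lie in the SPB step: as $n$ grows, tracking generators through projective-bundle formulas, excision, and restriction becomes combinatorially heavy, and the tautological ring of the generic fiber acquires more and more classes that must still be shown to inject into numerical equivalence. This is what I anticipate caps $(ii)$ at $\sum r_i\leq 5$ (where the ambient $(\P^3)^n$ still has a manageable tautological ring), and $(iii)$ at $n=2$ (where, for Mukai models, the pairwise Schubert intersections already fill up the relevant codimensional pieces). Pushing beyond these bounds would presumably require either a more clever model of $\rS\to\rM$ or new relations on $\CH^\ast(S^n)_\Q$ beyond those coming from Beauville--Voisin.
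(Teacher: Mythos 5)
Your overall architecture matches the paper's: reduce Hilbert powers to self-products via de Cataldo--Migliorini, apply the stratified projective bundle formalism to the evaluation map $\rS^{n/B}\to G^n$ for a Mukai-type ambient realization $(G,E)$, and finish with the injectivity of the cycle class map on the tautological ring of $S^n$ (Voisin's \cite[Proposition 2.2]{MR2435839}, which in fact works for all $n\le 43$, so this step is never the bottleneck). But there is a genuine gap in your SPB step: you never identify, let alone verify, the positivity condition that makes it work. For the restriction image to be contained in the tautological ring of $S_b^n$, the paper requires the condition $(\star_n)$: for any $n$ \emph{distinct} points $x_1,\dots,x_n$ of $G$, the evaluation map $H^0(G,E)\to\bigoplus_i E_{x_i}$ is surjective. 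This guarantees that the stratum of the evaluation map lying over $n$ distinct points has the expected codimension, so that the excess intersection formula pushes its contribution back into the lower strata (partial diagonals) and the ambient classes. When $(\star_n)$ fails, the off-diagonal strata have excess dimension and contribute genuinely new universal cycles that need not lie in the tautological ring --- this is exactly what happens for $6$ collinear points on a quartic surface, for $3$ points on a ruling of $\P(1,1,1,3)$ in genus $2$, and for $3$ collinear points of the Grassmannian in genus $6$ and $8$. So the bounds $\sum r_i\le 5$, $n\le 3$, $n\le 2$ in the statement are not, as you suggest, caused by combinatorial growth of the tautological ring or by the injectivity check; they are forced by the failure of $(\star_{n+1})$ for the relevant Mukai models. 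The actual content of the proof is the case-by-case verification of $(\star_n)$: for quartics, $(\star_5)$ via $5$ collinear points and $5$ points on a conic (using a Koszul resolution); for genus $2$, $(\star_3)$ via the ruling of the cone over the Veronese surface; for genera $6$--$10$, $(\star_2)$ from very ampleness of the relevant homogeneous line bundle. None of this appears in your proposal.

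A secondary issue: genus $12$ does not fit your blanket description of Mukai models as complete intersections in homogeneous spaces. The paper handles it by a separate variant, realizing the K3 as an anticanonical section of the prime Fano threefold $X_{22}$, using that $X_{22}$ has trivial Chow groups (so a Chow--K\"unneth formula holds for its powers) and that $-K_X$ is very ample to get $(\star_2)$. Your genus $2$ ambient model (a relative $\P^1$-bundle over $\P^2$, i.e.\ essentially the weighted projective space $\P(1,1,1,3)$) is in the right spirit, but again the work is in checking surjectivity of the evaluation map at triples of points, including the degenerate configuration on a ruling.
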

	The proof will be given in \S \ref{subsect:beyond}, where these results are just
	special cases of the more general but more technical Theorem \ref{thm:beyond}.
	See also Remark \ref{rmk:limit} which explains that the ranges in
	Theorems~\ref{thm:mainHilb2} and \ref{thm:mainHilbmore} above are, at least most
	of them, already at the limit of our method.
	
	As immediate consequences, we obtain some partial confirmation of Voisin's
	refinement of the Beauville--Voisin conjecture involving coisotropic
	subvarieties (Conjecture \ref{conj:BV+})\,:
	
	\begin{cor}\label{cor:BV+} Let $S$ be a general K3 surface of genus $g\leq 10$
		or 12, and let $X$ be the Hilbert square $X=\Hilb^{2}(S)$. Let $R^\ast(X)\subset
		\CH^\ast(X)_{\Q}$ denote the 
		$\Q$-subalgebra generated by the polarization class $h$, the Chern classes
		$c_{i}$, and the Lagrangian surface $T\subset X$ constructed in
		\cite[Proposition 4]{IM}. Then $R^\ast(X)$ injects into cohomology by the cycle
		class map.
	\end{cor}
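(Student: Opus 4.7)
The plan is to deduce the corollary from the Franchetta property for the relative Hilbert square $\Hilb^{2}_{\rM}\rS\to\rM$, which by Theorems~\ref{thm:mainHilb2} and~\ref{thm:mainHilbmore} holds for every genus $g\in\{2,3,4,5,6,7,8,9,10,12\}$. The guiding idea is that each generator of $R^\ast(X)$ is the restriction to the fiber $X=X_{b}$ of a cycle defined globally on the total space, so that $R^\ast(X)$ is the image under restriction to $b$ of an algebra of universal cycles.

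First I would realize the three families of generators in the universal setting. The polarization $h$ is the restriction of a relative polarization on $\Hilb^{2}_{\rM}\rS$ induced from the universal polarization on $\rS\to\rM$. The Chern classes $c_{i}(X)$ are fiberwise restrictions of the Chern classes of the relative tangent sheaf of $\Hilb^{2}_{\rM}\rS/\rM$. For the Iliev--Manivel Lagrangian surface $T\subset X$ of \cite[Proposition 4]{IM}, I would verify that the construction, being canonically determined by $(S,h)$, lifts to a codimension-$2$ algebraic cycle $\mathcal{T}\in\CH^{2}(\Hilb^{2}_{\rM^{\circ}}\rS)_{\Q}$ over a Zariski-dense open substack $\rM^{\circ}\subset\rM$, with $\mathcal{T}|_{X_{b}}=T$ for general $b\in\rM^{\circ}$.

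Granting the universal lift $\mathcal{T}$, suppose $\alpha\in R^\ast(X)$ is cohomologically trivial. Then $\alpha=\beta|_{X_{b}}$ for some polynomial $\beta\in\CH^\ast(\Hilb^{2}_{\rM^{\circ}}\rS)_{\Q}$ in the universal lifts of $h$, the $c_{i}$ and $\mathcal{T}$. The cohomology class $[\beta|_{X_{t}}]$ varies as a flat section of the local system $R^\ast\pi_\ast\Q$ over $\rM^{\circ}$, where $\pi$ denotes the universal family; since $\rM^{\circ}$ is irreducible and this section vanishes at the general point $b$, it vanishes at every $t\in\rM^{\circ}$. Hence $\beta$ is fiberwise homologically trivial, and the Franchetta property provided by Theorems~\ref{thm:mainHilb2} and~\ref{thm:mainHilbmore} yields $\beta|_{X_{t}}=0$ in $\CH^\ast(X_{t})_{\Q}$ for every $t$, so in particular $\alpha=0$.

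The main obstacle is the relativization of the Iliev--Manivel Lagrangian surface: one must exhibit $\mathcal{T}$ as a genuine codimension-$2$ cycle on the total space that recovers $T$ fiberwise, uniformly over a dense open of the moduli, and this may require inspecting the construction of \cite[Proposition 4]{IM} case-by-case for each of the relevant genera. Once $\mathcal{T}$ is in place, the injectivity of the cycle class map on $R^\ast(X)$ is a formal consequence of the Franchetta property combined with the local-constancy argument for the cohomology classes of universal cycles.
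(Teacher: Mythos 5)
Your proposal is correct and follows essentially the same route as the paper: the paper's proof observes that $h$, the $c_i$ and the Lagrangian surface $T$ are all universally defined over the universal family, and then invokes Proposition~\ref{prop:ChernBV} together with Theorems~\ref{thm:mainHilb2} and~\ref{thm:mainHilbmore}, which is exactly the argument you spell out (you merely inline the content of Proposition~\ref{prop:ChernBV} rather than citing it). The one point you flag as the main obstacle --- relativizing the Iliev--Manivel surface to a cycle $\mathcal{T}$ over a dense open of the moduli --- is likewise asserted without detail in the paper, so your treatment matches its level of rigor.
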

	
	\begin{cor}\label{cor:BV++} Let $S\subset\P^3$ be a quartic K3 surface, and let
		$X=\Hilb^{5}S$, $\Hilb^{2}S\times \Hilb^{2}S\times S$, $\Hilb^{2}S\times S^{3}$
		or $\Hilb^{2}S\times \Hilb^{3}S$. Let $R^\ast(X)\subset \CH^\ast(X)_{\Q}$ denote
		the 
		$\Q$-subalgebra generated by the polarization class $h$, the Chern classes
		$c_{i}$, the coisotropic subvarieties $E_\mu$ of \cite[4.1 item 1)]{MR3524175},
		the Lagrangian surface $T\subset \Hilb^{2}S$ constructed in \cite[Proposition
		4]{IM}, and the surface of bitangents $U\subset \Hilb^{2}S$. 
		Then $R^\ast(X)$ injects into cohomology by the cycle class map.
	\end{cor}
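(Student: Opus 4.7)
The plan is to reduce the statement to the Franchetta property for relative Hilbert powers of quartic K3 surfaces established in Theorem~\ref{thm:mainHilbmore}(ii). In each of the four cases---$X=\Hilb^5 S$, $\Hilb^2 S\times\Hilb^2 S\times S$, $\Hilb^2 S\times S^3$, and $\Hilb^2 S\times\Hilb^3 S$---the total index $r_1+\cdots+r_m$ equals $5$, so the corresponding relative family $\rX\to \rM$ over the moduli stack of quartic K3 surfaces satisfies the Franchetta property: any cycle on $\rX$ whose fiberwise cohomology class vanishes restricts to zero in $\CH^\ast(X_b)_\Q$ for every $b\in \rM$.

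The heart of the proof is then to check that each generator of $R^\ast(X)$ is the restriction to $X_b$ of a cycle defined on the relative total space $\rX$. Since the image of the ring homomorphism $\CH^\ast(\rX)_\Q\to \CH^\ast(X_b)_\Q$ is a subring, this property propagates to every polynomial in the generators. The polarization $h$ and the Chern classes $c_i$ extend tautologically, being pulled back from relative classes on $\rX$. The coisotropic subvarieties $E_\mu$ of~\cite[4.1 item 1)]{MR3524175} are defined via nested-partition incidence conditions on the Hilbert scheme and are manifestly universal, gluing into relative cycles on the various $\Hilb^{r}_{\rM}\rS$. The Iliev--Manivel Lagrangian $T\subset \Hilb^2 S$ and the bitangent surface $U\subset \Hilb^2 S$ are both defined purely from the $\P^3$-embedding provided by the polarization, and hence extend to relative codimension-$2$ cycles on $\Hilb^2_{\rM}\rS$; in the three product cases, one then pulls these extensions back along the relative projection onto the $\Hilb^2$ factor. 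Combined with the standard monodromy remark (also noted after Conjecture~\ref{conj:FranchettaHK}) that homological triviality on one fiber propagates to all fibers, the Franchetta property then yields the desired injectivity of $R^\ast(X)$ into cohomology.

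The main obstacle is the construction of the relative cycles extending $T$ and $U$ with the correct fiberwise specialization. For $U$ this should be straightforward: the bitangent locus admits a clean relative description in terms of the universal quartic in $\P^3\times\rM$ together with the incidence scheme of tangent lines, which is flat over $\rM$. For $T$, one must verify that the Iliev--Manivel construction assembles into a flat family over $\rM$, so that its relative version induces a cycle class on $\Hilb^2_{\rM}\rS$ specializing fiberwise to $T\subset \Hilb^2(S_b)$. Once these relative incarnations are in place, the proof proceeds exactly parallel to that of Corollary~\ref{cor:BV+}, with the additional bookkeeping that more factors are involved and more generators must be lifted.
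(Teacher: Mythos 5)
Your proposal is correct and follows essentially the same route as the paper: the paper's proof of Corollary~\ref{cor:BV++} consists precisely of observing that $E_\mu$, $T$ and $U$ (along with $h$ and the $c_i$) are universally defined over suitable relative powers of the universal family, and then invoking Proposition~\ref{prop:ChernBV} together with the Franchetta property from Theorem~\ref{thm:mainHilbmore}$(ii)$ (which covers all four cases since $r_1+\cdots+r_m=5$). Your discussion of the relative incarnations of $T$ and $U$ is in fact more detailed than what the paper records.
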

	
	These two corollaries are proven in \S\ref{subsect:beyond} and also partially
	extended to products of Hilbert schemes in Corollary \ref{cor:BV+r}. A similar
	application to a $19$-dimensional family of double EPW sextics is given in \S
	\ref{epw}.
	
	Another consequence, whose proof as well as the background is in \S
	\ref{subsect:AppBloch}, concerns the Bloch conjecture for the anti-symplectic
	involution on Hilbert squares of quartic surfaces constructed by Beauville
	\cite{Kata}\,:
	
	\begin{cor}\label{cor:blochkata} Let $X=\Hilb^{2}S$ be the Hilbert square of a
		quartic K3 surface $S$, and let $\iota\colon X\to X$ be the anti-symplectic
		involution of Beauville \cite{Kata}. Then
		\[ \begin{split}  \iota^\ast=-\id\colon\ \ \ \CH^i(X)_{(2)}\ &\to\
		\CH^i(X)_{(2)}\ \ \ (i=2,4)\ ,\\
		\iota^\ast=\id\colon\ \ \ \CH^4(X)_{(j)}\ &\to\
		\CH^4(X)_{(j)}\ \ \ (j=0,4)\ .\\
		\end{split}\]
		(Here, the notation $\CH^\ast(X)_{(\ast)}$ refers to the Fourier
		decomposition of $\CH^\ast(X)_{\Q}$ constructed by Shen--Vial \cite{SV}.)
	\end{cor}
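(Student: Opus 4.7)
The plan is to reduce the Chow-theoretic identities to cohomological ones via the Franchetta property for the relative self-product $\Hilb^2_{\rM}\rS\times_{\rM}\Hilb^2_{\rM}\rS$, which is covered by Theorem \ref{thm:mainHilbmore}(ii) (the case $r_1=r_2=2$, since $2+2=4\leq 5$).

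First I would observe that Beauville's anti-symplectic involution is defined by a universal residual-intersection construction on quartic K3 surfaces $S\subset\P^3$: to a general pair of points of $S$ it assigns the residual pair on the line they span. This construction extends to a relative involution $\iota_{\rM}$ on the universal Hilbert square over the moduli stack $\rM$ of smooth quartic K3 surfaces, so the graph $\Gamma_\iota\in \CH^4(X\times X)$ is the fiberwise restriction of a relative tautological cycle $\Gamma_{\iota_{\rM}}$. Similarly, for hyper-K\"ahler varieties of $K3^{[2]}$-type, the Shen--Vial projectors $\pi^i_{(j)}\in\CH^4(X\times X)$ onto the Fourier pieces of $X=\Hilb^2 S$ are given in \cite{SV} by explicit polynomial expressions in the diagonal $\Delta_X$, Chern classes, the polarization, and the ``incidence class'' $L\in\CH^2(X\times X)$; all of these extend to the universal family, so each $\pi^i_{(j)}$ lifts to a relative correspondence on $\Hilb^2_{\rM}\rS\times_{\rM}\Hilb^2_{\rM}\rS$.

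The identities to prove then translate into the vanishing in $\CH^4(X\times X)$ of the cycles
\[
(\Gamma_\iota+\Delta_X)\circ \pi^i_{(2)} \ \ (i=2,4), \qquad (\Gamma_\iota-\Delta_X)\circ \pi^4_{(j)} \ \ (j=0,4),
\]
each of which is the fiberwise restriction of a relative tautological cycle. By the Franchetta property it therefore suffices to verify the corresponding vanishings in cohomology.

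The cohomological verification reduces to tracking $\iota^\ast$ through the Shen--Vial decomposition: since $\iota$ is anti-symplectic, $\iota^\ast$ acts by $-1$ on $H^{2,0}(X)$, hence by $-\id$ on the transcendental Fourier pieces $H^\ast(X)_{(2)}$ (which carry the contribution of $H^{2,0}\oplus H^{0,2}$ once); it acts by $+\id$ on the algebraic piece $H^\ast(X)_{(0)}$, and by $(-1)^2=+\id$ on $H^\ast(X)_{(4)}$, which is governed by $\Sym^2 H^{2,0}\oplus\Sym^2 H^{0,2}$. The main technical obstacle I anticipate is pinning down explicit tautological representatives for the Shen--Vial projectors over the universal family of quartic K3 surfaces, so that their compositions with $\Gamma_\iota\pm\Delta_X$ unambiguously fall within the scope of Franchetta; once this universal bookkeeping is in place, Theorem \ref{thm:mainHilbmore}(ii) closes the argument.
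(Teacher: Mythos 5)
Your overall strategy---lift everything to the universal family of quartic K3 surfaces and invoke the Franchetta property for $\Hilb^{2}_{B}\rS\times_{B}\Hilb^{2}_{B}\rS$ from Theorem \ref{thm:mainHilbmore}$(ii)$---is exactly the paper's, and your observations that $\Gamma_\iota$ and the Shen--Vial projectors are universally defined are correct and are used in the paper. But the central step of your reduction fails: the correspondences $(\Gamma_\iota+\Delta_X)\circ\pi^{2}_{X}$ and $(\Gamma_\iota+\Delta_X)\circ\pi^{6}_{X}$ (which compute $\iota^\ast+\id$ on $\CH^2(X)_{(2)}$ and $\CH^4(X)_{(2)}$ respectively) are \emph{not} fiberwise homologically trivial, so the Franchetta property cannot be applied to them directly. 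The cohomological realization of $\pi^{2}_{X}$ is the projector onto \emph{all} of $H^2(X,\Q)$, not onto its transcendental part; since $\iota$ is an automorphism, $\iota^\ast$ preserves ample classes and acts on $\mathrm{NS}(X)_\Q=\langle h,\delta\rangle$ by an isometry that is certainly not $-\id$, so $\iota^\ast+\id\neq 0$ on $H^2(X,\Q)$. Your sentence ``$\iota^\ast$ acts by $-\id$ on the transcendental Fourier pieces $H^\ast(X)_{(2)}$'' conflates the Chow-theoretic graded pieces (which are homologically trivial) with a cohomological projector onto the transcendental lattice, and no such projector appears among the universally defined $\pi^k_X$. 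A similar problem occurs for $(\Gamma_\iota-\Delta_X)\circ\pi^4_X$, since $\iota^\ast$ is not $+\id$ on classes of type $\sigma\cdot h$ in $H^4$. (Only the $\pi^8$-composition, i.e.\ the degree-$0$ statement on $\CH^4(X)_{(0)}$, is genuinely homologically trivial on the nose.)

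The paper's proof supplies precisely the two ingredients you are missing. First, it applies Franchetta to the compositions $\pi^i_{\rX}\circ\Gamma_\iota\circ\pi^j_{\rX}$ for $i\neq j$ --- these \emph{are} fiberwise homologically trivial --- to conclude that $\iota_b$ preserves the bigrading; this is what later allows the $\CH^4(X)_{(2)}$ statement to be deduced from the $\CH^2(X)_{(2)}$ one by transposition. Second, to handle the non-vanishing cohomology class of $(\Delta_{\rX}+\Gamma_\iota)\circ\pi^2_{\rX}$, it uses a Bloch--Srinivas-type argument: anti-symplecticity implies this class is represented fiberwise by a cycle $\gamma_b$ supported on $D_b\times D_b$ for a divisor $D_b$; a Hilbert-scheme spreading argument produces a relative such cycle $\gamma$, Franchetta is applied to the \emph{difference} $(\Delta_{\rX}+\Gamma_\iota)\circ\pi^2_{\rX}-\gamma$, and one concludes because correspondences supported on $(\text{divisor})\times(\text{divisor})$ act as zero on $\CH^2(X_b)_{(2)}$. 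To repair your argument you would need to add both of these steps; as written, the claimed cohomological vanishings are false and the proof does not go through.
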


	\subsection{The Beauville--Donagi family}
	
	For the universal family of Fano varieties of lines of cubic fourfolds, which
	form a locally complete family of projective hyper-K\"ahler fourfolds of
	K3$^{[2]}$-type (\cite{MR818549}), we have the following slightly stronger
	result than predicted by Conjecture \ref{conj:FranchettaHK}\,:
	\begin{thm}\label{thm:mainBD}
		Let $\rC$ be the moduli stack of smooth cubic fourfolds, $\rX\to \rC$ the
		universal family and $\rF\to \rC$ be the universal family of Fano varieties of
		lines of the fibers of  $\rX/\rC$. 
		Then for any $i\in \N$, any $z\in \CH^{i}(\rF)_{\Q}$ and any $b\in \rC$, the
		restriction of $z$ to the fiber $F_{b}$ is numerically trivial if and only if it
		is (rationally equivalent to) zero.
		\footnote{In fact, we show that the restriction of $\CH^*(\rF)_{\Q}$ to
			$\CH^*(F_b)_{\Q}$ is the \emph{tautological} subring, which is defined as the
			$\Q$-subalgebra generated by the Pl\"ucker polarization of $F_b$ and by the
			Chern classes of $F_b$, see Remark \ref{rmk:TautologicalRing}.} 
	\end{thm}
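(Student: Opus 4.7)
The plan is to exploit the fact that the universal Fano variety arises globally as an open subset of a projective bundle over the Grassmannian. Let $B \subset \P^{55} = |\rO_{\P^5}(3)|$ be the open subset parametrizing smooth cubic fourfolds; its complement is the discriminant hypersurface $\Delta$ of some positive degree $d$. The stack $\rC$ is the quotient of $B$ by $\GL_6$, and the statement on the stack reduces to the analogous statement for the scheme-theoretic universal family $\rF_B \to B$ (either by working with equivariant Chow, or by adding level structure to replace $\rC$ with a scheme-theoretic base, as mentioned in the introduction).

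Let $\tilde\rF \subset \Gr(2,6) \times \P^{55}$ denote the incidence variety of pairs $([\ell], [Y])$ with $\ell \subset Y$. For each $[\ell] \in \Gr(2,6)$, the locus of cubics containing $\ell$ is a codimension-$4$ linear subspace of $\P^{55}$, so $\tilde\rF \to \Gr(2,6)$ is a projective subbundle of the trivial bundle $\Gr(2,6) \times \P^{55}$ of relative dimension $51$. The projective bundle formula exhibits $\CH^*(\tilde\rF)_\Q$ as a free $\CH^*(\Gr(2,6))_\Q$-module on $1, \zeta, \ldots, \zeta^{51}$, where $\zeta$ is the tautological hyperplane class; since $\tilde\rF$ is a projective subbundle of the \emph{trivial} $\P^{55}$-bundle, this $\zeta$ coincides with the pullback of the hyperplane class from $\P^{55}$. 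Now $\rF_B$ is the open subset of $\tilde\rF$ obtained by removing the preimage of $\Delta$, whose class in $\CH^1(\tilde\rF)_\Q$ equals $d\cdot\zeta$. By the localization sequence together with $\Q$-coefficients, $\zeta$ vanishes in $\CH^*(\rF_B)_\Q$, so the restriction $\CH^*(\Gr(2,6))_\Q \to \CH^*(\rF_B)_\Q$ is surjective.

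Consequently, for every $b \in B$, the image of $\CH^*(\rF_B)_\Q \to \CH^*(F_b)_\Q$ is contained in the subring generated by the restrictions of $c_1(\rE^*)$ and $c_2(\rE^*)$ to $F_b$, where $\rE$ is the tautological rank-$2$ bundle on $\Gr(2,6)$. The first generator is the Pl\"ucker polarization $g$; the normal bundle sequence $0 \to T_{F_b} \to T_{\Gr(2,6)}|_{F_b} \to \Sym^3 \rE^*|_{F_b} \to 0$ expresses the Chern classes of $F_b$ as polynomials in $g$ and $c_2(\rE^*|_{F_b})$, and conversely (since $c_2(F_b)$ is of the form $\alpha g^2 + \beta c_2(\rE^*|_{F_b})$ with $\beta \neq 0$) the subring so obtained is exactly the tautological ring $R^*(F_b) \subset \CH^*(F_b)_\Q$ of the footnote.

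It remains to show that $R^*(F_b)$ injects into $H^*(F_b, \Q)$. In codimensions $0, 1, 2, 3$ the spanning monomials in $g$ and $c_2(\rE^*|_{F_b})$ are visibly linearly independent in cohomology, as checked by computing intersection numbers inside $\Gr(2,6)$. The content lies in codimension $4$: since $H^8(F_b, \Q) = \Q$, one must show that the three classes $g^4$, $g^2 c_2(\rE^*|_{F_b})$, $c_2(\rE^*|_{F_b})^2$ are proportional to a common zero-cycle class in $\CH^4(F_b)_\Q$, with ratios matching those in cohomology. This is the Beauville--Voisin-type property for zero-cycles on Fano varieties of lines on cubic fourfolds established by Voisin, and is the main obstacle of the argument. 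Finally, since $R^*(F_b)$ is a finite-dimensional algebraic subspace, numerical and homological equivalence agree on it, yielding the equivalence with numerical triviality as stated.
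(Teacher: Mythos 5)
Your overall architecture coincides with the paper's: realize the universal Fano variety as (an open piece of) a projective bundle over $\Gr(2,6)$, deduce that the fiberwise image of the restriction map is the subring generated by $g=c_1(\rE^*|_{F_b})$ and $c=c_2(\rE^*|_{F_b})$ (equivalently, the tautological ring of Remark~\ref{rmk:TautologicalRing}), and then invoke Voisin's results to obtain injectivity of the cycle class map on that subring. Your variant of the first step --- killing $\zeta$ over the smooth locus by observing that the pullback of the discriminant has class $d\zeta$ --- is correct and is a legitimate substitute for the paper's argument, which instead writes the relative hyperplane class as $\pi^*h$ and restricts to a fiber. The genuine problem is in your codimension-$3$ step. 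The spanning monomials of $R^3(F_b)$ are $g^3$ and $gc$, and these are \emph{not} linearly independent in $H^6(F_b,\Q)$: for a very general cubic fourfold $Hdg^6(F_b)$ is one-dimensional, and the relation $12\,gc=5\,g^3$ holds in $H^6(F_b,\Q)$ for every smooth cubic fourfold (intersect with $g$ and use $g^4=108$, $g^2c=45$). Hence injectivity of the cycle class map on $R^3(F_b)$ is not a formal intersection-number computation; it requires the same relation $12\,gc=5\,g^3$ to hold already in $\CH^3(F_b)_\Q$, which is a nontrivial theorem of Voisin (\cite[Lemma~3.5]{MR2435839}, quoted in Remark~\ref{rmk:TautologicalRing}). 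So the ``main obstacle'' is not confined to codimension $4$: as written, your degree-$3$ assertion is false, and must be replaced by this additional input (or one cites \cite[Theorem~1.4(ii)]{MR2435839} for injectivity on the whole Beauville--Voisin ring at once, as the paper does).

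Your closing sentence is also not a proof: the fact that $R^*(F_b)$ is a finite-dimensional subspace spanned by algebraic classes does not imply that numerical and homological equivalence agree on it --- no such general principle exists. To pass from numerical to homological triviality you need either the standard conjectures for $F_b$, which is what the paper invokes via Charles--Markman \cite{MR3040747}, or a direct verification that the intersection pairing between the images of $R^i(F_b)$ and $R^{4-i}(F_b)$ in cohomology is nondegenerate, using the explicit (deformation-invariant) intersection numbers of the tautological monomials. Either route closes this gap, but some such argument has to be supplied.
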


	In order to study the next case (Theorem \ref{thm:mainLLSvS}), we also prove the
	following analogous result on the relative square of the universal family of
	Fano varieties of lines\,:
	\begin{thm}\label{thm:mainBD2}
		Notation is as in Theorem \ref{thm:mainBD}. Then for $z\in
		\CH^{i}(\rF\times_{\rC}\rF)_{\Q}$ and any $b\in \rC$, the restriction of $z$ to
		the fiber $F_{b}\times F_{b}$ is numerically trivial if and only if it is
		(rationally equivalent to) zero.\footnote{We actually show that the restriction
			of $\CH^{\ast}(\rF\times_{\rC}\rF)_{\Q}$ to $\CH^\ast(F_{b}\times F_{b})_{\Q}$
			is the \emph{tautological} subring, which is defined as the $\Q$-subalgebra
			generated by the tautological subrings of the two factors together with the
			classes of the diagonal and the incidence subvariety\,; see Proposition
			\ref{prop:AllTaut}.}
	\end{thm}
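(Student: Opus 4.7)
The plan follows the strategy of Theorem \ref{thm:mainBD}, enriched by the observation that on the relative square one must allow two extra ``tautological'' generators beyond the pullbacks from each factor. Define $R^\ast(F \times F) \subset \CH^\ast(F \times F)_\Q$ to be the $\Q$-subalgebra generated by $p_1^\ast R^\ast(F)$, $p_2^\ast R^\ast(F)$, the diagonal class $[\Delta_F]$, and the incidence class $[I]$, where $I=\{(\ell,\ell')\in F\times F : \ell\cap\ell'\neq\vide\}$. The theorem then reduces to two assertions: \emph{(a)} the image of the restriction $\CH^\ast(\rF\times_\rC\rF)_\Q \to \CH^\ast(F_b\times F_b)_\Q$ lies in $R^\ast(F_b \times F_b)$ for every $b\in\rC$; and \emph{(b)} the cycle class map is injective on $R^\ast(F_b \times F_b)$.

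For \emph{(a)}, I would spread cycles and work, after an \'etale base change, over an open subset $B$ of $\P H^0(\P^5,\sO(3))$ (or a suitable $\PGL_6$-torsor thereof), where the base has trivial rational Chow groups. The family $\rF\times_B \rF$ embeds into the trivial double Grassmannian bundle $G(2,6)\times G(2,6)\times B$. By the projective bundle formula combined with $\CH^\ast(B)_\Q\isom \Q$, every cycle pulled back from the ambient bundle restricts fiberwise to an element of the subalgebra generated by $p_1^\ast R^\ast(F)+p_2^\ast R^\ast(F)$. Cycles not inherited from the ambient space are handled by stratifying $\rF\times_B \rF$ along the relative diagonal $\Delta_{\rF/B}$ and the relative incidence $\rI$. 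Two geometric facts are crucial: $\Delta_{\rF/B}\isom \rF$, so Theorem \ref{thm:mainBD} controls cycles supported there; and $\rI$ admits a projective-bundle-like description over the universal cubic $\rX\to B$ (a pair of intersecting lines is determined, up to the Hilbert scheme of their plane and a residual-line correction, by its intersection point inside $X$), so its Chow groups again reduce to tautological polynomials via the argument of Theorem~\ref{thm:mainBD}. Excision then yields the desired inclusion, with the images of the two strata contributing precisely the classes $[\Delta_F]$ and $[I]$.

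For \emph{(b)}, one invokes the Shen--Vial multiplicative Chow--K\"unneth decomposition of $F$ and its induced bigrading on $F\times F$. The generators of $R^\ast(F \times F)$ lie in explicit bigraded pieces; using that $R^\ast(F)$ injects into cohomology by Theorem \ref{thm:mainBD}, together with the computation of intersection numbers involving $[\Delta_F]$ and $[I]$ in each codimension, one checks degree by degree that no nonzero class of $R^\ast(F\times F)$ vanishes cohomologically. The main obstacle will be part~\emph{(a)}: producing a workable description of $\rI\to \rX$ and carrying out the localization sequence with enough precision to identify the defect as exactly the classes $[\Delta_F]$ and $[I]$, rather than some larger family of correspondences. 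This is presumably where the new tautological relation on $F\times F$ proved in the appendix enters, by cutting the a priori infinite-dimensional polynomial algebra in the generators of $R^\ast(F\times F)$ down to a finite-dimensional quotient that can be compared directly with cohomology.
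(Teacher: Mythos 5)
Your proposal matches the paper's proof in both structure and substance: your part \emph{(a)} is Proposition \ref{prop:RestIm} combined with Proposition \ref{prop:AllTaut} (a stratified projective bundle over $G\times G$ along the incidence subvariety and the diagonal, with the incidence stratum resolved through the intersection point in $\P^{5}$ so that its contribution is exactly the classes $\Gamma_{h^{k}}$, i.e.\ $[I]$ plus polynomials in $g_{i},c_{i}$, and the diagonal stratum controlled by Theorem \ref{thm:mainBD}), while your part \emph{(b)} is Proposition \ref{prop:InjTaut}, where the new appendix relation $6\Delta_{*}(g)+g_{1}g_{2}(g_{1}+g_{2})\cdot I=Q(g_{1},g_{2},c_{1},c_{2})$ is indeed precisely what is needed to cut the generator count in degrees $5$ and $6$ down to $\dim Hdg^{2i}$. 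The only caveats are that the paper's injectivity argument is a direct degree-by-degree generator count against the Hodge classes of a general $F\times F$ using explicit relations from Voisin and Shen--Vial rather than the multiplicative Chow--K\"unneth bigrading (which would be circular, since Proposition \ref{prop:CK-F} and the multiplicativity statement of Proposition \ref{P:multF} are themselves deduced from Theorem \ref{thm:mainBD2}), and that the passage from numerical to homological equivalence rests on the standard conjectures for $F_{b}$ due to Charles--Markman.
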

	
	The proof of Theorem \ref{thm:mainBD} (resp. Theorem \ref{thm:mainBD2}) consists
	of two steps. First we show that cycles that belong to the image of the 
	restriction map  $\CH^{i}(\rF)_{\Q} \to \CH^i(F_b)_\Q$
	(\emph{resp.}~$\CH^{i}(\rF\times_{\rC}\rF)_{\Q} \to \CH^i(F_b\times F_b)_\Q$)
	are tautological in the sense of Remark \ref{rmk:TautologicalRing}
	(\emph{resp.}~Definition \ref{def:tautFxF}). Second we show that relations among
	tautological cycles modulo numerical equivalence in fact hold modulo rational
	equivalence.  More precisely, we determine completely in terms of generators and
	relations the rings of tautological cycles for $F_b$ and $F_b\times F_b$. In the
	case of $F_b\times F_b$, all relations but one had been established in
	\cite{MR2435839} and \cite{SV}. The remaining relation is established in a joint
	appendix with Mingmin Shen, where we also draw some consequences concerning the
	multiplicative properties of the Chow motive of $F_b$.

	\subsection{The Lehn--Lehn--Sorger--van Straten family}
	
	Similarly to the Fano varieties of lines of cubic fourfolds,
	Lehn--Lehn--Sorger--van Straten (LLSvS) consider in \cite{LLSvS} the twisted
	cubic curves on a cubic fourfold not containing a plane and show that the base
	of the maximal rationally connected (MRC) quotient of the moduli space of such
	curves is a hyper-K\"ahler eightfold. Later Addington and M. Lehn show in
	\cite{AddingtonLehn} that this hyper-K\"ahler eightfold is of
	K3$^{[4]}$-deformation type (\emph{cf.} also \cite{CLehn}). For the universal
	family of LLSvS hyper-K\"ahler eightfolds, we have the following result, which
	confirms the $0$-cycle and codimension-$2$ cases of the generalized Franchetta
	conjecture.
	
	\begin{thm}\label{thm:mainLLSvS}
		Let $\rC^{\circ}$ be the moduli stack of smooth cubic fourfolds not containing a
		plane and let $\rZ\to \rC^{\circ}$ be the universal family of LLSvS
		hyper-K\"ahler eightfolds (\cite{LLSvS}). Then
		\begin{enumerate}[(i)]
			\item  for any $b\in \rC^{\circ }$ and for any $\gamma\in \CH^{8}(\rZ)$ which is
			fiber-wise of degree 0, the restriction of $\gamma$ to the fiber $Z_{b}$ is
			(rationally equivalent to) zero.
			\item  for any $b\in \rC^{\circ }$ and for any $\gamma\in \CH^{2}(\rZ)_{\Q}$,
			its restriction to the fiber $Z_{b}$ is zero if and only if its cohomology class
			vanishes.
		\end{enumerate}
	\end{thm}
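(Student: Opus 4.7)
The plan is to reduce Theorem~\ref{thm:mainLLSvS} to the Franchetta property for the relative square of the Fano variety of lines, which was established in Theorem~\ref{thm:mainBD2}. The bridge is supplied by the LLSvS construction itself: the eightfold $Z$ arises as an MRC quotient of the moduli space $M_{3}(Y)$ of generalized twisted cubics on the cubic fourfold $Y$, and the incidence between a twisted cubic $C\subset Y$ and the lines on $Y$ it meets (or, dually, the residuation on the cubic surface cut out by the span of a generic pair of lines on $Y$) yields a natural relative correspondence
\[
\Gamma \in \CH^{\ast}\bigl(\rF\times_{\rC^{\circ}}\rF\times_{\rC^{\circ}}\rZ\bigr)_{\Q}.
\]
On each fiber, the induced map $\Gamma_{\ast}\colon \CH_{0}(F_b\times F_b)_{\Q}\to \CH_{0}(Z_b)_{\Q}$ is surjective because $M_{3}(Y)\to Z$ is dominant and a generic pair of lines on $Y$ determines a twisted cubic via residuation.

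For part $(i)$, I start from $\gamma \in \CH^{8}(\rZ)$ of fiberwise degree zero. Using the surjectivity above together with Voisin's spreading technique, one produces a universal cycle $\tilde\alpha \in \CH^{\ast}(\rF\times_{\rC^{\circ}}\rF)_{\Q}$ whose fiberwise pushforward by $\Gamma$ equals $\gamma$ modulo cycles that are universally rationally trivial, and such that $\tilde\alpha|_{F_b\times F_b}$ is fiberwise homologically trivial (here the result of Bergeron--Li on the cohomological Franchetta conjecture for $\rZ\to \rC^{\circ}$ is used to propagate cohomological triviality through the family and then transfer it to $\rF\times_{\rC^{\circ}}\rF$). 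Theorem~\ref{thm:mainBD2} then forces $\tilde\alpha|_{F_b\times F_b}=0$ in Chow, and pushing forward by the restriction of $\Gamma$ to the fiber gives the desired vanishing of $\gamma|_{Z_b}$.

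For part $(ii)$, the argument proceeds in two steps. First, a standard spreading/specialization argument, together with the fact that $\rC^{\circ}$ parametrizes a locally complete family whose very general member $Z_b$ has Picard rank~$1$, shows that the image of the restriction map $\CH^{2}(\rZ)_{\Q}\to \CH^{2}(Z_b)_{\Q}$ is contained in the tautological subring generated by the square of the polarization, by $c_{2}(Z_b)$, and by the $\Gamma_{\ast}$-images of tautological codimension-$2$ classes on $F_b\times F_b$. Second, Theorem~\ref{thm:mainBD2} ensures that the tautological subring of $\CH^{2}(F_b\times F_b)_{\Q}$ injects into cohomology; since $\Gamma_{\ast}$ is compatible with cycle class maps, the same injectivity descends to $Z_b$, from which $(ii)$ follows.

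The main obstacle I expect is the construction and control of the correspondence $\Gamma$ with enough precision to guarantee the surjectivity of $\Gamma_{\ast}$ on $0$-cycles in a family-compatible manner, and to identify the Beauville--Voisin class of $Z_b$ with the $\Gamma_{\ast}$-image of a distinguished cycle on $F_b\times F_b$ built from the Beauville--Voisin class of $F_b$ of \cite{MR2435839}. This requires a careful study of the degenerations of the rational map from pairs of lines to twisted cubics within the LLSvS construction. Once these geometric inputs are in hand, the reduction to Theorem~\ref{thm:mainBD2} is essentially formal.
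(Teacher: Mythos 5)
You have correctly identified the right reduction target (Theorem~\ref{thm:mainBD2}) and essentially the right geometric bridge: the correspondence you describe via residuation on the cubic surface spanned by a pair of lines is precisely Voisin's degree-$6$ dominant rational map $\psi\colon F\times F\dashrightarrow Z$ of \cite[Proposition 4.8]{MR3524175}, which the paper uses and which you need not reconstruct. However, your reduction runs in the wrong direction and this creates a genuine gap. You propose to realize $\gamma|_{Z_b}$ as $\Gamma_{b*}$ of a \emph{universal}, fiberwise homologically trivial cycle $\tilde\alpha$ on $\rF\times_{\rC^{\circ}}\rF$. Fiberwise surjectivity of $\Gamma_{b*}$ on $\CH_0$ does not produce such a $\tilde\alpha$: a fiberwise preimage need not spread to a cycle on the total space (Voisin-type spreading only works after a base change over which the preimages vary algebraically, landing you on a different family for which Theorem~\ref{thm:mainBD2} is not stated), and even granting existence, the homological triviality of $\tilde\alpha$ does not follow from that of $\gamma$ since $\Gamma_*$ is far from injective in cohomology; your appeal to Bergeron--Li does not repair this. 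The correct and much simpler move is to go the other way: resolve the indeterminacies of the family version of $\psi$, say $\tau\colon \widetilde{\rF\times\rF}\to \rF\times_{B}\rF$ and $f\colon \widetilde{\rF\times\rF}\to\rZ$, and consider $\tau_*f^*(\gamma)$. This \emph{is} manifestly a universal cycle on $\rF\times_{B}\rF$, fiberwise of degree $0$, so Theorem~\ref{thm:mainBD2} gives $\tau_{b*}f_b^*(\gamma|_{Z_b})=0$; birational invariance of $\CH_0$ then gives $f_b^*(\gamma|_{Z_b})=0$ for general $b$, and the projection formula $f_{b*}f_b^*=6\cdot\id$ yields $\gamma|_{Z_b}=0$, with a specialization argument covering all $b$.

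For part $(ii)$ your sketch has the same structural problem and additionally misses the ingredient that makes codimension $2$ tractable: since $H^3(Z_b,\Q)=H^3(F_b\times F_b,\Q)=0$, a codimension-$2$ cycle is homologically trivial if and only if it lies in the Abel--Jacobi kernel, and $\CH^2_{AJ}$ (like $\CH_0$) is a birational invariant of smooth projective varieties; this is exactly what lets the pullback argument of part $(i)$ go through verbatim, replacing ``$\tau_{b*}$ is an isomorphism on $\CH_0$'' by ``$\tau_{b*}$ is an isomorphism on $\CH^2_{\hom}$''. Your alternative claim that the image of $\CH^2(\rZ)_{\Q}\to\CH^2(Z_b)_{\Q}$ lands in a tautological subring on which injectivity ``descends'' from $F_b\times F_b$ via $\Gamma_*$ is not substantiated: compatibility of $\Gamma_*$ with cycle class maps only shows that pushforwards of homologically trivial cycles are homologically trivial, not that homologically trivial classes in the image of the restriction map vanish. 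I would also note that no identification of the Beauville--Voisin class of $Z_b$ is needed anywhere in the argument, so the ``main obstacle'' you flag at the end is a red herring.
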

	
	As a consequence, we deduce a part of the Beauville--Voisin Conjecture
	\ref{conj:BV} as well as the refined Conjecture \ref{conj:BV+} for LLSvS
	eightfolds\,:
	\begin{cor}\label{cor:BVChern}
		Given any smooth cubic fourfold $X$ which does not contain a plane, let $Z$ be
		the LLSvS hyper-K\"ahler eightfold associated to $X$. Denote by $h$ the
		polarization class. Then the classes $$h^{8}, c_{2}h^{6}, c_{2}^{2}h^{4},
		c_{2}^{3}h^{2}, c_{2}^{4},c_{4}h^{4}, c_{2}c_{4}h^{2}, c_{2}^{2}c_{4},
		c_{6}h^{2}, c_{2}c_{6}, c_{4}^{2}, c_{8}\in \CH_{0}(Z)_{\Q}$$ are all
		proportional, where $c_{i}:=c_{i}(T_{Z})$ is the $i$-th (Chow-theoretic) Chern
		class of the tangent bundle of $Z$. We call the generator of degree 1 in this
		one-dimensional subspace the \emph{canonical 0-cycle class} or the
		\emph{Beauville--Voisin class} of $Z$, denoted by $\mathfrak{o}_{Z}$.\\
		More strongly, let $R^\ast(Z)$ be the $\Q$-subalgebra generated by the
		polarization class $h$, the Chern classes $c_{i}$ together with the following
		classes of coisotropic subvarieties of $Z$\,:
		\begin{itemize}
			\item the embedded cubic fourfold $X\subset Z$ (\cite{LLSvS})\,;
			\item the space of twisted cubics contained in a general hyperplane section of
			$X$ (\cite{ShinderSoldatenkov})\,;
			\item the coisotropic subvarieties of codimension $1,2,3,4$ constructed by
			Voisin \cite[Corollary 4.9]{MR3524175}\,;
			\item the fixed locus of the anti-symplectic involution $\iota$ of $Z$
			(\cite{LLMS})\,;
			\item the images by $\iota$ of all the above subvarieties.
		\end{itemize}
		Then $R^{8}(Z)=\Q\cdot \mathfrak{o}_{Z}$.
	\end{cor}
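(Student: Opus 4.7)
The strategy is to deduce the corollary directly from part (i) of Theorem~\ref{thm:mainLLSvS}, after verifying that each of the listed generators of $R^\ast(Z)$ is the restriction of a class defined on the universal family $\rZ\to \rC^\circ$. Once this is done, every $\gamma\in R^8(Z)$ is the restriction of some $\widetilde{\gamma}\in\CH^8(\rZ)_{\Q}$, and setting $\mathfrak{o}_Z\deff \frac{1}{\deg(h^8)}\,h^8$, the class $\widetilde{\gamma}-\deg(\gamma)\cdot\frac{1}{\deg(h^8)}\,h^8\in\CH^8(\rZ)_{\Q}$ is fiberwise of degree zero, so restricts to zero on $Z_b$ by Theorem~\ref{thm:mainLLSvS}(i). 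This immediately yields both assertions.

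The first step is to check universality of the algebraic generators. The polarization $h$ and the Chern classes $c_i=c_i(T_{\rZ/\rC^\circ})$ obviously extend to classes on $\rZ$, so the initial list of monomials in $h$ and the $c_i$'s is automatically universal; this already gives the first part of the corollary (the proportionality statement that defines $\mathfrak{o}_Z$).

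The second step is to verify the analogous universality for each of the coisotropic subvarieties appearing in $R^\ast(Z)$. Concretely: the embedding $X\hookrightarrow Z$ of \cite{LLSvS} is produced by a functorial construction (the contraction of a divisor on the moduli of twisted cubics), hence the universal cubic $\rX\to\rC^\circ$ embeds inside $\rZ$; the Shinder--Soldatenkov subvariety is cut out relative to the universal linear system of hyperplane sections of $\rX$; Voisin's coisotropic subvarieties of codimension $1,2,3,4$ are constructed in \cite{MR3524175} by a procedure (using the rational self-map and its iterated orbits) that is functorial in families of polarized hyper-K\"ahler varieties; the anti-symplectic involution $\iota$ of \cite{LLMS} is itself functorial, so its fixed locus spreads out over $\rC^\circ$, and consequently so do the $\iota$-images of all the preceding subvarieties. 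In each case it is enough to produce the desired class over a Zariski-open substack containing $b$, since $\rC^\circ$ is irreducible and the restriction map $\CH^\ast(\rZ)_{\Q}\to\CH^\ast(Z_b)_{\Q}$ factors through any such open.

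The main obstacle is this second step: one must check, case by case, that each of the coisotropic constructions really does spread out in families and produces a class on $\rZ$ whose restriction to $Z_b$ is the advertised one. For the Voisin subvarieties and the $\iota$-images, one has to be slightly careful because the constructions involve rational maps and auxiliary choices, but in each instance the underlying data (the hyper-K\"ahler structure, the polarization, the involution $\iota$, the embedded cubic $X$, a general hyperplane section) is relative over $\rC^\circ$, and the coisotropic subvarieties are defined as closures of loci cut out by these data, so the classes they carry are pulled back from $\rZ$. Once this is granted, the argument from the opening paragraph concludes the proof, and the equality $R^8(Z)=\Q\cdot\mathfrak{o}_Z$ is established.
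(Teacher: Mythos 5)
Your proposal is correct and follows essentially the same route as the paper: the paper's proof simply invokes Theorem~\ref{thm:mainLLSvS}(i) together with Proposition~\ref{prop:ChernBV}, whose proof is exactly your argument of spreading each generator out over the universal family, subtracting the appropriate multiple of $h^8/\deg(h^8)$ to get a fiberwise degree-zero relative $0$-cycle, and applying the Franchetta property. Your additional case-by-case discussion of why the listed coisotropic subvarieties are universally defined is the content the paper leaves implicit in the ``more or less tautological'' second assertion of Proposition~\ref{prop:ChernBV}, so there is no substantive difference in approach.
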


	\noindent\textbf{Conventions.} All algebraic varieties are over the field of
	complex numbers. We work with Chow groups with rational coefficients. For the
	$m$-th Hilbert scheme of a surface $S$, the two notations $S^{[m]}$ and
	$\Hilb^{m}(S)$ are used interchangeably and similarly for the relative
	situation. Chow groups of Deligne--Mumford stacks are the ones defined with
	rational coefficients by Vistoli \cite{Vistoli} (there is a definition with
	integer coefficients by Kresch \cite{Kresch}).

	\noindent\textbf{Acknowledgements.} The authors want to thank Nicolas Addington,
	Zhiyuan Li, Renjie Lyu, Nicolas Ressayre, Qizheng Yin for their interest and
	helpful comments and discussions. Thanks to the referee for many pertinent
	suggestions that helped improve the paper.

	\section{General remarks}
	
	\subsection{Generic fiber \vs geometric fibers}
	There is the following slightly different version of the generalized Franchetta
	conjecture for hyper-K\"ahler varieties\,:
	\begin{conj}\label{conj:FranchettaWeak}
		Let $\rF$ be the moduli stack of certain polarized hyper-K\"ahler varieties and
		let $\pi: \rX\to \rF$ be the universal family. Denote by $\rX_{\eta}$ the
		generic fiber of $\pi$, where $\eta=\Spec(\C(\rF))$. Then the group
		$\CH^{*}(\rX_{\eta})_{\hom}$ is zero.
	\end{conj}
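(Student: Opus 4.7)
My plan is to show that Conjecture \ref{conj:FranchettaWeak} is equivalent to the main Conjecture \ref{conj:FranchettaHK}, so that any Franchetta-type result for the total family, as in Theorems \ref{thm:mainHilb2}--\ref{thm:mainLLSvS}, automatically yields the generic-fiber version for the same moduli.

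For the implication Conjecture~\ref{conj:FranchettaHK} $\Rightarrow$ Conjecture~\ref{conj:FranchettaWeak}: given $\bar z \in \CH^*(\rX_\eta)_{\hom}$, I spread it out using $\CH^*(\rX_\eta)_\Q = \indlim_{U\subset \rF} \CH^*(\rX|_U)_\Q$ (filtered colimit over nonempty open substacks $U$), and extend to $z \in \CH^*(\rX)_\Q$ by taking the closure of the spread-out representative. Smoothness and properness of $\pi$ imply, via Ehresmann, that the relative cohomology sheaves $R^{2i}\pi_*\Q$ are local systems on $\rF$; hence the homological triviality of $\bar z$ on $\rX_\eta$ propagates to homological triviality of $z|_{X_b}$ for every $b\in \rF$. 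Conjecture~\ref{conj:FranchettaHK} then gives $z|_{X_b}=0$ in $\CH^*(X_b)_\Q$ for all $b\in \rF$. A standard spreading-of-rational-equivalences argument---available because $\rF$ is uncountable over $\C$, so that very general behaviour extrapolates to the generic point---upgrades this pointwise vanishing to $z|_{\rX|_V}=0$ on some nonempty open $V\subset \rF$, whence $\bar z = z|_{\rX_\eta}=0$.

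For the converse Conjecture~\ref{conj:FranchettaWeak} $\Rightarrow$ Conjecture~\ref{conj:FranchettaHK}: given $z\in \CH^*(\rX)_\Q$ fiberwise homologically trivial, the class $z|_{\rX_\eta}$ is homologically trivial and hence zero by Conjecture~\ref{conj:FranchettaWeak}. Therefore $z$ is represented by a cycle supported on $\pi^{-1}(D)$ for some proper closed substack $D\subset \rF$, so that $z|_{X_b}=0$ already for $b\in \rF\setminus D$. For $b\in D$, I would proceed by induction on $\dim \rF$ applied to the (normalized) irreducible components of $D$, viewing each as the base of a new family of polarized hyper-K\"ahler varieties and invoking Conjecture~\ref{conj:FranchettaWeak} for that subfamily.

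The main obstacle is of course that Conjecture~\ref{conj:FranchettaHK} is itself wide open in general, so this equivalence yields unconditional progress only in the cases established by the main theorems of this paper. A more technical subtlety lies in the inductive step of the converse direction: components of $D$ may parameterize hyper-K\"ahler varieties with degenerate polarization or non-generic Picard rank, so one has to verify that the inductive hypothesis still applies, either by enlarging the moduli problem, by an explicit specialization-and-deformation argument, or by a direct geometric analysis at the boundary.
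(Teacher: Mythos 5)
The statement you are addressing is a conjecture, so what is really at stake is the paper's Lemma~\ref{lemma:Compare}, which proves the equivalence of Conjecture~\ref{conj:FranchettaWeak} with Conjecture~\ref{conj:FranchettaHK}; your proposal is exactly such an equivalence argument. Your first direction (Conjecture~\ref{conj:FranchettaHK} $\Rightarrow$ Conjecture~\ref{conj:FranchettaWeak}) is essentially the paper's: spread out, use that fiberwise homological triviality is detected on one fiber, apply Conjecture~\ref{conj:FranchettaHK} fiberwise, and use the countability/decomposition-of-the-diagonal argument to pass from vanishing on a very general fiber to vanishing over a dense open subset, hence at $\eta$. That part is fine.

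The converse direction has a genuine gap. After deducing from $z|_{\rX_\eta}=0$ that $z$ is supported on $\pi^{-1}(D)$ for a proper closed $D\subset\rF$, you propose to handle $b\in D$ by induction on dimension, ``invoking Conjecture~\ref{conj:FranchettaWeak} for that subfamily.'' But the conjecture is a statement about the universal family over the \emph{full} moduli stack of a locally complete family; a component of $D$ (e.g.\ a Noether--Lefschetz-type locus) is not such a moduli stack, carries extra universal cycles (additional divisor classes, etc.), and the Franchetta property for the restricted family is a genuinely different and possibly stronger statement that is not among your hypotheses. You flag this yourself as a ``technical subtlety,'' but it is in fact the crux: as stated, the induction cannot be run. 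The correct and much shorter route, which is what the paper does, is a specialization argument: for $b_0\in D$, pull the family back along a DVR (or a smooth curve) whose generic point lands in $U=\rF\setminus D$; since $z$ restricts to zero on the generic fiber of this one-parameter family, the specialization homomorphism on Chow groups (Fulton, \S 20.3) forces $z|_{X_{b_0}}=0$. This requires no inductive hypothesis on subfamilies. I would replace your stratification-induction by this specialization step.
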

	Here homological equivalence is with respect to some classical Weil
	cohomology\,; for instance, \'etale cohomology or de Rham cohomology. 
	
	\begin{lemma}\label{lemma:Compare}
		Conjecture \ref{conj:FranchettaHK} and Conjecture \ref{conj:FranchettaWeak} are
		equivalent.
	\end{lemma}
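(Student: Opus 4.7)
\emph{Strategy.} The key algebraic input is that $\CH^*(\rX_\eta)_\Q = \indlim_U \CH^*(\pi^{-1}(U))_\Q$, where $U$ ranges over dense open subsets of $\rF$, with each $\CH^*(\pi^{-1}(U))_\Q$ a quotient of $\CH^*(\rX)_\Q$ via the localization exact sequence. Hence every class $z_\eta \in \CH^*(\rX_\eta)$ lifts to some $z \in \CH^*(\rX)$, and dually $z_\eta = 0$ iff $z|_{\pi^{-1}(V)} = 0$ for some dense open $V \subset \rF$. On the cohomology side, smooth proper base change makes $R^*\pi_*\Q$ a local system on $\rF$, and the assignment $b \mapsto [z|_{\rX_b}]$ is a flat section; in particular $[z_\eta] = 0$ in $H^*(\rX_\eta)$ if and only if $[z|_{\rX_b}] = 0$ for very general $b$.

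\emph{Forward direction} (Conjecture \ref{conj:FranchettaHK} $\Rightarrow$ Conjecture \ref{conj:FranchettaWeak}). Given $z_\eta \in \CH^*(\rX_\eta)_{\hom}$, lift to $z \in \CH^*(\rX)$ as above. The cohomology remark shows $z|_{\rX_b}$ is homologically trivial for very general $b$, so Conjecture \ref{conj:FranchettaHK} forces $z|_{\rX_b} = 0$ in $\CH^*(\rX_b)_\Q$ for every closed $b \in \rF$. A standard Bloch--Srinivas type spreading argument then concludes: the locus $\{b : z|_{\rX_b} = 0\}$ is a countable union of constructible subsets of $\rF$ (parametrized by Hilbert schemes of rational equivalences in the fibers), and since this union covers every closed point of the uncountable $\C$-variety $\rF$, at least one such constructible subset must contain a dense open $V \subset \rF$. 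Over $V$ the rational equivalences glue to give $z|_{\pi^{-1}(V)} = 0$, hence $z_\eta = 0$.

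\emph{Backward direction} (Conjecture \ref{conj:FranchettaWeak} $\Rightarrow$ Conjecture \ref{conj:FranchettaHK}). Let $z \in \CH^*(\rX)$ with $z|_{\rX_b}$ homologically trivial for very general $b$. The cohomology argument yields $[z_\eta] = 0$, so Conjecture \ref{conj:FranchettaWeak} gives $z_\eta = 0$, i.e.\ $z|_{\pi^{-1}(V)} = 0$ for some open $V \subset \rF$. For $b \in V$, $z|_{\rX_b} = 0$ is immediate. For $b_0 \in \rF \setminus V$, the smoothness and connectedness of $\rF$ permit choosing a smooth curve $C \subset \rF$ through $b_0$ meeting $V$; then $z$ restricted to the smooth proper family $\pi^{-1}(C) \to C$ vanishes on the open $\pi^{-1}(V \cap C)$, hence on the generic fiber $\rX_{\eta_C}$. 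Fulton's specialization map for cycles in a smooth proper family over a DVR then gives $z|_{\rX_{b_0}} = \mathrm{sp}_{b_0}(0) = 0$.

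\emph{Main obstacle.} The most delicate step is the Bloch--Srinivas spreading argument in the forward direction, which crucially uses the uncountability of $\rF$ as a $\C$-variety along with the constructibility of images of Hilbert schemes parametrizing rational equivalences. The backward direction is essentially formal given Fulton's specialization theorem for cycles in smooth proper families, together with the ability to connect any two points of $\rF$ by a smooth curve.
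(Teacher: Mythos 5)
Your proposal is correct and follows essentially the same route as the paper's own proof: the forward direction combines the comparison between the generic and the very general fiber (which the paper quotes from \cite[Lemma 2.1]{MR3158241}) with the Bloch--Srinivas spreading argument, and the backward direction restricts to general fibers and then specializes. You merely spell out the details that the paper delegates to the references, so there is nothing to add.
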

	\begin{proof}
		Let us start by assuming Conjecture \ref{conj:FranchettaHK}. 
		Using \cite[Lemma 2.1]{MR3158241}, the hypothesis that the restriction of $z$ to
		the geometric generic fiber is homologically trivial implies that the
		restriction of $z$ to a very general geometric fiber is also trivial. Now the
		conclusion of Conjecture \ref{conj:FranchettaHK} says that the restriction of
		$z$ to a very general geometric fiber is (rationally equivalent to) zero. By the
		standard argument of decomposition of the diagonal (\cite{MR714776},
		\cite{MR1997577}, \cite{MR3186044}), this implies the existence of a Zariski
		open dense subset $U\subset \rF$, such that $z|_{\rX_{U}}$ is zero. In
		particular, $z_{\eta}$ is rationally equivalent to zero.\\
		For the other direction, since we know that $\CH^{*}(\rX_{\eta})_{\hom}=0$, by
		restriction we can show Conjecture ~\ref{conj:FranchettaHK} for general fibers.
		Then a standard specialization argument allows us to conclude for all fibers.
	\end{proof}
	
	Thanks to Lemma \ref{lemma:Compare}, we will focus in this paper on Conjecture
	\ref{conj:FranchettaHK}.
	
	\subsection{Relation to Beauville--Voisin conjecture}
	As is mentioned in the introduction, the generalized Franchetta conjecture
	\ref{conj:FranchettaHK} is very much related to the following Beauville--Voisin
	conjecture\,:
	\begin{conj}[Beauville--Voisin \cite{MR2187148},
		\cite{MR2435839}]\label{conj:BV}
		Let $X$ be a projective hyper-K\"ahler variety. Let the \emph{Beauville--Voisin
			subring} $\langle c_{i}(X), Pic(X)\rangle$ be the $\Q$-subalgebra of
		$\CH^{*}(X)$ generated by line bundles and all (Chow-theoretic) Chern classes of
		$T_{X}$. Then the restriction of the cycle class map to the Beauville--Voisin
		subring is injective. In other words, any polynomial of line bundles and Chern
		classes of $X$ is homologically equivalent to zero if and only if it is
		rationally equivalent to zero.
	\end{conj}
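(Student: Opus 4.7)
The conjecture is widely open in full generality; my plan is a two-part reduction that fits the framework of this paper, handling the portion controlled by the generalized Franchetta conjecture and attempting the remainder via motivic techniques.

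\textbf{Step 1 (Chern classes and polarization).} Inside $\langle c_i(X), \Pic(X)\rangle$, the sub-$\Q$-algebra generated by the polarization $h$ and the Chern classes $c_i(T_X)$ always extends to the universal family $\rX\to \rF$ as a sub-$\Q$-algebra of $\CH^*(\rX)_{\Q}$, via the relative polarization and the relative tangent bundle. Thus Conjecture~\ref{conj:FranchettaHK}, applied to $\rX$ and to its fiber powers to accommodate polynomial products, immediately yields injectivity of this subring into cohomology fiber by fiber; this is exactly the content of the forthcoming Proposition~\ref{prop:ChernBV}. Via Theorems~\ref{thm:mainHilb2}--\ref{thm:mainLLSvS} this step becomes unconditional in several families.

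\textbf{Step 2 (Non-polarization line bundles).} For an $X$ with Picard rank larger than the one the universal family allows, I would construct a \emph{multiplicative} Chow--K\"unneth decomposition (MCK): a Chow--K\"unneth decomposition $\{\pi^i\}$ of the diagonal such that the class of the small diagonal of $X^3$ lies in $\bigoplus_{i+j+k=2\dim X}\pi^i\otimes\pi^j\otimes\pi^k$. This yields a multiplicative bigrading $\CH^*(X)_{\Q}=\bigoplus_s \CH^*(X)_{(s)}$. One then verifies the \emph{weak splitting property} of Beauville--Voisin: every $L\in\Pic(X)$ sits in $\CH^1(X)_{(0)}$ and every $c_i(T_X)$ in $\CH^i(X)_{(0)}$. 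By multiplicativity the whole Beauville--Voisin subring lands in $\CH^*(X)_{(0)}$, which by the very construction of an MCK lifting the cohomological K\"unneth decomposition injects into $H^*(X,\Q)$ under the cycle class map.

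The main obstacle is Step 2: producing an MCK together with the weak splitting property for an arbitrary hyper-K\"ahler variety is currently out of reach, being known only for K3 surfaces, their Hilbert schemes in low dimension, Fano varieties of lines on cubic fourfolds, and generalized Kummer varieties. In each concrete case, the more realistic strategy is to combine Step 1 with an ad hoc geometric argument -- in the spirit of Beauville--Voisin's original use of rational curves on a K3 surface -- to handle each additional Picard generator individually, pushing each such divisor class into $\CH^1(X)_{(0)}$ by intersecting with distinguished cycles (rational curves, Lagrangian subvarieties, coisotropic loci) whose rational triviality relations have been established in the literature.
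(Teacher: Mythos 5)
First, note that the statement you were asked to prove is stated in the paper as a \emph{conjecture} (Conjecture \ref{conj:BV}); the paper offers no proof of it, and only establishes the implication ``generalized Franchetta conjecture $\Rightarrow$ the part of Conjecture \ref{conj:BV} involving the polarization and the Chern classes'' (Proposition \ref{prop:ChernBV}). Your Step 1 is exactly that implication and coincides with the paper's argument: spread $P(h,c_i(T_X))$ to $P(h,c_i(T_{\rX/\rF}))\in\CH^\ast(\rX)$ and apply Conjecture \ref{conj:FranchettaHK}. (A small simplification: you do not need fiber powers of $\rX$ here, since a polynomial in $h$ and the $c_i$ is already a single cycle on $\rX$ itself.) But this step is conditional on Conjecture \ref{conj:FranchettaHK} and covers only the polarization, not all of $\Pic(X)$, so it cannot by itself yield the full statement.

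The genuine gap is in Step 2, and it is twofold. First, as you yourself acknowledge, producing a multiplicative Chow--K\"unneth decomposition with the weak splitting property for an arbitrary projective hyper-K\"ahler variety is open, so the proposal does not constitute a proof --- consistent with the statement being a conjecture. Second, and more seriously, your concluding inference is incorrect as stated: it is \emph{not} true that $\CH^\ast(X)_{(0)}$ injects into cohomology ``by the very construction of an MCK.'' The graded piece $\CH^i(X)_{(0)}=(\pi^{2i})_\ast\CH^i(X)$ has no a priori reason to meet the homologically trivial cycles only in $0$; that it does is a Bloch--Beilinson/Murre-type statement that must be proven separately. The paper itself only invokes (citing \cite{motiveHK}) the injectivity of $\CH^i(X)_{(0)}\to H^{2i}(X,\Q)$ in the range $i\ge\dim X-1$ (see the proof of Corollary \ref{cor:BV+r}), and for the Fano variety of lines the injectivity of the cycle class map on the relevant tautological (graded-zero) subrings is precisely what Propositions \ref{prop:AllTaut} and \ref{prop:InjTaut} labor to establish through explicit tautological relations, including the new relation of the Appendix. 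So even granting an MCK and the weak splitting property, your Step 2 would still require the separate, and in general conjectural, input that the graded-zero part of the Chow ring injects into cohomology.
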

	The original version due to Beauville in \cite{MR2187148}, under the name of
	\emph{weak splitting property}, contains only line bundles\,; the Chern classes
	of the tangent bundle are introduced by Voisin in \cite{MR2435839}. Some active
	progress towards this conjecture has recently been made\,: see \cite{MR2187148},
	\cite{MR2435839}, \cite{MR3356741},  \cite{MR3351754}, \cite{MR3579961},
	\cite[Theorem 1.14]{MHRCK3} for the known results and more details. More
	recently, Voisin \cite{MR3524175} proposes the following stronger version of
	Conjecture \ref{conj:BV} involving some coisotropic subvarieties.
	Recall that a subvariety is called \emph{coisotropic} if the tangent space at
	each regular point of this subvariety is a coisotropic subspace
	(\emph{i.e.}~containing its orthogonal) with respect to the holomorphic
	symplectic form. We say that a subvariety of codimension $i$ is \emph{strongly
		coisotropic} if it can be swept out by $i$-dimensional subvarieties that are
	constant cycle subvarieties of the ambient hyper-K\"ahler variety. (Naturally, a
	strongly coisotropic subvariety is coisotropic.)
	\begin{conj}[Voisin's refinement \cite{MR3524175}]\label{conj:BV+}
		Let $X$ be a projective hyper-K\"ahler variety.  Then the restriction of the
		cycle class map to the $\Q$-subalgebra of $\CH^\ast(X)$ generated by line
		bundles, Chern classes of $T_{X}$ and strongly coisotropic subvarieties, is
		injective. 
	\end{conj}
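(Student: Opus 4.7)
The plan is to reduce Conjecture~\ref{conj:BV+} to the Generalized Franchetta Conjecture~\ref{conj:FranchettaHK}. Given the projective hyper-K\"ahler variety $X$, embed it into a locally complete polarized family $\pi\colon \rX\to \rF$ in its deformation class, with $X=\rX_b$ for some $b\in\rF$, and consider the restriction map $r\colon \CH^\ast(\rX)_\Q\to\CH^\ast(X)_\Q$. Assuming this family satisfies the Franchetta property, it suffices to prove that each generator of the subalgebra under study (line bundles, Chern classes of $T_X$, and strongly coisotropic subvarieties) lies in the image of $r$. Indeed, given such lifts, any polynomial $P$ in these generators whose cohomology class vanishes on $X=\rX_b$ lifts to a cycle on $\rX$ which is fiberwise homologically trivial, since cohomology classes are locally constant in the family by Ehresmann's theorem; the Franchetta property then forces $P=0$ in $\CH^\ast(X)_\Q$.

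The easy part is line bundles and Chern classes: Chern classes of $T_X$ are obviously restrictions of $c_i(T_{\rX/\rF})$, and each line bundle on $X$ extends to $\rX$ after a finite \'etale cover of $\rF$ trivializing the monodromy action on $\Pic(X)$, using the smoothness of the relative Picard scheme over $\rF$. The main obstacle is therefore the strongly coisotropic subvarieties. By definition these are codimension-$i$ subvarieties swept out by $i$-dimensional constant-cycle subvarieties; the constant-cycle condition is Hodge-theoretic, so one expects the parameter spaces of such strongly coisotropic subvarieties to arise from natural monodromy-equivariant constructions over $\rF$, which would yield relative classes in $\CH^\ast(\rX)_\Q$ restricting to the given ones.

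The hard part is therefore twofold: first, a proof of Conjecture~\ref{conj:FranchettaHK} for the ambient family $\rX/\rF$, which is presently open outside the low-dimensional cases treated in this paper; and second, a uniform construction showing that every strongly coisotropic subvariety of a fiber deforms to a cycle on $\rX$. Lacking a general theorem for either, one proceeds case by case: for each known construction of strongly coisotropic subvariety (the Lagrangian surfaces of \cite{IM}, the subvarieties $E_\mu$ of \cite{MR3524175}, the surface of bitangents, fixed loci of anti-symplectic involutions, embedded cubic fourfolds inside LLSvS eightfolds, etc.) one checks directly that it comes from a relative construction over the appropriate moduli stack, and one establishes the Franchetta property for that stack by combining Voisin's spread/decomposition-of-the-diagonal technique with the geometry of Mukai models or projective realizations. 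Corollaries~\ref{cor:BV+}, \ref{cor:BV++}, and \ref{cor:BVChern} of the present paper are instances of this strategy.
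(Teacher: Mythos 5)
The statement you are addressing is a \emph{conjecture} in the paper (Voisin's refinement of the Beauville--Voisin conjecture, quoted from \cite{MR3524175}); the paper offers no proof of it and only establishes special cases (Corollaries \ref{cor:BV+}, \ref{cor:BV++}, \ref{cor:doubleEPW}, \ref{cor:BVChern}). Your reduction strategy is essentially the paper's Proposition \ref{prop:ChernBV}: if the generalized Franchetta conjecture holds for the ambient family, then the cycle class map is injective on the subalgebra generated by cycles that are \emph{universally defined} over the moduli space. So the mechanism you describe is the right one and is the one the paper uses for its partial results. But what you have written is not a proof: it is conditional on Conjecture \ref{conj:FranchettaHK}, which is open outside the cases treated in the paper, and on the unproven assertion that every strongly coisotropic subvariety of every fiber spreads out over the moduli space. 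You acknowledge both gaps, which is honest, but it means the conjecture remains exactly as open after your argument as before.

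There is also a concrete error in what you call the easy part. For a \emph{locally complete polarized} family, the very general fiber has Picard rank one, so a line bundle on a special fiber $X$ that is not a multiple of the polarization does \emph{not} extend to the total space $\rX$, even after a finite \'etale cover of $\rF$: such classes exist only over the Noether--Lefschetz locus in $\rF$, and no monodromy-trivializing cover repairs this, since the obstruction is the jumping of the Picard group rather than a monodromy action on a locally constant sheaf. This is precisely why Proposition \ref{prop:ChernBV} restricts to the polarization class and the Chern classes (and, more generally, to universally defined cycles). To cover arbitrary line bundles one would need the Franchetta property for the universal family restricted to each Noether--Lefschetz stratum, a strictly stronger and separate statement. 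A correct write-up should either restrict the claim to the polarization (as the paper's corollaries implicitly do, since in those corollaries the relevant divisors are universally defined) or explicitly flag the Noether--Lefschetz issue as a further conditional input.
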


	We would like to point out that the generalized Franchetta conjecture implies
	the part of the Beauville--Voisin conjecture involving only the Chern classes of
	the tangent bundle and the polarization class. More generally it actually
	implies part of the refined Conjecture \ref{conj:BV+} once taking into account
	strongly coisotropic subvarieties which are defined universally over the moduli
	space (see Corollaries \ref{cor:BV+}, \ref{cor:BV++}, \ref{cor:doubleEPW} and
	\ref{cor:BVChern} for examples)\,:
	\begin{prop}\label{prop:ChernBV}
		Let $\rF$ be a moduli space of polarized hyper-K\"ahler varieties. If Conjecture
		\ref{conj:FranchettaHK} holds true for the universal family over $\rF$, then for
		any member $X$ of this family, the cycle class map restricted to the
		$\Q$-subalgebra generated by the polarization line bundle and the Chern classes
		of $X$, is injective.\\
		More generally,  still assuming Conjecture \ref{conj:FranchettaHK}, for any
		member $X$ of this family, the cycle class map restricted to the $\Q$-subalgebra
		generated by the algebraic cycles of $X$ that exist universally over the moduli
		space, is injective.
	\end{prop}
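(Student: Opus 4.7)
The plan is to lift the claimed cycle-class identity on one fiber to an identity on the total space of the universal family, and then to invoke Conjecture \ref{conj:FranchettaHK} directly. First I would observe that the polarization class $h$ on any fiber $X = X_b$ of $\rX\to\rF$ is the restriction of the relative polarization $h_{\rX}$, while each Chern class $c_i(T_X)$ is the restriction of the relative Chern class $c_i(T_{\rX/\rF})$. More generally, the phrase ``algebraic cycles of $X$ that exist universally over the moduli space'' is to be interpreted as classes lying in the image of the restriction map $\CH^*(\rX)_\Q \to \CH^*(X_b)_\Q$. Since this restriction map is a ring homomorphism, any polynomial $P$ in such classes lifts to a corresponding polynomial $\tilde P \in \CH^*(\rX)_\Q$ in the universal classes, with $P = \tilde P|_{X_b}$.

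Next I would argue that homological triviality of $P$ on the single fiber $X_b$ already implies fiberwise homological triviality of $\tilde P$. Assuming $\rF$ is connected (or else one works on each connected component separately), the sheaf $R^{2i}\pi_*\Q$ is a local system on $\rF$, so the cohomology classes of the restrictions $\tilde P|_{X_{b'}}$ assemble into a flat section of this local system as $b'$ varies. Hence vanishing at one point $b$ forces vanishing at every $b'\in\rF$, so that $\tilde P$ is fiberwise homologically trivial. Applying Conjecture \ref{conj:FranchettaHK} yields $\tilde P|_{X_{b'}} = 0$ in $\CH^*(X_{b'})_\Q$ for every $b'$; specializing to $b' = b$ gives $P = 0$, which is the desired injectivity of the cycle class map on the $\Q$-subalgebra under consideration, proving both assertions simultaneously.

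The argument is essentially tautological once the Franchetta property is granted; the only step that merits explicit mention is the local constancy of fiberwise cohomology classes across the base, which is what lets a relation at a single fiber propagate to every fiber and thus match the universal hypothesis of Conjecture \ref{conj:FranchettaHK}. Accordingly, there is no substantial obstacle to overcome in the proof.
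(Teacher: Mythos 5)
Your proof is correct and follows essentially the same route as the paper: lift the polynomial in universally defined classes to the universal family, observe that fiberwise homological triviality holds (the paper records this as the remark that homological triviality on one fiber is equivalent to homological triviality on every fiber, which is exactly your local-system argument), and then apply Conjecture \ref{conj:FranchettaHK}. The only difference is that you spell out the local constancy of the fiberwise cohomology class, which the paper takes for granted.
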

	\begin{proof}
		For any member $X$ and any given polynomial in the polarization line bundle and
		the Chern classes of the tangent bundle $z:=P(h, c_{i}(T_{X}))\in \CH^\ast(X)$
		such that the cohomology class of $z$ vanishes, we want to show that $z=0$.
		Consider $\gamma:= P(h, c_{i}(T_{\rX/\rF}))\in \CH^\ast(\rX)$. Clearly
		$\gamma|_{X}=z$ and hence $\gamma$ has fiber-wise vanishing cohomology class.
		Then the generalized Franchetta conjecture \ref{conj:FranchettaHK} says exactly
		that $z$ is rationally equivalent to zero. 
		The last assertion is more or less tautological.
	\end{proof}

	\subsection{Moduli space \vs parameter space}
	\begin{rmk}\label{rmk:Parameter}
		To establish the generalized Franchetta conjecture (or more generally the
		Franchetta property) in some cases, it will be convenient to work over some
		parameter space which dominates the moduli stack, instead of the moduli stack
		itself. More precisely, keep the same notation as in Conjecture
		\ref{conj:FranchettaHK} and let $B \to U$ be a surjective morphism from some
		smooth parameter space $B$ (it will often be denoted by $B^{\circ}$ in concrete
		situations) to some smooth Zariski dense open subset $U$ of the moduli stack
		$\rF$. Denote by $\rY\to B$ the pulled-back family of the universal family
		$\rX\to \rF$. Then the generalized Franchetta conjecture for $\rY\to B$ implies
		the generalized Franchetta conjecture for $\rX\to \rF$ (but not conversely).
		\begin{equation*}
		\xymatrix{
			\rY \cart\ar[d] \ar@{->>}[r] & \rX_{U}\cart \ar[d] \ar@{^{(}->}[r] &\rX\ar[d]\\
			B \ar@{->>}[r] & U \ar@{^{(}->}[r] & \rF
		}
		\end{equation*}
		Indeed, for any $z\in \CH^\ast(\rX)$, denote by $z'\in \CH^\ast(\rY)$ its
		pull-back image in $\rY$. Obviously, the hypothesis that the restriction of $z$
		to a very general fiber of $\rX/\rF$ is homologically trivial implies the same
		thing for the restriction of $z'$ to the fibers of $\rY/B$. The generalized
		Franchetta conjecture for $\rY/B$ then implies that $z'$ restricts to zero on
		each fiber of $\rY/B$. Hence so does $z$ for each fiber of $\rX_{U}\to U$. A
		specialization argument shows that the same thing holds for each fiber of
		$\rX\to \rF$.
	\end{rmk}

	\section{Fano varieties of lines on cubic fourfolds}\label{sect:BD}
	In this section, we prove Theorem \ref{thm:mainBD}, which by Remark
	\ref{rmk:Parameter} confirms the generalized Franchetta conjecture for the
	20-dimensional locally complete family of 
	polarized hyper-K\"ahler fourfolds constructed by Beauville--Donagi in
	\cite{MR818549}. The key idea of the proof is as in \cite{MR3099982} and
	\cite{FranchettaK3}\,: the universal family has very simple Chow groups. 
	
	We start by setting up some notations.
	Let $V$ be a 6-dimensional vector space and $\P^{5}=\P(V)$ be its
	projectivization. 
	The parameter space of possibly singular cubic fourfolds is given by the
	following projective space\,:
	$$B\deff \P\left(H^{0}(\P^{5}, \rO(3))\right)=\P(\Sym^{3}V^{\vee})\isom
	\P^{55}.$$
	Let $B^{\circ}\subset B$ be the open subset parameterizing smooth cubic
	fourfolds. We thus have the universal family $\rX\to B$ as well as the smooth
	family $\rX^{\circ}\to B^{\circ}$ by base-change.
	
	Let $G\deff \Gr(\P^{1}, \P^{5})\isom \Gr(2,6)$ be the Grassmannian variety
	parameterizing all projective lines in $\P^{5}$. Denote by $S$ (\resp $Q$) the
	tautological rank 2 subbundle (\resp rank 4 quotient bundle), fitting into the
	following short exact sequences of vector bundles over $G$\,:
	$$0\to S\to \rO_{G}\otimes V\to Q\to 0.$$
	Note that for any equation $f\in \Sym^{3}V^{\vee}$, the above short exact
	sequence gives a section $s_{f}$ of the vector bundle $\Sym^{3}S^{\vee}$, whose
	zero locus $(s_{f}=0)$ is exactly the Fano variety of lines of the cubic
	fourfold defined by $f$.
	
	Consider the incidence subvariety $\rF$ in $B\times G$ defined by
	$$\rF\deff \left\{([f], l) \in B\times G~~\vert~~ f|_{l}=0\right\},$$ together
	with the two natural projections\,:
	\begin{equation*}
	\xymatrix{
		&\rF\ar[dr]^{p}\ar[dl]_{\pi}&\\
		B&& G
	}
	\end{equation*}
	It is easy to see that $\pi: \rF\to B$ is the universal Fano variety of lines of
	fibers of $\rX/B$ and that $p: \rF\to G$ is a projective bundle whose fiber over
	a line $l\in G$ parameterizes all (possibly singular) cubic fourfolds containing
	$l$.
	
	As in \cite[Lemma 2.1]{FranchettaK3}, we have the following\,:
	\begin{lemma}\label{lemma:Restriction}
		For any $b\in B$, the following two images of restriction maps are the same\,:
		$$\im\left(\CH^\ast(\rF)\to \CH^\ast(F_{b})\right)=\im\left(\CH^\ast(G)\to
		\CH^\ast(F_{b})\right).$$
	\end{lemma}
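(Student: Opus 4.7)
The inclusion $\im(\CH^\ast(G)\to\CH^\ast(F_b))\subseteq\im(\CH^\ast(\rF)\to\CH^\ast(F_b))$ is immediate: for any $\alpha\in\CH^\ast(G)$, the class $p^\ast\alpha\in\CH^\ast(\rF)$ restricts on $F_b$ to $(p|_{F_b})^\ast\alpha$, since $p|_{F_b}\colon F_b\hookrightarrow G$ is precisely the restriction of $p$ to the fiber. The content of the lemma is therefore the reverse inclusion, and this is what I propose to establish by exploiting the projective bundle structure of $p$.

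The plan is to use the description of $\rF\subset B\times G$ as a projective bundle over $G$. The condition $f|_l=0$ cuts out $\rF$ as the vanishing locus of the natural section of $\rO_B(1)\boxtimes\Sym^3S^\vee$ on $B\times G$ obtained from the fiberwise restriction map $\Sym^3V^\vee\otimes\rO_G\to\Sym^3S^\vee$; equivalently, $\rF=\P(K)$ where $K$ denotes the kernel of this surjection, and $p\colon \rF\to G$ is the natural $\P^{51}$-bundle. By the projective bundle formula, every class on $\rF$ can be written as $\sum_{i=0}^{51}p^\ast(\alpha_i)\cdot\xi^i$ with $\alpha_i\in\CH^\ast(G)$, where $\xi:=c_1(\rO_{\P(K)}(1))$.

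The key observation is then to identify $\xi$ geometrically. The inclusion $K\hookrightarrow\Sym^3V^\vee\otimes\rO_G$ yields a closed embedding $\P(K)\hookrightarrow G\times\P(\Sym^3V^\vee)=G\times B$ under which $\rO_{\P(K)}(1)$ is the restriction of $\mathrm{pr}_B^\ast\rO_B(1)$. Since the composition $\rF=\P(K)\hookrightarrow G\times B\to B$ coincides with $\pi$, one obtains $\xi=\pi^\ast\rO_B(1)$. In particular, $\xi|_{F_b}=0$ for every $b\in B$, because $\pi|_{F_b}$ is the constant map to $b$. Plugging this into the projective bundle decomposition, only the $i=0$ summand survives the restriction to $F_b$, and every class in the image of $\CH^\ast(\rF)\to\CH^\ast(F_b)$ is of the form $(p|_{F_b})^\ast\alpha_0$. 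This yields the required inclusion.

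I do not anticipate any serious obstacle: once the projective bundle description is set up, the computation $\xi=\pi^\ast\rO_B(1)$ is formal, and the rest is a direct application of the projective bundle formula. The only mild point to take care of is the precise identification of the tautological line bundle with the pullback of $\rO_B(1)$, which follows from the functoriality of $\rO(1)$ under closed embeddings $\P(K)\hookrightarrow\P(\Sym^3V^\vee\otimes\rO_G)$ induced by subbundles.
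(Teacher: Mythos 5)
Your proposal is correct and follows essentially the same route as the paper: both arguments apply the projective bundle formula to $p\colon\rF\to G$ and then observe that the tautological class $\xi$ restricts to zero on each fiber $F_b$ because it comes from $B$. Your explicit identification $\xi=\pi^\ast c_1(\rO_B(1))$ is a slightly sharper form of the paper's statement that $\xi$ is a combination of classes pulled back from $B$ and from $G$, but the conclusion is reached in the same way.
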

	\begin{proof}
		The inclusion ``$\supseteq$'' is trivial (we have the factorization $F_{b}\inj
		\rF\to G$).\\
		Let us show the inverse inclusion. Given any cycle $z\in \CH^\ast(\rF)$, we have
		by the projective bundle formula
		$$z=\sum_{k\geq 0} p^{*}(z_{k})\cdot \xi^{k},$$
		where $z_{k}\in\CH^\ast(G)$ and $\xi= c_{1}\left(\rO_{p}(1)\right)$. As in
		\cite[Lemma 2.1]{FranchettaK3}, we easily check that $\xi$ is a linear
		combination of cycles pulled back from $B$ by $\pi$ and cycles pulled back from
		$G$ by $p$. Hence $z$ is a polynomial of cycles of the form $p^{*}(\alpha)$ and
		$\pi^{*}(\beta)$. The latter type being zero when restricted to any fiber
		$F_{b}$, the restriction of $z$ to $F_{b}$ is therefore the restriction of some
		cycle of $G$.
	\end{proof}
	
	\begin{lemma} \label{lemma:InBV}
		For any $b\in B^{\circ}$,
		$$\im\left(\CH^\ast(G)\to \CH^\ast(F_{b})\right)\subseteq\langle c_{i}(F_{b}), 
		\Pic(F_{b})\rangle,$$ where the right hand side is the \emph{Beauville--Voisin
			subring} of $\CH^\ast(F_{b})$ generated (as a $\Q$-algebra) by line bundles and
		all Chern classes of the tangent bundle of $F_{b}$.
	\end{lemma}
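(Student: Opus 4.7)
The plan is to reduce the claim to showing that $s_1 := c_1(S|_{F_b})$ and $s_2 := c_2(S|_{F_b})$ both lie in the Beauville--Voisin subring $\langle c_i(F_b), \Pic(F_b)\rangle$. Indeed, the Chow ring $\CH^\ast(G) = \CH^\ast(\Gr(2,6))$ is generated as a $\Q$-algebra by $c_1(S)$ and $c_2(S)$ (the Chern classes of the quotient bundle $Q$ being polynomial in those of $S$ via the Whitney formula $c(S)\,c(Q)=1$), so the image of the restriction map $\CH^\ast(G) \to \CH^\ast(F_b)$ is exactly the $\Q$-subalgebra generated by $s_1$ and $s_2$. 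The class $s_1$ is, up to sign, the Pl\"ucker polarization $g\in \Pic(F_b)$ pulled back from the Pl\"ucker embedding of $G$, and therefore already lies in the Beauville--Voisin subring. The substantive task is thus to express $s_2$ in terms of $g$ and of the Chern classes $c_i(T_{F_b})$.

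To this end, I would exploit the fact that $F_b$ is the smooth scheme-theoretic zero locus of the regular section $s_f\in H^0(G,\Sym^3 S^\vee)$, so that $N_{F_b/G} = \Sym^3 S^\vee|_{F_b}$. The normal bundle sequence
\[
0 \longrightarrow T_{F_b} \longrightarrow T_G|_{F_b} \longrightarrow \Sym^3 S^\vee|_{F_b} \longrightarrow 0,
\]
combined with the identification $T_G = S^\vee \otimes Q$, determines $c(T_{F_b})$ as an explicit polynomial in $s_1$ and $s_2$. A routine splitting-principle calculation applied to the rank-two bundle $S^\vee|_{F_b}$ recovers in codimension one the expected vanishing $c_1(T_{F_b}) = 0$ (reflecting that $F_b$ is hyper-K\"ahler), and in codimension two produces an identity of the shape
\[
c_2(T_{F_b}) \;=\; \alpha\, g^2 + \beta\, s_2 \qquad \text{in } \CH^2(F_b),
\]
with explicit coefficients (one finds $\alpha=5$, $\beta=-8$).

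Since $\beta\neq 0$, this relation can be inverted to express $s_2$ as a $\Q$-linear combination of $g^2$ and $c_2(T_{F_b})$, both of which manifestly belong to $\langle c_i(F_b), \Pic(F_b)\rangle$, and the inclusion follows. The only step requiring any actual work is the Chern-class computation, but it is entirely mechanical via the splitting principle; the substantive point, and the reason one does not expect a serious obstacle, is that the coefficient of $s_2$ in the codimension-two relation coming from the normal bundle sequence turns out to be nonzero, which is exactly what permits the inversion.
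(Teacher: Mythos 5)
Your proposal is correct and follows essentially the same route as the paper: reduce to the two Chern classes of the tautological bundle restricted to $F_b$, observe that the first is the Pl\"ucker polarization, and use the normal bundle sequence $0\to T_{F_b}\to T_G|_{F_b}\to \Sym^3 S^\vee|_{F_b}\to 0$ together with $T_G\isom S^\vee\otimes Q$ to derive $c_2(T_{F_b})=5g^2-8\,c_2(S^\vee|_{F_b})$ and invert. The paper's proof is the same computation (phrased via Chern characters rather than the splitting principle), arriving at the identical coefficients.
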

	\begin{proof}
		Since $\CH^\ast(G)$ is generated (as a $\Q$-algebra) by $c_{1}(S^{\vee})$ and
		$c_{2}(S^{\vee})$, it suffices to show that both of their restrictions to
		$F_{b}$ lie in the Beauville--Voisin ring. The first one being a line bundle, it
		remains to show that $c_{2}(S^{\vee}|_{F_{b}}) \in \langle c_{i}(F_{b}), 
		\Pic(F_{b})\rangle$. However, using the short exact sequence $$0\to T_{F_{b}}\to
		T_{G}|_{F_{b}}\to \Sym^{3}S^{\vee}|_{F_{b}}\to 0$$ together with the isomorphism
		$T_{G}\isom S^{\vee}\otimes Q$, one finds that 
		$$ch(T_{F_{b}})=ch(S^{\vee}|_{F_{b}})\left(6-ch(S|_{F_{b}})\right)-ch\left(\Sym^{3}S^{\vee}|_{F_{b}}\right),$$
		and hence
		$c_{2}(T_{F_{b}})=-ch_{2}(T_{F_{b}})=5c_{1}(S^{\vee}|_{F_{b}})^{2}-8c_{2}(S^{\vee}|_{F_{b}})$.
		Therefore $c_{2}(S^{\vee}|_{F_{b}})$ also belongs to the Beauville--Voisin
		ring.\footnote{The classes $c_{1}(S^{\vee}|_{F_{b}})$ and
			$c_{2}(S^{\vee}|_{F_{b}})$ are the classes that Claire Voisin calls $g$ and $c$
			respectively in \cite{MR2435839}.}
	\end{proof}
	We can now easily conclude\,:
	\begin{proof}[Proof of Theorem \ref{thm:mainBD}]
		Let $z$ be an element in $\CH^\ast(\rC)$.
		For any $b\in B^{\circ}$, thanks to Lemma \ref{lemma:Restriction}, $z|_{F_{b}}$
		is the restriction of some cycle from $G$, which must lie in the
		Beauville--Voisin ring $\langle c_{i}(F_{b}),  \Pic(F_{b})\rangle$ by Lemma
		\ref{lemma:InBV}. Now the equivalence between homological triviality and
		rational triviality of $z|_{F_{b}}$ is a consequence of Voisin's result
		\cite[Theorem 1.4(ii)]{MR2435839} saying that the cycle class map restricted to
		the Beauville--Voisin ring is injective. Finally, numerical equivalence and
		homological equivalence coincide for Fano varieties of lines of cubic fourfolds
		by \cite{MR3040747}.
	\end{proof}
	\begin{rmk}\label{rmk:TautologicalRing}
		In fact, the above proof shows that the restriction of a cycle $z\in
		\CH^\ast(\rC)$ to a fiber Fano variety of lines $F$ is in the so-called
		\emph{tautological} ring $R^{*}(F)$, which is the $\Q$-subalgebra of
		$\CH^{*}(F)$, in general smaller than the Beauville--Voisin ring, generated by
		the Pl\"ucker polarization class $g$ and the Chern classes of $F$. In
		particular, 
		\begin{itemize}
			\item $R^{1}(F)=\Q\cdot g$\,;
			\item $R^{2}(F)=\Q\cdot g^{2}\oplus \Q\cdot c_{2}$\,;
			\item $R^{3}(F)=\Q\cdot g^{3}$ (by \cite[Lemma 3.5]{MR2435839}, $gc_{2}$ and
			$g^{3}$ are proportional)\,;
			\item $R^{4}(F)=\Q\cdot \mathfrak{o}_{F}$, where $\mathfrak{o}_{F}$ is the
			canonical 0-cycle class and $c_{2}^{2}, c_{4}, g^{4}, g^{2}c_{2}$ are all
			proportional to it by \cite[Lemma 3.2]{MR2435839}. 
		\end{itemize}
	\end{rmk}

	\section{Hilbert squares of complete intersection K3
		surfaces}\label{sect:Hilb2K3}
	In this section, we prove Theorem \ref{thm:mainHilb2} for squares and Hilbert
	squares. There are three families of complete intersection K3 surfaces, namely,
	quartic surfaces in $\P^{3}$, complete intersections of quadric and cubic
	hypersurfaces in $\P^{4}$ and complete intersections of three quadric
	hypersurfaces in $\P^{5}$.
	
	Let us fix some notations. In each of the three cases\,:
	\begin{itemize}
		\item $\P:=\P^{3}, \P^{4} \text{ \resp } \P^{5}$ is the ambient projective
		space\,;
		\item $E:=\rO_{\P}(4), \rO_{\P}(2)\oplus\rO_{\P}(3), \text{ \resp }
		\rO_{\P}(2)^{\oplus3}$ is the relevant vector bundle\,;
		\item  $B:= \P H^{0}(\P, E)$ is the parameter (projective) space and
		$B^{\circ}$ is the open subset parameterizing smooth K3 surfaces.
		\item $\rS\deff\left\{(x, [s]) \in \P\times B ~~\vert~~ s(x)=0\right\}$ is the
		universal family. 
	\end{itemize}
	
	We have therefore the two natural projections, where $p$ is clearly a projective
	bundle\,;
	\begin{equation}\label{diag:K3}
	\xymatrix{
		\rS \ar[r]^{p} \ar[d]_{\pi} & \P\\
		B&
	}
	\end{equation}
	Similarly, the relative square and the open complement of the relative diagonal
	in it fit into the following diagram
	\begin{equation}\label{diag:doubleK3}
	\xymatrix{
		\rS\times_{B}\rS\backslash{\Delta_{\rS/B}} \ar[r]^{q'} \ar@{^{(}->}[d]_{j} &
		\P\times \P\backslash \Delta_{\P}\ar@{^{(}->}[d] \\
		\rS\times_{B}\rS \ar[r]^{q\deff (p,p)} \ar[d]_{\pi_{2}\deff (\pi, \pi)} &
		\P\times \P\\
		B&
	}
	\end{equation}
	Note that although $q$ itself is not a projective bundle, its restriction $q'$
	is. Let $\xi$ be the first Chern class of $\rO_{q'}(1)$. The relative diagonal
	$\Delta_{\rS/B}$ being of codimension 2, $\xi$ extends uniquely to the whole of
	$\rS\times_{B}\rS$, which we still denote by $\xi$ by abuse of notation.
	
	We can show the analogue of Lemma \ref{lemma:Restriction} in our
	case\footnote{Proposition \ref{prop:Restriction} will be generalized for the
		so-called stratified projective bundle in \S \ref{subsect:SPB}.} \,:
	\begin{prop}\label{prop:Restriction}
		For any $b\in B^{\circ}$, we have\,:
		$$\im\left(\CH^\ast(\rS\times_{B}\rS)\to \CH^\ast(S_{b}\times
		S_{b})\right)=\im\left(\CH^\ast(\P\times \P)\to \CH^\ast(S_{b}\times
		S_{b})\right)+ \Delta_{*}\im\left(\CH^\ast(\P)\to \CH^\ast(S_{b})\right),$$
		where $\Delta: S_{b}\inj S_{b}\times S_{b}$ is the diagonal embedding.
	\end{prop}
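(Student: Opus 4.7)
The inclusion $\supseteq$ follows by direct computation: pullback from $\P \times \P$ via $q$ restricts on a fiber to pullback from $\P \times \P$; pullback from $\P$ via $p$, pushed forward along $i \deff \Delta_{\rS/B} \hookrightarrow \rS \times_B \rS$, restricts on $S_b \times S_b$ to $\Delta_*$ of the restriction to $S_b$, by base change along the Cartesian square relating the relative and absolute diagonals.

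For $\subseteq$, the plan is to combine a localization argument with the projective bundle formula. Since $E$ separates points in each of the three K3 cases, the evaluation map $H^0(\P, E) \otimes \rO_{\P \times \P} \to p_1^* E \oplus p_2^* E$ is surjective away from $\Delta_\P$, so $q' \deff q|_U \colon U \deff \rS \times_B \rS \setminus \Delta_{\rS/B} \to \P \times \P \setminus \Delta_\P$ is a projective bundle with tautological class $\xi|_U = c_1(\rO_{q'}(1))$. The projective bundle formula on $U$, the surjection $\CH^*(\P \times \P) \twoheadrightarrow \CH^*(\P \times \P \setminus \Delta_\P)$, and the localization exact sequence
$$\CH^*(\rS) \cong \CH^*(\Delta_{\rS/B}) \xrightarrow{i_*} \CH^*(\rS \times_B \rS) \xrightarrow{j^*} \CH^*(U) \to 0$$
together yield, for any $z \in \CH^*(\rS \times_B \rS)$, a decomposition $z = \sum_{k \geq 0} q^*(\tilde\alpha_k) \cdot \xi^k + i_*(w)$ with $\tilde\alpha_k \in \CH^*(\P \times \P)$ and $w \in \CH^*(\rS)$.

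The main technical point is to identify $\xi$ globally on $\rS \times_B \rS$ and to show that it vanishes on every fiber $S_b \times S_b$. A point of $\rS$ above $x \in \P$ is a section $[s] \in B$ with $s(x) = 0$, and the tautological sub-line bundle $\rO_p(-1) \subset H^0(\P, E) \otimes \rO_\rS$ has fiber $\C \cdot s$ at $(x, [s])$; likewise a point of $U$ above $(x, y)$ corresponds to $[s]$ with $s(x) = s(y) = 0$, and $\rO_{q'}(-1)$ has the same fiber $\C \cdot s$. This gives the equality $\rO_{q'}(1) \isom \pi_1^* \rO_p(1)$ on $U$, which extends to all of $\rS \times_B \rS$ by codimension-$2$ uniqueness, yielding $\xi = \pi_1^* \zeta$ globally, with $\zeta \deff c_1(\rO_p(1))$. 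Now the restriction of $\rO_p(-1)$ to $S_b = \{(x, [s_b]) \in \rS\}$ is the constant sub-line bundle $\C \cdot s_b \otimes \rO_{S_b}$ of the trivial bundle $H^0(\P, E) \otimes \rO_{S_b}$, hence trivial; so $\zeta|_{S_b} = 0$ and $\xi|_{S_b \times S_b} = 0$.

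Restricting the decomposition of $z$ to $S_b \times S_b$ now kills every term with $k \geq 1$, leaving $\tilde\alpha_0|_{S_b \times S_b} \in \im(\CH^*(\P \times \P) \to \CH^*(S_b \times S_b))$. For the residual $i_*(w)$, apply the projective bundle formula for $p \colon \rS \to \P$ to write $w = \sum_k p^*(\beta_k) \cdot \zeta^k$ with $\beta_k \in \CH^*(\P)$: the same vanishing $\zeta|_{S_b} = 0$ gives $w|_{S_b} = \beta_0|_{S_b} \in \im(\CH^*(\P) \to \CH^*(S_b))$, and base change on the diagonal Cartesian square produces $i_*(w)|_{S_b \times S_b} = \Delta_*(w|_{S_b})$. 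The principal obstacle is the global identification $\xi = \pi_1^* \zeta$ together with the key observation that $\zeta$ vanishes on every K3 fiber; once these are in hand, the rest is routine localization and projective bundle manipulation.
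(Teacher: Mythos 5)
Your proposal is correct and follows essentially the same route as the paper's proof: the projective bundle formula for $q'$ on the complement of the relative diagonal, the localization sequence to absorb the error term into $\Delta_*\CH^\ast(\rS)$, the projective bundle formula for $p\colon\rS\to\P$ to handle that term, and the key observation that the relative $\rO(1)$ classes restrict to zero on fibers. The only (harmless) difference is that you identify $\xi$ exactly as $\pi_2^*(h)$ via the tautological sub-line bundles, whereas the paper only records that $\xi$ equals $j^*\pi_2^*(h)$ up to a class pulled back from $\P\times\P\setminus\Delta_\P$.
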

	\begin{proof}
		Notation is as in Diagrams (\ref{diag:K3}) and (\ref{diag:doubleK3}). By
		base-change, it is easy to see that the right-hand side is contained in the
		left-hand side. Concerning the inverse inclusion,
		the projective bundle formula gives, for any $z\in
		\CH^\ast(\rS\times_{B}\rS)_{}$, $$j^{*}(z)=\sum_{k\geq 0} q'^{*}(z_{k})\cdot
		\xi^{k},$$ for some cycles $z_{k}\in \CH^\ast(\P\times\P\backslash
		\Delta_{\P})$. As in Lemma \ref{lemma:Restriction}, it is easy to see that
		$\xi=j^{*}\pi_{2}^{*}(h)+q'^{*}(\alpha)$, where $h=c_{1}(\rO_{B}(1))$ and
		$\alpha\in \CH^\ast(\P\times \P\backslash\Delta_{\P})$.
		For each $k$, we denote still by $z_{k}\in \CH^\ast(\P\times \P)$ its closure
		and similarly for $\alpha$.
		Therefore, we have
		$$z-\sum_{k}q^{*}(z_{k})\cdot\left(\pi^{*}_{2}(h)-q^{*}(\alpha)\right)^{k}\in
		\Ker(j^{*}).$$
		By the localization sequence, there exists $\gamma\in \CH^\ast(\rS)$, such that 
		\begin{equation}\label{eqn:z}
		z-\sum_{k}q^{*}(z_{k})\cdot\left(\pi^{*}_{2}(h)-q^{*}(\alpha)\right)^{k}=\Delta_{*}(\gamma),
		\end{equation}
		where $\Delta: \rS\inj \rS\times_{B}\rS$ is the diagonal embedding.
		
		Since $p: \rS\to \P$ is also a projective bundle with
		$c_{1}(\rO_{p}(1))=\pi^{*}(h)$, we have
		$$\gamma=\sum_{l}p^{*}(\gamma_{l})\cdot\pi^{*}(h)^{l},$$ for some $\gamma_{l}\in
		\CH^\ast(\P)$. Substituting this into (\ref{eqn:z}), we get
		\begin{equation}\label{eqn:z2}
		z=\sum_{k}q^{*}(z_{k})\cdot\left(\pi^{*}_{2}(h)-q^{*}(\alpha)\right)^{k}+\sum_{l}\Delta_{*}(p^{*}(\gamma_{l})\cdot\pi^{*}(h)^{l}).
		\end{equation}
		Now for any $b\in B^{\circ}$, the restriction $z|_{S_{b}\times S_{b}}$ is of the
		desired form simply because the restrictions of $\pi_{2}^{*}(h)$ and $p^{*}(h)$
		to the fibers vanish.
	\end{proof}
	
	We can now prove the first two parts of Theorem \ref{thm:mainHilb2}.
	\begin{proof}[Proof of Theorem \ref{thm:mainHilb2} for relative squares]
		Keep the same notations as before. Thanks to Proposition~\ref{prop:Restriction},
		we only need to show that for any  smooth complete intersection K3 surface
		$S\subset \P$, the cycle class map restricted to 
		\[ \im\left(\CH^\ast(\P\times \P)\to \CH^\ast(S\times S)\right)+
		\Delta_{*}\im\left(\CH^\ast(\P)\to \CH^\ast(S)\right)\] 
		is injective. 
		Denote $H\deff c_{1}(\rO_{\P}(1))$ and $h\deff H|_{S}$. Since $\CH^\ast(\P\times
		\P)$ is generated by $\pr_{1}^{*}(H)$ and $\pr_{2}^{*}(H)$, and
		$\Delta_{*}(h)=h\times \mathfrak{o}_{S}+\mathfrak{o}_{S}\times h$ (see
		\cite{BVK3}), it is enough to show that the cycle class map of $S\times S$
		restricted to the subalgebra generated by $\pr_{1}^{*}(h), \pr_{2}^{*}(h)$ and
		$\Delta$ is injective. This is the easiest case of Voisin's \cite[Proposition
		2.2]{MR2435839}.
	\end{proof}
	
	\begin{proof}[Proof of Theorem \ref{thm:mainHilb2} for relative Hilbert squares]
		Consider the blow-up of $\rS^{\circ}\times_{B^{\circ}}\rS^{\circ}$ along the
		relative diagonal $\Delta_{\rS^{\circ}/{B^{\circ}}}$\,; the natural involution
		switching the two factors lifts to the blow-up. It is well-known that the
		Hilbert square is the quotient of this lifted involution and that
		$$\CH^{*}(\Hilb^{2}_{B^{\circ}}(\rS^{\circ}))\isom
		\CH^{*}(\Bl_{\Delta}\left(\rS^{\circ}\times_{B^{\circ}}\rS^{\circ}\right))^{inv}\isom
		\CH^{*}(\rS^{\circ}\times_{B^{\circ}}\rS^{\circ})^{inv}\oplus
		\CH^{*-1}(\rS^{\circ}),$$
		where all isomorphisms are compatible with the restriction to the fibers.
		Therefore, for any $b\in B^{\circ}$, the restriction $z|_{S_{b}^{[2]}}$ of any
		$z\in \CH^{*}(\Hilb^{2}_{B^{\circ}}(\rS^{\circ}))$ to the fiber over $b$, viewed
		as an element in $\CH^{*}(S_{b}\times S_{b})^{inv}\oplus \CH^{*-1}(S_{b})$,
		lives in 
		$\im(\CH^\ast(\rS^{\circ}\times_{B^{\circ}}\rS^{\circ})^{inv}\to
		\CH^{*}(S_{b}\times S_{b})^{inv})\oplus \im(\CH^{*-1}(\rS^{\circ})\to
		\CH^{*-1}(S_{b}))$. We can thus conclude thanks to the established cases of the
		Franchetta property for the relative squares
		$\rS^{\circ}\times_{B^{\circ}}\rS^{\circ}$ and for $\rS^{\circ}$.
	\end{proof}

	\section{Some more cases of Hilbert schemes of K3
		surfaces}\label{sect:HilbK3more}
	In this section, we push the results and methods of \S  \ref{sect:Hilb2K3} to
	higher (Hilbert) powers and to K3 surfaces of higher genera. Let us first
	provide some technical tool for that purpose.
	
	\subsection{Stratified projective bundles}\label{subsect:SPB}
	As one can observe, Lemma \ref{lemma:Restriction} and Proposition
	\ref{prop:Restriction} (but also Proposition \ref{prop:RestIm} below) share some
	similarity. 
	The goal of this technical subsection is to summarize these situations.
	\begin{defi}[Stratified projective bundle]\label{def:SPB}
		A projective morphism $q:\rX\to Y$ is called a \emph{stratified projective
			bundle} if there exists a commutative cartesian diagram
		\begin{equation}\label{diag:Stratification}
		\xymatrix{
			\rX_{r}\ar@{^{(}->}[r]\ar[d]^{q_{r}}\cart& \cdots \ar@{^{(}->}[r] \cart&
			\rX_{1}\ar@{^{(}->}[r] \ar[d]^{q_{1}} \cart& \rX_{0}=\rX \ar[d]^{q_{0}=q}\\
			Y_{r} \ar@{^{(}->}[r]& \cdots \ar@{^{(}->}[r]& Y_{1}\ar@{^{(}->}[r]& Y_{0}=Y
		}
		\end{equation}
		where all horizontal morphisms are closed immersions, such that for any $0\leq
		i\leq r$, the restriction of $q_{i}$ $$q'_{i}: \rX_{i}\backslash \rX_{i+1}\to
		Y_{i}\backslash Y_{i+1}$$ is a projective bundle ($\rX_{r+1}=Y_{r+1}=\vide$).
		The above diagram is called a \emph{stratification} of $q$.
	\end{defi}
	
	Now we can state the following generalization of Lemma \ref{lemma:Restriction}
	and Proposition \ref{prop:Restriction} (see also Proposition \ref{prop:RestIm}
	for an example).
	\begin{prop}\label{prop:SPB}
		Let $q:\rX\to Y$ be a stratified projective bundle with a given stratification
		(\ref{diag:Stratification}) and $\pi: \rX\to B$ be a surjective morphism. Assume
		moreover that for any $0\leq i\leq r$, $Y_{i}$ is smooth projective, $\rX_{i}$
		is flat over a  (common) smooth Zariski open subset $B^{\circ}\subset B$,
		$\codim_{\rX_{i}}(\rX_{i+1})\geq 2$ and finally that
		there exists a line bundle on $B$ whose restriction to fibers of the projective
		bundle $q_{i}'$ is non-trivial. Then for any $b\in B^{\circ}$
		$$\im\left(\CH^\ast(\rX)\to
		\CH^\ast(X_{b})\right)=\sum_{i=0}^{r}\iota_{i*}\im\left(q_{i,b}^{*}:
		\CH^\ast(Y_{i})\to \CH^\ast(X_{i,b})\right),$$
		where $X_{b}$ (\resp $X_{i,b}$) is the fiber of $\rX$ (\resp the Zariski closure
		of $\rX_{i}\backslash \rX_{i+1}$) over $b$, $\iota_{i}: X_{i,b}\inj X_{b}$ is
		the natural inclusion and $q_{i,b}$ is the restriction of $q_{i}$ to $X_{i,b}$.
	\end{prop}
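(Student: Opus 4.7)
My plan is to proceed by induction on the length $r$ of the stratification, directly generalising the arguments of Lemma \ref{lemma:Restriction} and Proposition \ref{prop:Restriction}. The inclusion $\supseteq$ is immediate: for $\alpha \in \CH^\ast(Y_i)$, the class $\iota_{i*} q_i^\ast(\alpha) \in \CH^\ast(\rX)$ restricts on the fiber $X_b$ to $\iota_{i,b*} q_{i,b}^\ast(\alpha)$ via the base-change identity $j_b^\ast \circ \iota_{i*} = \iota_{i,b*} \circ j_{i,b}^\ast$, which is available thanks to the flatness of $\rX_i$ over $B^\circ$. In the base case $r = 0$, the morphism $q_0$ is itself a projective bundle and the argument below specialises cleanly to give the claim directly.

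For the inductive step, let $z \in \CH^\ast(\rX)$ and let $j_0 \colon \rX \setminus \rX_1 \hookrightarrow \rX$ be the open embedding. The projective bundle formula applied to $q_0' \colon \rX \setminus \rX_1 \to Y \setminus Y_1$ gives $j_0^\ast(z) = \sum_k q_0'{}^\ast(w_k) \cdot \xi_0^k$ with $w_k \in \CH^\ast(Y \setminus Y_1)$ and $\xi_0 = c_1(\rO_{q_0'}(1))$. The critical move, generalising the identity $\xi = j^\ast \pi_2^\ast h + q'^\ast \alpha$ from the proof of Proposition \ref{prop:Restriction}, is to exhibit $\xi_0$ as a sum of a class pulled back from $B$ and a class pulled back from $Y \setminus Y_1$: using the line bundle $L_0$ on $B$ provided by the hypothesis, whose restriction to each fiber of $q_0'$ has some nonzero degree $m_0$, and the fact that the relative Picard group of $q_0'$ is infinite cyclic generated by $\xi_0$, one gets an identity $\xi_0 = \tfrac{1}{m_0}\,c_1(\pi^\ast L_0)|_{\rX \setminus \rX_1} - q_0'{}^\ast(\gamma_0)$ in $\CH^1(\rX \setminus \rX_1)$ for some $\gamma_0 \in \CH^1(Y \setminus Y_1)$. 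Choose arbitrary extensions $\bar w_k \in \CH^\ast(Y)$ and $\bar\gamma_0 \in \CH^1(Y)$ (which exist by right-exactness of localization), set $\bar\xi_0 := \tfrac{1}{m_0}\,c_1(\pi^\ast L_0) - q_0^\ast(\bar\gamma_0)$ and $\tilde z := \sum_k q_0^\ast(\bar w_k) \cdot \bar\xi_0^k$; then $j_0^\ast(z - \tilde z) = 0$, so the localization exact sequence $\CH^\ast(\rX_1) \to \CH^\ast(\rX) \to \CH^\ast(\rX \setminus \rX_1) \to 0$ yields $z - \tilde z = \iota_{1*}(u)$ for some $u \in \CH^\ast(\rX_1)$.

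Restricting to the fiber over $b \in B^\circ$ then closes the induction. All classes pulled back from $B$ of positive codimension vanish on $X_b$, so $\bar\xi_0|_{X_b}$ is pulled back from $Y$ and $\tilde z|_{X_b}$ lies in $\im(q_{0,b}^\ast \colon \CH^\ast(Y) \to \CH^\ast(X_b))$, contributing the $i=0$ term. Meanwhile $(\iota_{1*} u)|_{X_b} = \iota_{1,b*}(u|_{X_{1,b}})$ by base change, and the inductive hypothesis applied to $q_1 \colon \rX_1 \to Y_1$ with its inherited stratification of length $r - 1$ expresses $u|_{X_{1,b}}$ as a sum over $i \geq 1$ of terms of the required form; composing with $\iota_{1,b*}$ completes the induction since $\iota_{1,b} \circ (X_{i,b} \hookrightarrow X_{1,b}) = \iota_{i,b}$ for $i \geq 1$. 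The main obstacle I expect is verifying that all the base-change identities hold in the required generality; the flatness of each $\rX_i$ over $B^\circ$ supplies the Tor-independence needed for $j_b^\ast \circ \iota_{i*} = \iota_{i,b*} \circ j_{i,b}^\ast$, while the codimension-$\geq 2$ hypothesis on $\rX_{i+1} \subset \rX_i$ ensures that the fibers $X_{i,b}$ have the expected dimension and do not acquire spurious components from the deeper strata.
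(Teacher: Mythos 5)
Your proposal is correct and follows essentially the same route as the paper's proof: induction on the length of the stratification, the projective bundle formula on the open stratum, writing the relative hyperplane class as a combination of a pullback from $B$ (via the hypothesized line bundle) and a pullback from $Y\setminus Y_1$, extending via the localization sequence, and then restricting to the fiber where the $B$-pullbacks die and flat base change hands the closed-stratum contribution to the inductive hypothesis for $q_1\colon \rX_1\to Y_1$. The only differences are cosmetic (your explicit $1/m_0$ normalization and the remark on Tor-independence), so there is nothing to add.
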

	\begin{proof}
		Since the $\rX_{i}$'s are flat over $B^{\circ}$, by base-change, the right-hand
		side is clearly contained in the left-hand side. We use induction on $r$ to
		prove the other inclusion.
		For any $z\in \CH^\ast(\rX)$, the projective bundle formula shows that
		$$j^{*}(z)=\sum_{k\geq 0} q_{0}'^{*}(z_{k})\cdot \xi^{k},$$ for some cycles
		$z_{k}\in \CH^\ast(Y_{0}\backslash Y_{1})$ where $j: \rX\backslash \rX_{1}\inj
		\rX$ is the open immersion and $\xi=c_{1}(\rO_{q'_{0}}(1))$. By hypothesis,
		$\xi=j^{*}\pi^{*}(h)+{q'_{0}}^{*}(\alpha)$, where $h$ is a divisor on $B$ and
		$\alpha\in \CH^\ast(Y_{0}\backslash Y_{1})$.
		We extend $z_{k}$ and $\alpha$ to $Y_{0}$, keeping the same notation for the
		classes on $Y_0$. Therefore 
		$$z-\sum_{k}q^{*}(z_{k})\cdot\left(\pi^{*}(h)-q^{*}(\alpha)\right)^{k}\in
		\Ker(j^{*}).$$
		By the localization sequence, there exists $\gamma\in \CH^\ast(\rX_{1})$, such
		that 
		\begin{equation}\label{eqn:znew}
		z=\sum_{k}q^{*}(z_{k})\cdot\left(\pi^{*}(h)-q^{*}(\alpha)\right)^{k}+\iota_{*}(\gamma),
		\end{equation}
		where $\iota: \rX_{1}\inj \rX$ is the natural inclusion.
		
		Noting that the restriction of $\pi^{*}(h)$ to $X_{b}$ vanishes, we have that 
		$$z|_{X_{b}}\in \im\left( q^{*}: \CH^\ast(Y)\to
		\CH^\ast(X_{b})\right)+\im\left(\iota_{*}: \CH^\ast(\rX_{1})\to CH^\ast
		(\rX)\right)|_{X_{b}},$$
		where the second term is $\iota_{1,*}\im\left(\CH^\ast(\rX_{1})\to
		\CH^\ast(X_{1,b})\right)$ by flat base-change. Observing that $q_{1}:\rX_{1}\to
		Y_{1}$ is again a stratified projective bundle verifying all the conditions, the
		induction hypothesis allows us to conclude.
	\end{proof}
	
	\subsection{Cubes and Hilbert cubes of complete intersection K3
		surfaces}\label{subsect:Hilb3K3}
	We prove Theorem \ref{thm:mainHilb2} for cubes and Hilbert cubes in this
	subsection. Notation is as in \S \ref{sect:Hilb2K3}. 
	
	The geometry is quite close\footnote{In fact, complete intersection K3 surfaces
		are special cases of Calabi--Yau complete intersections considered in
		\cite{MR3077892} and so all results in \loccit apply.} to the one considered in
	\cite{MR3077892}, in particular, we will study collinear triples in the
	projective space $\P$. For three points in $\P$ there are four types of relative
	positions\,:  non-collinear, collinear and distinct, two coincide but not with
	the third, all coincide. 
	As a result, the evaluation map of the relative cube of the universal family
	\begin{equation*}
	q: \rS\times_{B}\rS\times_{B}\rS\to \P\times \P\times \P
	\end{equation*}
	is not a projective bundle but is a \emph{stratified} projective bundle
	(Definition \ref{def:SPB}) with the following stratification\,:
	\begin{equation}\label{diag:Strat}
	\xymatrix{
		\rS=\delta_{\rS/B} \ar@{^{(}->}[r] \ar[d]^{p} \cart& \Delta_{12}\cup
		\Delta_{13}\cup \Delta_{23} \cart\ar[d]\ar@{^{(}->}[r]& \Delta_{12}\cup
		\Delta_{13}\cup \Delta_{23} \cup \rI \cart \ar[d]\ar@{^{(}->}[r]
		&\rS\times_{B}\rS\times_{B}\rS \ar[r]^-{\pi_{3}} \ar[d]^{q}& B\\
		\P=\delta_{P} \ar@{^{(}->}[r] & \Delta_{12}\cup \Delta_{13}\cup \Delta_{23}
		\ar@{^{(}->}[r]& J \ar@{^{(}->}[r] & \P\times\P\times\P& 
	}
	\end{equation}
	where in the first row, $\Delta_{i,j} : \rS\times_{B}\rS \inj
	\rS\times_{B}\rS\times_{B}\rS$ are the three big (relative) diagonals and $\rI$
	is the Zariski closure of
	$$\rI^{\circ}:=\left\{(x,y,z)\in\rS\times_{B}\rS\times_{B}\rS ~~\vert~~ x,y,z~
	\text{collinear and distinct}\right\}\,;$$ in the second row, $\Delta_{i,j}:
	\P\times \P\inj \P\times \P\times \P$ are the three big diagonals and
	$$J:=\left\{(x,y,z)\in \P\times \P\times \P~~\vert~~ x,y,z~
	\text{collinear}\right\}.$$
	
	\begin{prop}\label{prop:RestrIm3}
		We have for any $b\in B^{\circ}$
		\begin{eqnarray*}
			&&\im\left(\CH^\ast(\rS\times_{B}\rS\times_{B}\rS)\to \CH^\ast(S_{b}\times
			S_{b}\times S_{b})\right)\\&=&\im\left(\CH^\ast(\P\times \P\times \P)\to
			\CH^\ast(S_{b}\times S_{b}\times S_{b})\right)\\&&+ \sum_{1\leq
				i<j\leq3}{\Delta_{i,j}}_{*}\im\left(\CH^\ast(\P\times \P)\to
			\CH^\ast(S_{b}\times S_{b})\right)\\ &&+ \delta_{*}\im\left(\CH^\ast(\P)\to
			\CH^\ast(S_{b})\right),
		\end{eqnarray*}
		where $\Delta_{i,j}: S_{b}^{2}\inj S_{b}^{3}$ are the inclusions of the big
		diagonals and $\delta: S_{b}\inj S_{b}^{3}$ is the inclusion of the small
		diagonal.
	\end{prop}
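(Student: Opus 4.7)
The plan is to apply Proposition~\ref{prop:SPB} to the evaluation map $q: \rS\times_{B}\rS\times_{B}\rS \to \P\times\P\times\P$ endowed with the stratification of diagram~\eqref{diag:Strat}, imitating the argument of Proposition~\ref{prop:Restriction} but iterated through four strata instead of two. The key geometric input is that the fiber of $q$ over a triple $(x,y,z)$ is the projective linear system of sections of $E$ vanishing at those three points, and one verifies case-by-case for each of the three K3 families (quartics in $\P^{3}$, $(2,3)$-intersections in $\P^{4}$, $(2,2,2)$-intersections in $\P^{5}$) that this linear system has locally constant dimension on each of the four strata: non-collinear distinct triples, collinear distinct triples (the open part of $\rI$), triples with exactly two coincident points (the open parts of the big diagonals), and the small diagonal.

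I would then proceed by iteration, exactly as in the proof of Proposition~\ref{prop:Restriction}. On the open stratum the projective bundle formula expresses any $z$ as a polynomial in $\xi = c_{1}(\mathcal{O}_{q'}(1))$ with coefficients pulled back from $\P\times\P\times\P\setminus J$; using the identity $\xi = j^{*}\pi_{3}^{*}(h) + q'^{*}(\alpha)$ for some divisor $h$ on $B$ and some $\alpha$ pulled back from $\P\times\P\times\P$, and then invoking the localization exact sequence to extend to closures, I identify $z$ modulo $\iota_{\rX_{1},*}\CH^{*}(\rX_{1})$ with a pullback from $\P\times\P\times\P$. Restriction to $S_{b}^{3}$ kills the $B$-pullback $h$ and yields the first term of the claim. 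For the boundary supported on the three big diagonals $\Delta_{ij}\cong \rS\times_{B}\rS$, Proposition~\ref{prop:Restriction} applies directly and produces the second term together with the third term (via its own small-diagonal contribution).

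The main obstacle will be handling the contribution from the remaining component $\rI$ of $\rX_{1}$, since no corresponding term appears on the right-hand side of the claim. My plan is to continue the iterative argument on $\rI$, itself a stratified projective bundle over $J$, so as to reduce to cycles pulled back from $J$; then to invoke flat base change in the cartesian square of $\rI\hookrightarrow \rS\times_{B}\rS\times_{B}\rS$ lying over $J\hookrightarrow \P\times\P\times\P$ in order to rewrite $\iota_{\rI,*}\circ(q|_{\rI})^{*}$ as $q^{*}\circ\iota^{J}_{*}$. This will show that the $\rI$-contribution equals the restriction of $q^{*}\iota^{J}_{*}\CH^{*}(J)$ to $S_{b}^{3}$, hence lies in $\im(\CH^{*}(\P\times\P\times\P)\to\CH^{*}(S_{b}^{3}))$ and is absorbed into the first term. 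Verifying flatness of $q$ on the open stratum of collinear distinct triples, which is needed for base change to apply, is precisely the fiber-dimension computation of the first step.
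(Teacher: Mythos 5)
Your overall route is the paper's: apply Proposition \ref{prop:SPB} to the stratified projective bundle of diagram (\ref{diag:Strat}), handle the big-diagonal strata via Proposition \ref{prop:Restriction}, and show that the stratum $\rI$ lying over $J$ contributes nothing new. The gap is in this last step. The square you invoke is not cartesian: the fibre product $(\rS\times_{B}\rS\times_{B}\rS)\times_{\P\times\P\times\P}J=q^{-1}(J)$ equals $\Delta_{12}\cup\Delta_{13}\cup\Delta_{23}\cup\rI$, not $\rI$; moreover $q$ is not flat over $J$, since the fibre dimension jumps over the diagonal locus of $J$ (two coincident points impose only $\rank(E)$ conditions instead of $2\rank(E)$). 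Flatness over the locus of \emph{distinct} collinear triples --- which is all your fibre-dimension computation gives --- does not yield the identity $\iota_{\rI,*}\circ(q|_{\rI})^{*}=q^{*}\circ\iota^{J}_{*}$, and that identity is in fact false: already $q_{b}^{*}\iota^{J}_{*}[J]$ equals $[I_{b}]$ plus nonzero contributions supported on the $\Delta_{i,j}$, because each $\Delta_{i,j}$ is a separate component of $S_{b}^{3}\cap q_{b}^{-1}(J)$ (of the proper dimension for quartics, of excess dimension $\dim\P-3$ for genus $4$ and $5$). These correction terms have the form ${\Delta_{i,j}}_{*}(\hbox{class on }S_{b}^{2})$, and since the diagonal of a K3 surface is not a polynomial in $h_{1},h_{2}$ (its K\"unneth component in $H^{2}\otimes H^{2}$ is the identity of a $22$-dimensional space), they do \emph{not} lie in $\im\left(\CH^\ast(\P\times\P\times\P)\to\CH^\ast(S_{b}^{3})\right)$. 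So your intermediate claim that the $\rI$-contribution is absorbed into the first term alone is incorrect; it is absorbed into the first term plus the big-diagonal term, and this needs an argument.

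The paper closes exactly this gap with the excess intersection formula (\cite[\S 6.3]{MR1644323}) applied to the cartesian square with $\Delta_{12}\cup\Delta_{13}\cup\Delta_{23}\cup I_{b}$ on top of $J$: the intersection is transverse along $I_{b}$ away from the diagonals, while along $\Delta_{i,j}$ the excess normal bundle is computed explicitly and its top Chern class is a polynomial in $h_{1},h_{2}$, hence pulled back from $\P\times\P$; therefore $\iota_{*}\im\left(\CH^\ast(J)\to\CH^\ast(I_{b})\right)$ is contained in the sum of the $\P\times\P\times\P$ term and the big-diagonal term, and is redundant. Alternatively, your idea can be repaired without excess intersection: flat base change over the open complement of the diagonals shows that $\iota_{\rI,*}(q|_{\rI})^{*}\alpha-q^{*}\iota^{J}_{*}\alpha$ restricts to zero there, so by the localization sequence it equals $\sum_{i<j}{\Delta_{i,j}}_{*}(\gamma_{ij})$ for some $\gamma_{ij}\in\CH^\ast(\rS\times_{B}\rS)$, and the restriction of such a class to $S_{b}^{3}$ lands in the last two terms of the right-hand side by Proposition \ref{prop:Restriction}. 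One of these two repairs is needed; as written the step does not go through.
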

	\begin{proof}
		It is straight-forward to check that (\ref{diag:Strat}) indeed stratifies $q$
		into projective bundles and that  the codimension of $\rI$ in
		$\rS\times_{B}\rS\times_{B}\rS$ is $\dim(\P)-1$ (\cf \cite[Lemma
		1.2]{MR3077892}), which is $\geq 2$. Moreover, it is clear that
		$\pi_{3}^{*}\rO_{B}(1)$ restricts to the relative ample tautological line bundle
		on fibers of all projective bundles. All assumptions of Proposition
		\ref{prop:SPB} being satisfied, it implies that for any $b\in B^{\circ}$
		\begin{eqnarray*}
			&&\im\left(\CH^\ast(\rS\times_{B}\rS\times_{B}\rS)\to \CH^\ast(S_{b}\times
			S_{b}\times S_{b})\right)\\&=&\im\left(\CH^\ast(\P\times \P\times \P)\to
			\CH^\ast(S_{b}\times S_{b}\times S_{b})\right)+\iota_{*}\im\left(\CH^\ast(J)\to
			\CH^\ast(I_{b})\right)\\
			&&+ \sum_{1\leq i<j\leq3}{\Delta_{i,j}}_{*}\im\left(\CH^\ast(\P\times \P)\to
			\CH^\ast(S_{b}\times S_{b})\right) + \delta_{*}\im\left(\CH^\ast(\P)\to
			\CH^\ast(S_{b})\right),
		\end{eqnarray*}
		where $\iota: I_{b}\inj S_{b}^{3}$ is the inclusion of the Zariski closure of
		the locus of collinear and distinct triples. We only have to show that the
		second term on the right-hand side is redundant. Indeed, for any $b\in
		B^{\circ}$, consider the cartesian square
		\begin{equation*}
		\xymatrix{
			\Delta_{12}\cup \Delta_{13}\cup \Delta_{23}\cup I_{b} \ar@{^{(}->}[r]
			\ar@{^{(}->}[d]  \cart& S_{b}\times S_{b}\times S_{b}\ar@{^{(}->}[d]\\
			J \ar@{^{(}->}[r]& \P\times\P\times \P
		}
		\end{equation*}
		Here
		the intersection is transversal along $I_{b}\backslash\cup\Delta_{i,j}$ (without
		excess intersection) and $\codim_{S_{b}^{3}} I_{b}=\codim_{\P^{\times
				3}}(J)=\dim P-1$, while along $\Delta_{i,j}$ the intersection has excess
		dimension $\dim \P-3$ (\cf \cite[Lemma 1.2]{MR3077892}) with excess normal
		bundle $\frac{\pr_{1}^{*}(E|_{S_{b}})}{\rO(1)\boxtimes \rO(-1)}$ (\cf
		\cite[Lemma 1.5]{MR3077892})\footnote{So there is no excess intersection in the
			case of quartic surfaces.}. The excess intersection class on
		$\Delta_{i,j}=S_{b}\times S_{b}$ is therefore a polynomial in $h_{1}$ and
		$h_{2}$ with $h_{i}:=\pr_{i}^{*}(c_{1}(\rO(1)|_{S_{b}}))$, hence is the
		pull-back of an element in $\CH^\ast(\P\times \P)$. As a result, by the excess
		intersection formula (\cf \cite[\S 6.3]{MR1644323}) applied to the above
		cartesian square, any element in the second term 
		$\iota_{*}\im\left(\CH^\ast(J)\to \CH^\ast(I_{b})\right)$, up to an element in
		the third term $\sum_{1\leq
			i<j\leq3}{\Delta_{i,j}}_{*}\im\left(\CH^\ast(\P\times \P)\to
		\CH^\ast(S_{b}\times S_{b})\right)$, is an element in the first term
		$\im\left(\CH^\ast(\P\times \P\times \P)\to \CH^\ast(S_{b}\times S_{b}\times
		S_{b})\right)$, thus is redundant.
	\end{proof}
	
	We are now ready to prove the remaining cases of Theorem \ref{thm:mainHilb2}\,:
	\begin{proof}[Proof of Theorem \ref{thm:mainHilb2} for relative cubes]
		Denote by $h=c_{1}(\rO_{\P}(1)|_{S_{b}})$ and $h_{i}:=\pr_{i}^{*}(h)$. Thanks to
		Proposition \ref{prop:RestrIm3}, for any $z\in
		\CH^\ast(\rS\times_{B}\rS\times_{B}\rS)$ and any $b\in B^{\circ}$, the
		restriction $z|_{S_{b}\times S_{b}\times S_{b}}$ is a polynomial in $h_{1},
		h_{2}, h_{3}, \Delta_{12}, \Delta_{13}, \Delta_{23}$ (and
		$\delta=\Delta_{12}\Delta_{23}$). We can conclude by the $m=3$ case of Voisin's
		\cite[Proposition 2.2]{MR2435839}, where the essential point is the
		decomposition of the small diagonal $\delta$ due to Beauville--Voisin
		\cite[Proposition 3.2]{BVK3}.
	\end{proof}
	
	\begin{proof}[Proof of Theorem \ref{thm:mainHilb2} for relative Hilbert cubes]
		To simplify the notation, we denote by $S^{[m]}:=\Hilb^{m}(S)$ and similarly
		$\rS^{[m]/B}:=\Hilb^{m}_{B}\rS$.
		Let us first recall the result of de Cataldo--Migliorini \cite{MR1919155} in the
		special case of Chow groups of Hilbert cubes of surfaces\,: for any surface $S$,
		denote by $\rho: S^{[3]}\to S^{(3)}$ the Hilbert--Chow morphism which sends a
		0-dimensional subscheme to its support 0-cycle. We have the incidence
		subvarieties
		\begin{eqnarray*}
			U&:=&\left\{(z, x_{1}, x_{2}, x_{3})\in S^{[3]}\times S^{3}~~\vert~~
			\rho(z)=x_{1}+x_{2}+x_{3}\right\}\,;\\
			V&:=&\left\{(z, x_{1}, x_{2})\in S^{[3]}\times S^{2}~~\vert~~
			\rho(z)=2x_{1}+x_{2}\right\}\,;\\
			W&:=&\left\{(z, x)\in S^{[3]}\times S~~\vert~~ \rho(z)=3x\right\}\,;
		\end{eqnarray*}
		and the main result of \cite{MR1919155} says that together they induce an
		injective morphism
		$$(U_{*}, V_{*}, W_{*}): \CH^\ast(S^{[3]})\inj \CH^\ast(S^{3})\oplus
		\CH^\ast(S^{2})\oplus \CH^\ast(S).$$
		Note that the above correspondences have natural family counterparts, denoted by
		$\rU, \rV, \rW$.
		
		Let $z\in \CH^\ast(\rS^{[3]/B})$ be such that the cohomology class of
		$z|_{S_{b}^{[3]}}$ vanishes. By the above injectivity, it is enough to show that
		for any $b\in B^{\circ}$,  $U_{*}\left(z|_{S_{b}^{[3]}}\right)$,
		$V_{*}\left(z|_{S_{b}^{[3]}}\right)$ and $W_{*}\left(z|_{S_{b}^{[3]}}\right)$
		are zero. To this end, observe that
		$U_{*}\left(z|_{S_{b}^{[3]}}\right)=\rU_{*}(z)|_{S_{b}^{3}}$ is the restriction
		of a cycle of the total family $\rS\times_{B}\rS\times_{B}\rS$ with trivial
		cohomology class, hence is zero by the relative cube case of Theorem
		\ref{thm:mainHilb2} just proven. Similarly, the vanishing of
		$V_{*}\left(z|_{S_{b}^{[3]}}\right)$ and $W_{*}\left(z|_{S_{b}^{[3]}}\right)$
		follow from the relative square case proven in \S\ref{sect:Hilb2K3} and
		\cite{FranchettaK3} respectively.\\
		Finally, the proof of the case of $\rS\times_{B} \rS^{[2]/B}$ is similar (in
		fact, easier) by using the motivic decomposition for Hilbert squares.
	\end{proof}
	
	\subsection{Beyond complete intersection K3 surfaces}\label{subsect:beyond}
	The techniques we utilized above in order to prove Theorem \ref{thm:mainHilb2}
	for (Hilbert) squares and cubes of complete intersection K3 surfaces can also be
	employed to attack the generalized Franchetta conjecture \ref{conj:FranchettaHK}
	for families of K3 surfaces for which Mukai models are available. In this
	subsection, we give a sufficient condition for the Franchetta property to hold
	for Hilbert schemes of K3 surfaces in a certain range. It is convenient to
	introduce the following notion\,:
	
	\begin{defi}[Tautological ring]\label{def:TautoRing}
		Let $(S, H)$ be a polarized K3 surface and $r\in\N$. Denote $h:=c_{1}(H)\in
		\CH^{1}(S)$. The  \emph{tautological ring} $R^\ast(S^{r})$ is the subring of the
		(rational) Chow ring $\CH^\ast(S^{r})$ generated by the big diagonals
		$\Delta_{i,j}$ ($1\leq i<j \leq r$), the polarization classes
		$h_{i}:=\pr_{i}^{*}(h)$ and the Beauville--Voisin classes $\mathfrak
		o_{i}:=\pr_{i}^{*}(\mathfrak o_{S})$ ($1\leq i\leq r$).
	\end{defi}
	
	\begin{rmk}\label{rmk:Rpreserved}
		Using \cite[Proposition 2.6]{BVK3}, we see that the tautological rings of
		different powers of a K3 surface are closed under push-forwards and pull-backs
		along all kinds of (partial) diagonal inclusions.
	\end{rmk}
	
	Recall that for a natural number $g$, we say that a \emph{Mukai model} for K3
	surfaces of genus $g$ exists, if there exist an ambient homogeneous space 
	$G=G_{g}$ (often a Grassmannian) and a globally generated homogeneous vector
	bundle $E=E_{g}$ on $G$ such that the zero locus of a general section of $E$
	gives a general K3 surface of genus $g$. For the available constructions of
	Mukai models and the corresponding $G$ and $E$, we refer to \cite{FranchettaK3}
	as well as the original sources \cite{Mukai88}, \cite{Mukai89}, \cite{Mukai06},
	\cite{Mukai12}. Accordingly, we have a universal family
	\begin{equation*}
	\xymatrix{
		\rS \ar[r]^{p} \ar[d]_{\pi} & G\\
		B=H^0(G,E)&
	} 
	\end{equation*}
	and we denote $B^\circ \subset B$ the locus parameterizing smooth K3 surfaces of
	genus $g$.
	
	The crucial condition for our techniques to work is the following\,:
	\begin{defi}
		For an $r\in \N^{*}$, we say that the Mukai model $(G, E)$ satisfies the
		condition $(\star_{r})$ if
		
		$(\star_{r}) :$ for any $x_{1}, \cdots, x_{r}$ distinct points of $G$, the
		following evaluation map is surjective $$H^{0}(G, E)\to \bigoplus_{i=1}^{r}
		E_{x_{i}}.$$
		Or equivalently, $H^{0}(G, E\otimes I_{x_{1}}\otimes\cdots\otimes I_{x_{r}})$ is
		of codimension $r\cdot \rank(E)$ in $H^{0}(G, E)$.\\ Clearly, $(\star_{r})$
		implies $(\star_{k})$ for all $k<r$.
	\end{defi}

	\begin{prop}\label{prop:beyond}
		The notation is as above. Fix a genus $g$ for which a Mukai model exists for K3
		surfaces of genus $g$ and fix such a Mukai model which satisfies
		condition $(\star_{r})$. Assume that
		$$\im\left(\CH^\ast(\rS)\to \CH^\ast(S_{b})\right)=R^\ast(S_{b}),$$ for any
		$b\in B^{\circ}$.
		Then
		$$\im\left(\CH^\ast(\rS^{r/B})\to
		\CH^\ast(S_{b}^{r})\right)=R^\ast(S_{b}^{r}),$$ 
		for any $b\in B^{\circ}$.
	\end{prop}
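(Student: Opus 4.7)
The plan is to apply the stratified projective bundle formalism of Proposition \ref{prop:SPB} to the evaluation morphism $q \colon \rS^{r/B} \to G^r$ sending $((s, x_1), \ldots, (s, x_r))$ to $(x_1, \ldots, x_r)$. First I would set up the stratification of $G^r$ by closed subsets indexed by set partitions of $\{1, \ldots, r\}$: for each such partition $P$, let $\Delta_P \subset G^r$ be the partial diagonal on which coordinates indexed by the same block coincide. Order these by refinement, and convert to a linear filtration $G^r = Y_0 \supset Y_1 \supset \cdots$ by adding one partial diagonal at a time along a linear extension of the refinement order. The role of condition $(\star_r)$ is precisely to ensure that for any $k \leq r$ pairwise distinct points $x_1, \ldots, x_k \in G$, the fiber of $q$ is the projective space $\P H^0(G, E \otimes I_{x_1} \otimes \cdots \otimes I_{x_k})$ of constant dimension $\dim B - 1 - k \cdot \rank(E)$; thus over the locally closed stratum of $\Delta_P$ where blocks correspond to distinct points, $q$ restricts to a projective bundle. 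All strata have codimension at least $\dim G \geq 2$, and $\pi^*\rO_B(1)$ restricts non-trivially to each projective bundle fiber, matching the hypotheses of Proposition \ref{prop:SPB}.

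Executing this stratified analysis yields
\[
\im\bigl(\CH^*(\rS^{r/B}) \to \CH^*(S_b^r)\bigr) \;=\; \sum_{P} (\iota_P)_*\, \im\bigl(\CH^*(G^{|P|}) \to \CH^*(S_b^{|P|})\bigr),
\]
where $P$ ranges over set partitions of $\{1, \ldots, r\}$ and $\iota_P \colon S_b^{|P|} \hookrightarrow S_b^r$ is the inclusion of the partial diagonal encoding $P$. It then remains to identify this sum with $R^*(S_b^r)$. For the forward inclusion, use that $G$ is a cellular homogeneous variety, so $\CH^*(G^{|P|}) = \CH^*(G)^{\otimes |P|}$ is spanned by products of factorwise pullbacks; each single-factor pullback to $S_b$ factors through $\CH^*(\rS) \to \CH^*(S_b)$ (via $p \colon \rS \to G$), whose image equals $R^*(S_b)$ by hypothesis. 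Hence the right-hand side lives in $R^*(S_b^{|P|})$, and Remark \ref{rmk:Rpreserved} shows that the pushforward along $\iota_P$ lands in $R^*(S_b^r)$. For the reverse inclusion, each generator $h_i$ and $\mathfrak{o}_i$ of $R^*(S_b^r)$ is the $i$-th projection's pullback of a class in $R^*(S_b) = \im(\CH^*(\rS) \to \CH^*(S_b))$, hence extends to $\rS^{r/B}$, while each big diagonal $\Delta_{ij}$ is the restriction to the fiber of the relative diagonal in $\rS^{r/B}$.

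The main technical obstacle will be the fact that the closed subsets $Y_i$, being unions of partial diagonals in $G^r$, are not smooth, so Proposition \ref{prop:SPB} does not apply verbatim. This is circumvented by the linear refinement of the partition order mentioned above, so that at each step a single smooth partial diagonal is added, and by running the localization–projective bundle argument in the proof of Proposition \ref{prop:SPB} by induction on the filtration, mimicking the proofs of Lemma \ref{lemma:Restriction}, Proposition \ref{prop:Restriction} and Proposition \ref{prop:RestrIm3}. Notably, no excess-intersection correction analogous to the one in Proposition \ref{prop:RestrIm3} appears here: condition $(\star_r)$ guarantees that the only degenerations of the fibers of $q$ come from coincidences of the marked points, ruling out any extra higher-order incidence strata.
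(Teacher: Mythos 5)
Your proposal is correct and rests on the same engine as the paper's proof, namely Proposition \ref{prop:SPB} applied to the evaluation map $q\colon\rS^{r/B}\to G^{r}$, but your stratification is genuinely coarser than the paper's, and this changes the shape of the argument in a way worth recording. The paper stratifies $G^{r}$ by \emph{all} incidence relations among the $r$ points, including strata whose general point consists of $r$ pairwise distinct points in special position (the analogue of the collinear locus $J$ of \S\ref{subsect:Hilb3K3}); it must then show those strata contribute nothing new, which it does via the excess intersection formula, using $(\star_{r})$ to see that their preimages have the expected codimension so that their contributions are absorbed into the coarser terms. You instead stratify only by the partial diagonals $\Delta_{P}$ and observe that $(\star_{r})$ already makes $q$ a genuine projective bundle over each locally closed coincidence stratum, since the evaluation map $H^{0}(G,E)\otimes\rO\to\bigoplus_{j}\pr_{j}^{*}E$ is fiberwise surjective there and hence has a subbundle as kernel; so no incidence strata and no excess-intersection step are needed. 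This observation is correct, and it buys a cleaner proof: the paper's extra strata are redundant precisely because of $(\star_{r})$, which is what its third bullet verifies a posteriori. A second, smaller difference is that you identify every partial-diagonal term with a piece of the tautological ring directly via the K\"unneth formula for $G^{|P|}$ together with the hypothesis on $\im\left(\CH^{*}(\rS)\to\CH^{*}(S_{b})\right)$, whereas the paper runs an induction on $r$ for the diagonal strata; both work. Two harmless imprecisions: the codimension hypothesis of Proposition \ref{prop:SPB} concerns $\codim_{\rX_{i}}(\rX_{i+1})$, which equals $2$ here because the fibers are surfaces, rather than the codimension of the strata in $G^{r}$; and with the paper's convention $B=H^{0}(G,E)$ the fiber of $q$ over $k$ distinct points is a linear subspace of codimension $k\cdot\rank(E)$, so your dimension count is off by one until one projectivizes.
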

	\begin{proof}
		The proof is to rephrase every step of \S \ref{subsect:Hilb3K3} in the general
		setting. We proceed by induction on~$r$.
		Consider the evaluation map $q: \rS^{r/B}\to G^{r}$, which is a stratified
		projective bundle (Definition~\ref{def:SPB}) with the stratification on $G^{r}$
		given by the different types of incidence relations for $r$ points of $G$\,:
		\begin{equation}
		\xymatrix{
			\rX_{n}=\rS\ar@{^{(}->}[r]\ar[d]^{q_{n}=p}\cart& \cdots \ar@{^{(}->}[r] \cart&
			\rX_{1}\ar@{^{(}->}[r] \ar[d]^{q_{1}} \cart& \rX_{0}=\rS^{r/B} \ar[d]^{q_{0}=q}
			\ar[r]& B\\
			Y_{n}=G \ar@{^{(}->}[r]& \cdots \ar@{^{(}->}[r]& Y_{1}\ar@{^{(}->}[r]&
			Y_{0}=G^{r}&
		}
		\end{equation}
		By Proposition \ref{prop:SPB}, for any $b\in B^{\circ}$,
		\begin{equation}\label{eqn:Image}
		\im\left(\CH^\ast(\rS^{r/B})\to
		\CH^\ast(S_{b}^{r})\right)=\sum_{i=0}^{n}\iota_{i,*}\im\left(\CH^\ast(Y_{i})\to
		\CH^\ast({\rX_{i}}'_{b})\right),
		\end{equation}
		where $\rX_{i}'$ is the Zariski closure of $\rX_{i}\backslash \rX_{i+1}$.
		Let us show that each term of (\ref{eqn:Image}) is in the tautological ring
		$R^\ast(S_{b})$ 
		by ascending order for $0\leq i\leq n$\,:
		
		\begin{itemize}
			\item If $i=0$, since the Chow ring of $G$ satisfies the K\"unneth formula, we
			only need to show that $$\im\left(\CH^\ast(G)\to \CH^\ast(S_{b})\right)\subset
			R^\ast(S_{b}),$$
			which is true by assumption.
			\item If a general point of $Y_{i}$ is parameterizing $r$ points of $G$ where at
			least two of them coincide, then the contribution of the $i$-th term of
			(\ref{eqn:Image}) factors through $R^{*}(S_{b}^{r-1})$ (via the diagonal
			push-forward) by the induction hypothesis, hence is contained in
			$R^{*}(S_{b}^{r})$ (Remark \ref{rmk:Rpreserved}).
			\item If a general point of $Y_{i}$ is parameterizing $r$ different points of
			$G$, then the hypothesis $(\star_{r})$ means precisely that any $r$ different
			points of $G$ impose independent conditions on $B$, each of codimension
			$\rank(E)$. Therefore, $\rX_{i}'$, the Zariski closure of $\rX_{i}\backslash
			\rX_{i+1}$, has the same codimension in $\rX_{i-1}$ as $\codim_{Y_{i-1}}
			(Y_{i})$. The excess intersection formula (\cite[\S 6.3]{MR1644323}) applied to
			the cartesian diagram
			\begin{equation*}
			\xymatrix{
				\rX_{i}=\rX_{i+1}\cup \rX'_{i}\ar@{^{(}->}[r] \ar[d]  \cart& \rX_{i-1}\ar[d]\\
				Y_{i} \ar@{^{(}->}[r]& Y_{i-1}
			}
			\end{equation*}
			tells us that modulo the $(i+1)$-th term of (\ref{eqn:Image}), the contribution
			of the $i$-th term is contained in the $(i-1)$-th term. 
		\end{itemize}
	\end{proof}
	
	\begin{thm}\label{thm:beyond}
		Fix a genus $g$ for which a Mukai model exists for K3 surfaces of genus $g$, and
		fix such a Mukai model.  Assume that
		\begin{enumerate}[(i)]
			\item
			the Mukai model satisfies the condition $(\star_{r})$\,; 
			\item Conjecture \ref{conj:FranchettaK3} is true for the universal family
			$\rS\to B$ of K3 surfaces of genus $g$\,;
			\item the cycle class map restricted to the tautological ring $R^{*}(S^{r})$ is
			injective for the very general K3 surface $S$ of genus $g$.
		\end{enumerate}
		Then the Franchetta property holds for
		$\rS^{[r_{1}]/B}\times_{B}\cdots\times_{B} \rS^{[r_{m}]/B}$, for any $r_{1},
		\cdots, r_{m}$ whose sum is $\leq r$.
	\end{thm}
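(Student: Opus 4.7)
The plan is to combine three ingredients: the de Cataldo--Migliorini decomposition theorem \cite{MR1919155} applied fiberwise in families, to reduce the statement from Hilbert schemes to cartesian powers; Proposition~\ref{prop:beyond} to locate the restriction image inside the tautological ring; and hypothesis~(iii) together with a standard spread-out argument to conclude.

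Set $\rX := \rS^{[r_{1}]/B}\times_{B}\cdots\times_{B} \rS^{[r_{m}]/B}$ and $n := r_{1}+\cdots+r_{m} \leq r$. Given $z \in \CH^{*}(\rX)$ with fiberwise trivial cohomology class, I would first observe that the incidence correspondences of~\cite{MR1919155} on each Hilbert scheme $S^{[r_i]}$ (indexed by partitions of $r_i$, and generalizing the varieties $U$, $V$, $W$ of the Hilbert cube case) are defined relatively over $B^\circ$; their external fiber products over $B$ yield family correspondences $\rU_{\boldsymbol\lambda}$ from $\rX$ to $\rS^{N/B}$ for some $N = N(\boldsymbol\lambda) \leq n$. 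By flat base change these pushforwards commute with restriction to fibers over $B^\circ$, and their combined action is injective on the Chow group of each fiber $X_b$ by the main theorem of \cite{MR1919155}. Since hypothesis (i) gives $(\star_k)$ for every $k \leq r$, and hypothesis (ii) together with the trivial observations in codimensions $0$ and $1$ gives $\im(\CH^{*}(\rS)\to\CH^{*}(S_b)) = R^{*}(S_b)$ for every $b\in B^\circ$, Proposition~\ref{prop:beyond} yields $\im(\CH^{*}(\rS^{k/B})\to\CH^{*}(S_b^{k})) = R^{*}(S_b^{k})$ for all $k\leq r$ and all $b\in B^\circ$; in particular each $\rU_{\boldsymbol\lambda,*}(z)|_{S_b^{N}}$ lies in $R^{*}(S_b^{N})$.

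For very general $b\in B^\circ$, each such cycle is also cohomologically trivial. Hypothesis~(iii) asserts that the cycle class map is injective on $R^{*}(S^{r})$ for very general $S$, and pullback along any projection $S^{r}\to S^{N}$ (split by the section inserting the Beauville--Voisin point $\mathfrak{o}_{S}$ in the remaining coordinates) sends $R^{*}(S^{N})$ injectively into $R^{*}(S^{r})$ compatibly with cycle classes, so the same injectivity holds on $R^{*}(S^{N})$ for every $N\leq r$. Therefore each $\rU_{\boldsymbol\lambda,*}(z)|_{S_b^{N}}$ vanishes for very general $b$, and the injectivity of the de Cataldo--Migliorini decomposition forces $z|_{X_b}=0$. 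A standard specialization argument (compare the proof of Lemma~\ref{lemma:Compare}) then extends the vanishing to every $b\in B^\circ$. The main technical subtlety is the construction of the family versions of the de Cataldo--Migliorini correspondences and their compatibility with fiber restriction, but this is immediate from the smoothness and flatness of the relative Hilbert schemes and their relative incidence varieties over $B^\circ$; beyond that the argument is a clean packaging of the tools already developed in the paper.
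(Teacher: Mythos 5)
Your proposal is correct and follows essentially the same route as the paper: the paper's proof handles the cartesian powers $\rS^{k/B}$ ($k\leq r$) via Proposition~\ref{prop:beyond} together with hypothesis~(iii), and reduces the general case to these via de Cataldo--Migliorini's result, exactly as you do. You merely spell out details the paper leaves implicit (the family versions of the incidence correspondences, the verification of the input to Proposition~\ref{prop:beyond} from hypothesis~(ii), and the passage from $R^{*}(S^{N})$ to $R^{*}(S^{r})$), all of which are sound.
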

	\begin{proof}
		The case of relative powers $\rS^{k/B}$, for any $k\leq r$, is a direct
		consequence of Proposition \ref{prop:beyond} and the hypothesis on the
		injectivity of the cycle class map on the tautological ring. The other cases
		reduce to the cases of $\rS^{k/B}$ for all $1\leq k\leq r$ by making use of de
		Cataldo--Migliorini's result \cite{MR1919155} for Chow motives of Hilbert
		schemes of surfaces.
	\end{proof}

	We apply Theorem \ref{thm:beyond} to some Mukai models to get concrete
	unconditional results\,:
	\begin{proof}[Proof of Theorem \ref{thm:mainHilbmore}]
		Assumption $(ii)$ is proven for $g\in\{2,\ldots,10\}\cup\{12\}$ in
		\cite{FranchettaK3}. Assumption $(iii)$ is taken care of for $r\le 43$ by
		Voisin's \cite[Proposition 2.2]{MR2435839}.
		It remains to check assumption $(i)$ of Theorem \ref{thm:beyond}\,; we proceed
		by a case-by-case analysis of the positivity of the homogeneous bundle in the
		Mukai model. See Mukai's series of papers \cite{Mukai88}, \cite{Mukai89},
		\cite{Mukai06}, \cite{Mukai12} for more information on the geometry of these
		models.
		\begin{itemize}
			\item  K3 surfaces of genus $g=2$ are{}\footnote{Equivalently, these K3 surfaces
				are also double covers of $\P^{2}$ ramified along smooth sextic curves.} smooth
			degree 6 hypersurfaces in the weighted projective space $\P:=\P(1,1,1,3)$. The
			Mukai model for this family is thus $(G, E)=(\P, \rO(6))$. Note that the K3
			surfaces in this family all avoid the singular point $O:=[0,0,0,1]$.  Let us
			check the condition $(\star_{3})$, \ie, that the evaluation map $$H^{0}(\P,
			\rO(6))\to \bigoplus_{i=1}^{3}\C_{x_{i}}$$ is surjective for distinct $x_{1},
			x_{2}, x_{3}\neq O$, where $\C_{x}$ denotes the fiber of $\rO(6)$ at $x$. It is
			easy to see that $\P(1,1,1,3)$ is isomorphic to the projective cone over the
			third Veronese embedding of $\P^{2}$ (\cf \cite{Dolgachev}) and $O$ is the
			vertex. By upper-semicontinuity, it is enough to treat the most degenerate case
			for three distinct points of $\P\backslash \{O\}$, which is when they lie in the
			same ruling of the projective cone. In this case, as the restriction of $\rO(6)$
			to the ruling is $\rO(2)$, the condition $(\star_{3})$ follows from the
			surjections\,:
			$$H^{0}(\P, \rO(6))\surj H^{0}(\P^{1}, \rO_{\P^{1}}(2))\surj
			\bigoplus_{i=1}^{3}\C_{x_{i}},$$ where $\P^{1}$ is the ruling which contains
			$x_{i}$'s. 
			\item For quartic surfaces ($g=3$), let us first show that $(\P^{3}, \rO(4))$
			satisfies $(\star_{5})$, \ie, that the evaluation map $$H^{0}(\P^{3}, \rO(4))\to
			\bigoplus_{i=1}^{5}\C_{x_{i}}$$
			is surjective for distinct $x_{i}$'s. Again, it is enough to treat the most
			degenerate cases, namely\,:
			\begin{itemize}
				\item when $x_{1}, \cdots, x_{5}$ are collinear, then this follows from the
				surjectivity of the restriction and the evaluation $$H^{0}(\P^{3}, \rO(4))\surj
				H^{0}(\P^{1}, \rO(4))\surj \bigoplus_{i=1}^{5}\C_{x_{i}},$$ where $\P^{1}$ is
				the line containing these points.
				\item when $x_{1}, \cdots, x_{5}$ are in a conic $C$. Then the Koszul resolution
				provides an exact sequence $$0\to \rO_{\P^{3}}(-3)\to \rO_{\P^{3}}(-1)\oplus
				\rO_{\P^{3}}(-2)\to\rO_{\P^{3}}\to \rO_{C}\to 0,$$ which allows us to see that
				the restriction map $H^{0}(\P^{3}, \rO(4))\to H^{0}(C, \rO_{C}(8))$ is
				surjective. Since $H^{0}(C, \rO_{C}(8))\to \bigoplus_{i=1}^{5}\C_{x_{i}}$ is
				clearly surjective, we are done.
			\end{itemize}
			The condition $(\star_{5})$ is proven. 
			\item For $g=6$, the Mukai model is $(G, E)=(\Gr(2,5), \rO(1)^{\oplus 3}\oplus
			\rO(2))$, where $\rO(1)$ is the Pl\"ucker line bundle. It is clear that the
			condition $(\star_{2})$ is equivalent to the surjectivity of $$H^{0}(G,
			\rO(1))\to \C_{x_{1}}\oplus\C_{x_{2}}$$ for any two distinct points $x_{1},
			x_{2}\in G$. This last condition follows from the very ampleness of the
			Pl\"ucker line bundle $\rO(1)$.
			\item For $g=7$, the Mukai model is $(G, E)=\left(\OGr(5,10), U^{\oplus
				8}\right)$, where $\OGr(5,10)$ is the orthogonal Grassmannian parameterizing
			isotropic subspaces of dimension 5 in a vector space of dimension 10 equipped
			with a non-degenerate quadratic form and $U$ is a line bundle corresponding to
			the half spinor representation. The proof is similar to the previous case\,: one
			uses the very ampleness of $U$.
			\item For $g=8$, the Mukai model is $(G, E)=\left(\Gr(2,6), \rO(1)^{\oplus
				6}\right)$, where $\rO(1)$ is the Pl\"ucker line bundle. The proof goes as for
			$g=6$ by the very ampleness of the Pl\"ucker line bundle.
			\item For $g=9$, the Mukai model is $(G,E)=\left(\LGr(3,6),\rO(1)^{\oplus
				4}\right)$, where $\LGr(3,6)$ is the symplectic Grassmannian parameterizing
			Lagrangian subspaces in a 6-dimensional vector space equipped with a symplectic
			form and $\rO(1)$ is the restriction of the Pl\"ucker line bundle of $\Gr(3,6)$.
			The proof goes as before\,: one uses the very ampleness of $\rO(1)$.
			\item For $g=10$, the Mukai model is $(G, E)=(G_{2}/P, \rO(1)^{\oplus 3})$,
			where $G$ is the 5-dimensional quotient of the simply-connected semi-simple
			algebraic group of type $G_{2}$ by a maximal parabolic subgroup $P$ and $\rO(1)$
			is the line bundle associated to the adjoint representation of $G_{2}$\,; in
			other words, $G=G_{2}/P\inj \P(\mathfrak{g}_{2}^{\vee})$. Again, we can conclude
			by the very ampleness of $\rO(1)$.
			\item For $g=12$, we use a slight variant of the above argument. Indeed, the
			general K3 surface of genus 12 can be constructed as an anti-canonical section
			in a smooth prime Fano threefold $X$ of genus 12 (\cf \cite{Beau}, \cite[Section
			3.1]{IM}). The Fano threefold $X$ has very ample anti-canonical bundle, and
			$H^3(X,\Q)=0$ (\cite[Corollary 4.3.5]{IP}) so that $X$ has trivial Chow
			groups\footnote{Following Voisin \cite{MR3099982}, we say a smooth projective
				variety has \emph{trivial Chow groups} if the cycle map $\cl^{i}:
				\CH^{i}(X)_{\Q}\to H^{2i}(X, \Q)$ is injective for any $i$.} (this Fano
			threefold $X$ is the variety denoted by $X_{22}\subset \P^{13}$ in
			\cite[Propositions 4.1.11 and 4.1.12]{IP}\,; actually $X$ is an intersection of
			quadrics). We now consider a variant of Theorem \ref{thm:beyond}, replacing $G$
			by $X$ and $E$ by $-K_X$. The very ampleness of $-K_X$ ensures that condition
			$(\star_2)$ holds. As $X$ has trivial Chow groups, there is a Chow--K\"unneth
			formula for products of $X$, and so one is reduced to the statement for the K3
			surface $S_b$, which is \cite{FranchettaK3}.
		\end{itemize}
	\end{proof}

	\begin{rmk}[Limit of our method]\label{rmk:limit}
		Given a Mukai model $(G, E)$, 
		\begin{itemize}
			\item the global generation of $E$ corresponds to condition $(\star_{1})$, which
			essentially explains the reason why one can prove the generalized Franchetta
			conjecture for K3 surfaces with a Mukai model in \cite{FranchettaK3}. 
			\item For K3 surfaces of genus 2, $G=\P(1,1,1,3)$ and $E=\rO(6)$, the condition
			$(\star_{4})$ is not satisfied\,: it is violated by three distinct points lying
			on the same ruling, away from the singular point.
			\item For the quartic K3 surfaces, $G=\P^{3}$ and $E=\rO(4)$, the condition
			$(\star_{6})$ is not satisfied\,: it is violated by six collinear distinct
			points. Similarly, for the other two families of complete intersection K3
			surfaces (genus 4 and 5), $(\star_{4})$ is violated by four collinear distinct
			points. 
			\item For K3 surfaces of genus 6 and 8, whose Mukai model is $(G, E)=(\Gr(2,5),
			\rO(1)^{\oplus 3}\oplus \rO(2))$ and $(\Gr(2,6), \rO(1)^{\oplus 6})$
			respectively, the condition $(\star_{3})$ is not satisfied. Indeed, it is
			equivalent to the surjectivity of $H^{0}(G, \rO(1))\to
			\C_{x_{1}}\oplus\C_{x_{2}}\oplus \C_{x_{3}}$, which is violated by three
			distinct collinear points of $G$.
			\item For K3 surfaces of genus 13 and 20, the Mukai models are respectively
			\begin{eqnarray*}
				(G,E)=  \left(\Gr(3,7), (\wedge^{2}S^{\vee})^{\oplus 2}\oplus \wedge^{3}Q\right)
				\text{  and  } \left(\Gr(4,9), (\wedge^{2}S^{\vee})^{\oplus 3}\right).
			\end{eqnarray*}
			where $S$ is the tautological subbundle and $Q$ is the tautological quotient
			bundle. We claim that none of them verifies the condition $(\star_{2})$. For
			example, in the genus 13 case, the condition $(\star_{2})$ is equivalent to the
			surjectivities of the following two evaluation maps
			$$H^{0}(G, \wedge^{2}S^{\vee})\to \wedge^{2}S_{x}^{\vee}\oplus
			\wedge^{2}S_{y}^{\vee},$$
			$$H^{0}(G, \wedge^{2}Q)\to \wedge^{2}Q_{x}\oplus \wedge^{2}Q_{y},$$
			for any $x\neq y\in G$, which, by Bott theorem, amount to say that for any two
			different 3-dimensional subspaces  $W_{1}, W_{2}$ in a 7-dimensional vector
			space $V$, the natural maps $$\wedge^{2}V^{\vee}\to
			\wedge^{2}(W_{1}^{\vee})\oplus \wedge^{2}(W_{2}^{\vee}) $$ $$\wedge^{2}V\to
			\wedge^{2}(V/W_{1})\oplus \wedge^{2}(V/W_{2})$$ are surjective. It is not true
			when $\dim W_{1}\cap W_{2}\geq 2$. The case of genus 20 is similar.
			\item For K3 surfaces of genus 18, the Mukai model is $(G, E)=(\OGr(3,9),
			U^{\oplus 5})$, where $U$ is the rank 2 vector bundle associated to the
			representation $V$  associated to the fourth dominant weight
			$\omega_{4}=\frac{1}{2}(\alpha_{1}+2\alpha_{2}+3\alpha_{3}+4\alpha_{4})$, of the
			(semi-simple part of the) maximal parabolic group $P$. We claim that
			$(\star_{2})$ does not hold, \ie, there exist two different points $x, y\in G$
			such that $H^{0}(G, U)\to U_{x}\oplus U_{y}$ is not surjective\footnote{We thank
				Nicolas Ressayre for his kind help on the proof.}. Let $x=P/P$ and $y=wP/P$
			where $w=s_{\alpha_{3}}$, as an element in the Weyl group $W$, is the reflection
			with respect to the third simple root. Clearly, $w$ does not belong to the Weyl
			group of $P$, which is generated by $s_{\alpha_{1}}, s_{\alpha_{2}},
			s_{\alpha_{4}}$. A direct computation shows that the representation $H^{0}(G,
			U)$ has multiplicity one for all weights. On the other hand, $\omega_{4}$ is a
			common weight for $V$ and its conjugate by $w$ (since $w.
			\omega_{4}=\omega_{4}$). Hence $H^{0}(G, U)\to U_{x}\oplus U_{y}$ cannot be
			surjective.
			\item If one wants to follow the same strategy of this paper to establish the
			Franchetta property for (Hilbert) powers beyond the range stated in Theorem
			\ref{thm:mainHilb2} and Theorem~\ref{thm:mainHilbmore}, one has to deal with
			some essentially new universal cycles, which may not belong to the tautological
			ring, or rather, the tautological ring should be enlarged to include some more
			incidence classes from projective geometry than just the polarization class. 
		\end{itemize}
	\end{rmk}
	
	\subsection{Applications towards the Beauville--Voisin conjecture}
	
	Let us now turn to the consequences of our results in the direction of the
	Beauville--Voisin conjecture (and its refined version
	Conjecture~\ref{conj:BV+})\,:
	\begin{proof}[Proof of Corollaries \ref{cor:BV+} and \ref{cor:BV++}]
		The strongly coisotropic subvarieties $E_\mu$, and the
		Lagrangian surfaces $T$ and $U$, can all be defined over (suitable relative
		powers of) the universal family, and so these are just special cases of
		Proposition \ref{prop:ChernBV}, combined with Theorems \ref{thm:mainHilb2} and
		\ref{thm:mainHilbmore}.
	\end{proof}

	One can also prove a version of Corollaries \ref{cor:BV+} and \ref{cor:BV++} for
	product varieties of arbitrarily high dimension, but the statement is now
	restricted to $0$-cycles and $1$-cycles\,:
	
	\begin{cor}\label{cor:BV+r} Let $X$ be a product 
		\[ X=X_1\times X_2\times\cdots\times X_s,\ \ \ \dim X=2m,\]
		where $X_j$ is a Hilbert scheme $S^{[r]}$ with $S$ a K3 surface. Let $\tilde
		R^\ast(X)\subset \CH^\ast(X)$ denote\footnote{In this paper, the notation
			$R^\ast(X)$ is reserved for the \emph{tautological ring} of a power of K3
			surface, see Definition \ref{def:TautoRing}.} the 
		$\Q$-subalgebra generated by (pullbacks of) divisors on $X_j$, the Chern classes
		$c_{i}(T_{X_j})$, 
		plus the following coisotropic subvarieties\,:
		
		\begin{itemize}
			\item the strongly coisotropic subvarieties $E_\mu$ of \cite[4.1 item
			1)]{MR3524175}\,;
			\item
			the Lagrangian surfaces $T\subset X_j$ constructed in \cite[Proposition 4]{IM}
			(if $X_j=S^{[2]}$ and $S$ is of genus $g\in\{2,3,4,5,6,7,8,9,10,12\}$)\,;
			\item
			the surface of bitangents $U\subset X_j$ (if $X_j=S^{[2]}$ and $S$ is a quartic
			K3 surface).
		\end{itemize}
		Then $\tilde R^{2m}(X)$ and $\tilde R^{2m-1}(X)$ inject into cohomology via the
		cycle class map.
	\end{cor}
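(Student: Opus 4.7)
The strategy is a Künneth reduction to single factors, combined with the known (refined) Beauville--Voisin-type results for Hilbert schemes of K3 surfaces. Since every generator of $\tilde R^\ast(X)$ is pulled back from a single factor $X_j$, the subalgebra $\tilde R^\ast(X)$ is the image of the exterior-product map $\bigotimes_{j=1}^s \tilde R^\ast(X_j)\to \CH^\ast(X)_\Q$. Combined with the Künneth decomposition of $H^\ast(X,\Q)$, this reduces injectivity of $\tilde R^\ast(X)\to H^\ast(X,\Q)$ to the injectivity of each $\tilde R^\ast(X_j)\to H^\ast(X_j,\Q)$. A dimension count further restricts the relevant pieces: any class in $\tilde R^{2m}(X)$ is a $\Q$-combination of exterior products of 0-cycles on each factor, while any class in $\tilde R^{2m-1}(X)$ is a combination of exterior products in which exactly one factor is a 1-cycle and the remaining factors are 0-cycles. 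It therefore suffices to establish, for each $X_j=S^{[r]}$, injectivity of the cycle class map on $\tilde R^{\dim X_j}(X_j)$ and $\tilde R^{\dim X_j-1}(X_j)$.

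For 0-cycles on a single factor $X_j=S^{[r]}$, the required injectivity is available from the Beauville--Voisin-type results known for Hilbert schemes of K3 surfaces: divisors and Chern classes are handled by Voisin \cite{MR2435839}, the strongly coisotropic subvarieties $E_\mu$ by \cite{MR3524175}, and the Lagrangian $T$ and bitangent surface $U$ (when $X_j=S^{[2]}$) by Corollaries \ref{cor:BV+} and \ref{cor:BV++}. Concretely each such 0-cycle is rationally proportional to the Beauville--Voisin class $\mathfrak o_{X_j}$, so the top piece is at most one-dimensional and automatically injects.

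For 1-cycles on a single factor, the plan is to exploit the strongly coisotropic nature of each generator $E_\mu$ (and of $T$ when present): a codim-$i$ strongly coisotropic subvariety is swept by an $i$-dimensional family of constant-cycle subvarieties, and this forces intersections of such classes with complementary divisors and Chern classes to produce 1-cycles that lie on constant-cycle subvarieties. Via the de Cataldo--Migliorini decomposition \cite{MR1919155}, 1-cycles on $S^{[r]}$ can be read off from cycles on powers $S^k$ with $k\leq r$; one then shows that the image of $\tilde R^{\dim X_j-1}(X_j)$ in $\bigoplus_{k\leq r}\CH^\ast(S^k)_\Q$ lies in the tautological ring $R^\ast(S^k)$ of Definition \ref{def:TautoRing}, which injects into cohomology by Voisin's \cite[Proposition 2.2]{MR2435839}.

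The main obstacle is the 1-cycle step: one must verify that under the de Cataldo--Migliorini correspondence the coisotropic generators contribute only tautological classes on powers of $S$. This is a direct calculation starting from the universal definitions of $E_\mu$, $T$ and $U$, together with the weak-splitting property for divisors and Chern classes, but it is the technical crux of the argument.
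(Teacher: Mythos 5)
Your K\"unneth reduction is sound as far as it goes: every generator of $\tilde R^\ast(X)$ is pulled back from a single factor, so $\tilde R^\ast(X)$ is the image of $\bigotimes_{j}\tilde R^\ast(X_j)\to\CH^\ast(X)$, the composite with the cycle class map is the tensor product of the single-factor cycle class maps, and the codimension count correctly shows that only the pieces $\tilde R^{\dim X_j}(X_j)$ and $\tilde R^{\dim X_j-1}(X_j)$ intervene. But this merely relocates the problem: the whole content of the corollary is now the single-factor statement for $S^{[r]}$ with $r$ arbitrary, and that is precisely the step you defer as ``a direct calculation''. It is not one. The paper's proof establishes it by a different mechanism: each $X_j=S^{[r]}$ carries a multiplicative Chow--K\"unneth decomposition inducing a bigrading $\CH^i(X_j)_{(k)}$; divisors and Chern classes lie in $\CH^\ast(X_j)_{(0)}$, the $E_\mu$ lie there by \cite[Lemma 4.3]{MR3524175}, and $T$ and $U$ are placed in $\CH^2(X_j)_{(0)}$ by applying the Franchetta property (Theorem \ref{thm:beyond}) to the fiberwise homologically trivial relative cycle $\rT-(\pi^2_{\rX_j})_\ast(\rT)$. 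Multiplicativity of the decomposition then gives $\tilde R^\ast(X)\subset\CH^\ast(X)_{(0)}$, and one concludes because $\CH^i(X)_{(0)}$ injects into cohomology for $i\ge\dim X-1$ by \cite{motiveHK}. None of these three inputs appears in your argument, and each is needed.

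Two of your steps fail as stated. First, for $0$-cycles you assert that every top-degree monomial in divisors, Chern classes and the $E_\mu$ on $S^{[r]}$ is proportional to $\mathfrak{o}_{X_j}$, citing \cite{MR2435839}; but Voisin's results there for $S^{[n]}$ are subject to a bound on $n$ in terms of $b_{2,\mathrm{tr}}(S)$, while the corollary allows arbitrary $r$ and arbitrary $S$, and the paper's Corollaries \ref{cor:BV+} and \ref{cor:BV++} only treat $S^{[2]}$ and a few small Hilbert powers of low-genus $S$. Second, for $1$-cycles, the fact that a strongly coisotropic subvariety is swept by constant-cycle subvarieties is a statement about $0$-cycles: a $1$-cycle supported on constant-cycle subvarieties can still be homologically trivial without being rationally trivial, so this mechanism does not yield the injectivity you need. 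Likewise, the claim that the de Cataldo--Migliorini images of monomials involving $E_\mu$, $T$ and $U$ land in the tautological rings $R^\ast(S^k)$ is essentially equivalent to the assertion that these classes lie in the degree-zero part of the splitting --- which for $T$ and $U$ the paper can only prove via its Franchetta-type theorems. So the proposal correctly isolates the crux but does not prove it, and the route sketched for the $1$-cycle case would not suffice even if carried out.
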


	\begin{proof}[Proof of Corollary \ref{cor:BV+r}] This uses the fact that the
		$X_j$ have a {\em multiplicative Chow--K\"unneth decomposition\/}
		$\{\pi^k_{X_j}\}$, in the sense of \cite[Chapter 8]{SV}, \cite{SV2}\,; see also
		Appendix \ref{sec:appfourier}. This induces a bigrading of the Chow ring of
		$X_j$, given by 
		\[ \CH^i(X_j)_{(k)}:=(\pi_{X_j}^{2i-k})_\ast \CH^i(X_j).\] 
		It is readily seen that the projectors $\pi^k_{X_j}$ are universally defined
		(\ie, they exist as a relative cycle for the family $\rX^\circ_j\times_{B^\circ}
		\rX^\circ_j$). Theorem \ref{thm:beyond} applied to the relative cycle
		$\rT-(\pi^2_{\rX_j})_\ast(\rT)$ (where we use the formalism of relative
		correspondences as in \cite[Section 8.1]{MNP}), thus implies that
		\[ T\ \in\ \CH^2(X_j)_{(0)}.\]
		Similarly, we find that $U\in \CH^2(X_j)_{(0)}$. 
		The fact that $E_\mu$ belongs to $\CH^\ast(X_j)_{(0)}$ is true for Hilbert
		schemes of arbitrary K3 surfaces, \cf \cite[Lemma 4.3]{MR3524175}.
		
		The product $X$ also has a multiplicative Chow--K\"unneth decomposition, and
		hence there is a bigrading of the Chow ring $\CH^\ast(X)$ by \cite[Theorem
		8.6]{SV}. Since divisors and Chern classes of $X_j$ are also in
		$\CH(X_j)_{(0)}$, and pullback under any projection $X\to X_j$ preserves the
		bigrading \cite[Corollary 1.6]{SV2}, we see that there is an inclusion
		\[ \tilde R^\ast(X)\ \subset\CH^\ast(X)_{(0)}.\]
		The corollary now follows, since it is known that $\CH^i(X)_{(0)}$ injects
		into cohomology for $i\ge \dim (X)-1$, see \cite[Introduction]{motiveHK}.
	\end{proof}

	\subsection{Double EPW sextics}\label{epw} The interested reader will have no
	trouble finding further applications in the flavour of Corollaries \ref{cor:BV+}
	and \ref{cor:BV++}. For instance, consider the Hilbert square $X=S^{[2]}$, where
	$S$ is a general K3 surface of genus $6$. As shown by O'Grady \cite[Section
	4]{OG}, $X$ is isomorphic to a small resolution $X_A^\epsilon$ of a singular
	double EPW sextic $X_A$ (notation is as in \cite{OG}). Let 
	$\epsilon \colon X\to X_A$ denote the small resolution, and let $f_A\colon
	X_A\to Y_A$ denote the double cover to the associated EPW sextic 
	$Y_A$. The surface $S$ being general corresponds to the fact that the
	Lagrangian vector space $A$ is general (in the precise sense given in \cite[\S
	4]{OG}) in the divisor $\Delta\subset  \LGr(\wedge^3 V)$ studied in \cite{OG}. 
	This construction produces Lagrangian surfaces in $X$\,: the surface 
	\[P:=\epsilon^{-1}(\hbox{Sing}(X_A))\] 
	(which is isomorphic to $\P^2$ since $X_A$ has only one singular point), and
	the surface 
	\[\hbox{Fix}:=\epsilon^{-1}(\hbox{Fix}(\iota)),\] 
	where $\hbox{Fix}(\iota)$ denotes the fixed point locus of the
	(anti--symplectic) covering involution $\iota$ of $X_A$. These Lagrangian
	surfaces are easily seen to be universally defined. (Indeed, as shown in
	\cite{OG}, there is a stratification 
	\[Y_A[3]\subset Y_A[2]\subset Y_A[1]=Y_A\] 
	of the EPW sextic $Y_A$. Here the surface $Y_A[2]$ is the singular locus of
	$Y_A$ and the point $Y_A[3]$ is the unique singularity of $Y_A[2]$. One has 
	\[ \hbox{Fix}=(f_A\circ \epsilon)^{-1}(Y_A[2])\  \ \text{and} \ \  P=
	(f_A\circ \epsilon)^{-1}(Y_A[3]).\]
	On the other hand (as explained in \cite[Section 3]{OG}), there exist family
	versions $\rY[i]$ of the subvarieties $Y_A[i]$ over the base $\Delta$. One can
	perform a base change
	\[ \begin{array} [c]{ccc}
	\bar{\rX}_{B^\circ} & \to& \bar{\rX}\\
	\downarrow&&\downarrow\\
	{\rY}_{B^\circ} & \to& {\rY}\\
	\downarrow&&\downarrow\\
	B^\circ& \to& \Delta\\
	\end{array}\]
	where $B^\circ\subset B$ is an open such that the rational map
	$B\dashrightarrow \Delta$ of \cite[Section 4]{OG} is defined, and $ \bar{\rX}$
	is the tautological family of singular double EPW sextics over $\Delta$. 
	
	One obtains relative versions of $P$ and of $\hbox{Fix}$ by pulling
	back  $\rY[i]$ under the birational morphism $\rX_{B^\circ}\to
	\bar{\rX}_{B^\circ} $.) Thus, 
	applying Theorem \ref{thm:mainHilbmore} one obtains the following\,:
	
	\begin{cor}\label{cor:doubleEPW}
		Let $X=S^{[2]}$, where $S$ is a general K3 surface of genus $6$. The
		$\Q$-subalgebra
		\[  < D_1, D_2, c_i(T_X), P, \hbox{Fix}, T>\  \  \  \subset\ \CH^\ast(X) \]
		injects into cohomology via the cycle class map. (Here $D_1, D_2$ are two
		divisors generating the Picard group of $X$, and $T$ is the Lagrangian surface
		of \cite[Proposition 4]{IM}.)
	\end{cor}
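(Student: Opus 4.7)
The plan is to combine Proposition \ref{prop:ChernBV} with the Franchetta property established in Theorem \ref{thm:mainHilbmore}$(iii)$ for the relative Hilbert square of the universal family of genus~$6$ K3 surfaces. By Proposition \ref{prop:ChernBV} (applied in the relative setting of $\Hilb^2_{B^\circ}\rS\to B^\circ$), it is enough to verify that every generator appearing in the $\Q$-subalgebra $\langle D_1, D_2, c_i(T_X), P, \text{Fix}, T\rangle$ is the fiberwise restriction of an algebraic cycle defined universally on the relative Hilbert square over the parameter space $B^\circ$.

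First I would dispose of the easy generators. The two divisors $D_1,D_2$ spanning $\Pic(X)$ extend (possibly after passing to a finite cover trivializing the relevant monodromy, which is harmless for the Franchetta property by Remark~\ref{rmk:Parameter}) to divisor classes on $\Hilb^2_{B^\circ}\rS$; the Chern classes $c_i(T_X)$ are restrictions of the Chern classes of the relative tangent bundle $T_{\Hilb^2_{B^\circ}\rS/B^\circ}$; and the Iliev--Manivel Lagrangian surface $T$ has already been promoted to a relative cycle in the proof of Corollary \ref{cor:BV+}.

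The substantial step is to verify that $P$ and $\text{Fix}$ are universally defined. For this I would use the relative EPW construction recalled just before the corollary: the stratification $\rY[3]\subset \rY[2]\subset \rY$ of the family of EPW sextics over $\Delta$, combined with the rational map $B\dashrightarrow \Delta$ and the base change to $B^\circ$, gives a tautological family $\bar{\rX}_{B^\circ}$ of singular double EPW sextics together with its small resolution $\rX_{B^\circ}\to \bar{\rX}_{B^\circ}$ and double cover to $\rY_{B^\circ}$. Pulling $\rY[2]$ and $\rY[3]$ back through these relative morphisms yields relative versions of $\text{Fix}=(f_A\circ \epsilon)^{-1}(Y_A[2])$ and $P=(f_A\circ \epsilon)^{-1}(Y_A[3])$ respectively.

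Once all generators are exhibited as restrictions of universal cycles, any polynomial in them whose class in $H^\ast(X,\Q)$ vanishes lifts to a fiberwise homologically trivial cycle on $\Hilb^2_{B^\circ}\rS$, which by Theorem \ref{thm:mainHilbmore}$(iii)$ must be fiberwise rationally trivial; this yields the claimed injectivity. The main obstacle is the universality of $P$ and $\text{Fix}$, but this has essentially been addressed by the explicit construction recalled above; no further geometric input is required.
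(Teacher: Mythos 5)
Your proposal is correct and follows essentially the same route as the paper: the paper also reduces the statement to Proposition \ref{prop:ChernBV} combined with the Franchetta property of Theorem \ref{thm:mainHilbmore} for the relative Hilbert square in genus $6$, after exhibiting $P$ and $\hbox{Fix}$ as pullbacks of the relative strata $\rY[3]\subset\rY[2]\subset\rY$ via the base change $B^\circ\to\Delta$ exactly as you describe. The only cosmetic difference is that your monodromy caveat for $D_1,D_2$ is unnecessary, since for general $S$ of genus $6$ the Picard group of $S^{[2]}$ is generated by the polarization and the exceptional divisor, both of which are already universally defined.
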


	\subsection{An application to Bloch's conjecture }\label{subsect:AppBloch}
	Given a quartic K3 surface $S$, Beauville \cite{Kata} constructed an interesting
	involution $\iota$ on $X:=S^{[2]}$, which, generically, sends $\{x_{1}, x_{2}\}$
	to $\{x_{3}, x_{4}\}$, where $x_{1}, \ldots, x_{4}$ are the four intersection
	points of the line $\bar{x_{1}, x_{2}}$ with $S$. The involution $\iota$ is
	anti-symplectic. According to the generalized Bloch conjecture (\cf \cite[\S
	11.2]{MR1997577}), which roughly says that $\CH_{0}$ is ``controlled'' by the
	holomorphic forms, the action of $\iota$ on $\CH_{0}(X)$ should be the identity
	on $\Gr_{F}^{0}\CH_{0}(X)$ and on $\Gr^{4}_F\CH_{0}(X)$ (just as on $H^{0}(X)$
	and $H^{4,0}(X)$) and should be $-\id$ on $\Gr_{F}^{2}\CH_{0}(X)$ (just as on
	$H^{2,0}(X)$), where $F^{\cdot}$ is the conjectural Bloch--Beilinson filtration.
	On the other hand, as  conjectured in \cite{MR2187148} by Beauville and worked
	out by Shen--Vial in \cite{SV} in the case of Hilbert squares of K3 surfaces, we
	have a canonical splitting of this filtration for $X$, giving a direct sum
	decomposition\,:
	$$\CH^{4}(X)=\CH^{4}(X)_{(0)}\oplus\CH^{4}(X)_{(2)}\oplus\CH^{4}(X)_{(4)}.$$
	Hence the action of $\iota$ on the three summands should be $\id$, $-\id$ and
	$\id$, respectively. Our results allow us to confirm this expectation.
	
	\begin{proof}[Proof of Corollary \ref{cor:blochkata}] 
		Let $\rS^{\circ}\to B^{\circ}$ be the universal family of smooth quartic K3
		surfaces and $\rX^{\circ}\to B^{\circ}$ be the relative Hilbert square.
		As noted above, the bigrading $\CH^\ast(X)_{(\ast)}$ is induced by a self-dual
		multiplicative Chow--K\"unneth decomposition $\{\pi^k_X\}$ that is universally
		defined. The anti-symplectic involution $\iota$ can also be defined on the level
		of the universal family\,; let us denote $\Gamma_\iota\in
		\CH^4(\rX^\circ\times_{B^\circ}\rX^\circ)$ the graph of the involution
		$\iota\colon\rX^\circ\to\rX^\circ$.
		
		The relative correspondence
		\[ \pi^i_{\rX} \circ  \Gamma_\iota\circ \pi^j_{\rX} \ \ \ \in\
		\CH^4(\rX^\circ\times_{B^\circ}\rX^\circ) \]
		is fiberwise homologically trivial for $i\neq j$. Theorem
		\ref{thm:mainHilbmore} $(ii)$ for $\Hilb^{2}_{B}\rS\times_{B}\Hilb^{2}_{B}\rS$
		implies that
		\begin{equation}\label{comm}  \Bigl(\pi^i_{\rX} \circ \Gamma_\iota\circ
		\pi^j_{\rX} \Bigr)\vert_{X_b\times X_b}=0\ \ \ \hbox{in}\ \CH^4(X_b\times X_b),\
		\ \ \forall i\neq j \ \ \forall b\in B^\circ\ , \end{equation}
		\ie, $\Gamma_{\iota_b}$ belongs to $\CH^4(X_b\times X_b)_{(0)}$, and thus
		$\iota_b$ preserves the bigrading $\CH^\ast(X_b)_{(\ast)}$.
		
		Next, the fact that $\iota_b$ is anti-symplectic means that for any $b\in
		B^\circ$ there exists a divisor $D_b\subset X_b$, and a cycle $\gamma_b$
		supported on $D_b\times D_b$, such that
		\[  \Bigl( (\Delta_{\rX}+\Gamma_\iota)\circ \pi^2_{\rX} \Bigr)\vert_{X_b\times
			X_b}=\gamma_b\ \ \ \hbox{in}\ H^8(X_b\times X_b).\]
		Using a Hilbert schemes argument as in \cite[Proposition 3.7]{MR3099982}, the
		$D_b$ and $\gamma_b$ can be spread out, \ie, there exists a divisor $\rD\subset
		\rX$ and a relative cycle $\gamma$ supported on $\rD\times_{B^\circ}\rD$ such
		that
		\[   \Bigl((\Delta_{\rX}+\Gamma_\iota)\circ \pi^2_{\rX}
		-\gamma\Bigr)\vert_{X_b\times X_b}=0\ \ \ \hbox{in}\ H^8(X_b\times X_b)\ \ \
		\forall b\in B^\circ.\]  
		Applying Theorem \ref{thm:mainHilbmore} once more, we find that
		\begin{equation}\label{this}   \Bigl((\Delta_{\rX}+\Gamma_\iota)\circ
		\pi^2_{\rX} -\gamma\Bigr)\vert_{X_b\times X_b}=0\ \ \ \hbox{in}\ \CH^4(X_b\times
		X_b)\ \ \ \forall b\in B^\circ. \end{equation}    
		For general $b\in B^\circ$, the restriction $\gamma\vert_{X_b\times X_b}$
		will be supported on (divisor)$\times$(divisor), and so $\gamma\vert_{X_b\times
			X_b}$
		will act as $0$ on $\CH^2(X_b)_{(2)}$. It follows that
		\[ (\iota_b)^\ast=-\id\colon\ \ \ \CH^2(X_b)_{(2)}\ \to\ \CH^2(X_b)_{(2)}\ \
		\ \hbox{for\ general\ }b\in B^\circ.\]
		To extend this to {\em all\/} $b\in B^\circ$, one notes that the above
		construction can be done with a divisor $\rD\subset\rX$ in general position with
		respect to $X_b$.
		
		The statement for $\CH^4(X_b)_{(2)}$ follows upon taking the transpose of
		relation (\ref{this}), and using the relation (\ref{comm}). The remaining
		statements are proven similarly.
	\end{proof}
	
	\begin{rmk} Corollary \ref{cor:blochkata} was proven in a more convoluted
		way in \cite{moi}.
	\end{rmk}

	\section{Lehn-Lehn-Sorger-van Straten hyper-K\"ahler
		eightfolds}\label{sect:LLSvS}
	In this section we first show Theorem \ref{thm:mainBD2} and then deduce from it
	Theorem \ref{thm:mainLLSvS}. 
	
	Keep the same notation as in \S \ref{sect:BD}. We still have a correspondence\,:
	\begin{equation*}
	\xymatrix{
		&\rF\times_{B} \rF\ar[dr]^{q:=(p,p)}\ar[dl]_{\pi_{2}\deff (\pi, \pi)}&\\
		B&& G\times G
	}
	\end{equation*}
	However the problem is that $q$ is no longer a projective bundle\,: the fiber of
	$q$ over a pair of lines $(l, l')$ is the subspace of cubic fourfolds containing
	both $l$ and $l'$, whose dimension depends therefore on the relative position of
	$(l, l')$. To adapt the same strategy to this case, we use similar techniques as
	in \cite{MR3099982}, \cite{MR3343908} by studying the various strata of the
	morphism $q$. There are three possible relative positions between two projective
	lines in $\P^{5}$\,: identical, intersecting but not identical, not
	intersecting.
	
	On the one hand,  for a (general) cubic fourfold $X$ with Fano variety of lines
	$F$, let $$I\deff\left\{(l, l')\in F\times F~~\vert~~ l\cap l'\neq
	\vide\right\}$$ be the 6-dimensional incidence subvariety of $F\times F$. The
	incidence subvariety $I$ has two natural projections to $F$ with fiber over
	$l\in F$ the surface $S_{l}$ parameterizing lines inside $X$ meeting $l$. 
	Similarly, we consider the family version of this incidence subvariety inside
	$\rF\times _{B}\rF$\,:
	$$\rI\deff \left\{(b, l, l')\in \rF\times_{B} \rF~~\vert~~ l\cap l'\neq
	\vide\right\}=\left\{(b, l, l')\in B\times G\times G~~\vert~~ l, l'\subset
	X_{b}~~; l\cap l'\neq \vide\right\}.$$
	On the other hand, we define $J\deff\left\{(l, l')\in G\times G~~\vert~~ l\cap
	l'\neq \vide\right\}$ to be the incidence subvariety of $G\times G$.
	
	These incidence subvarieties, together with the diagonals, give the
	stratification\,:
	\begin{equation*}
	\xymatrix{
		\rF=\Delta_{\rF/B}\ar@{^{(}->}[r] \ar[d]_{p} & \rI \ar@{^{(}->}[r]
		\ar[d]^{q|_{\rI}}& \rF\times _{B}\rF \ar[d]^{q} \ar[r]^-{\pi_{2}}& B\\
		G=\Delta_{G}\ar@{^{(}->}[r] & J\ar@{^{(}->}[r] & G\times G &
	}
	\end{equation*}
	where $q$ is a projective bundle outside of $\rI$ and $q|_{\rI}$ is also a
	projective bundle outside of $\Delta_{\rF}$\,; in other words, $q$ is a
	\emph{stratified projective bundle} in the sense of Definition \ref{def:SPB}.
	
	Let $B^{\circ}$ be the Zariski open subset of $B$ parameterizing smooth cubic
	fourfolds.
	Applying Proposition \ref{prop:SPB} to $q$, we have the following analogue of
	Lemma \ref{lemma:Restriction}  and Proposition \ref{prop:Restriction} in our
	case\,:
	\begin{prop}\label{prop:RestIm}
		For any $b\in B^{\circ}$, we have
		\begin{eqnarray*}
			&&\im\left(\CH^\ast(\rF\times_{B}\rF)\to \CH^\ast(F_{b}\times F_{b})\right)\\
			&=&\im\left(\CH^\ast(G\times G)\to \CH^\ast(F_{b}\times F_{b})\right)+i_{*}\im
			\left(\CH^\ast(J)\to \CH^\ast(I_{b})\right)+\Delta_{*}\im\left(\CH^\ast(G)\to
			\CH^\ast(F_{b})\right),
		\end{eqnarray*}
		where $i: I_{b}\inj F_{b}\times F_{b}$ and $\Delta: F_{b}\inj F_{b}\times F_{b}$
		are the inclusions.
	\end{prop}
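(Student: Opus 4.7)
The plan is to derive this identity as a direct application of Proposition \ref{prop:SPB} to the morphism $q = (p,p)\colon \rF\times_B\rF \to G\times G$, equipped with the stratification
$$\rX_0 = \rF\times_B\rF \supset \rX_1 = \rI \supset \rX_2 = \Delta_{\rF/B},\qquad Y_0 = G\times G \supset Y_1 = J \supset Y_2 = \Delta_G$$
displayed just before the statement. The task then reduces to verifying the four hypotheses of that proposition and reading off the conclusion.

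The key geometric input is that each restriction $q_i'\colon \rX_i\setminus\rX_{i+1}\to Y_i\setminus Y_{i+1}$ is genuinely a projective bundle. For $q_0'$, the fiber over a pair of disjoint lines $(l,l')\in (G\times G)\setminus J$ is the projectivization of the kernel of the evaluation $H^0(\P^5,\rO(3))\to H^0(l\sqcup l',\rO(3))$\,; two disjoint lines in $\P^5$ impose independent conditions on cubics, so this kernel has constant dimension. For $q_1'$, the fiber over $(l,l')\in J\setminus\Delta_G$ is the projectivization of cubics vanishing on the nodal curve $l\cup l'$, and reducing to the $\P^2$ spanned by $l\cup l'$ yields surjective evaluation of constant rank. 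Finally $q_2'$ is just the restriction of $p\colon \rF\to G$, already identified in Section \ref{sect:BD} as a projective bundle. This geometric verification is the step I expect to require most of the work.

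The remaining hypotheses are then easy. Smooth projectivity of $G\times G$, $J$ and $\Delta_G$ is classical\,; flatness of the $\rX_i$ over $B^\circ$ follows from the constancy of Hilbert polynomials together with the smoothness of $F_b\times F_b$, $I_b$ and $F_b$ for $b\in B^\circ$\,; and since $I_b$ is of codimension $2$ in $F_b\times F_b$ and $\Delta_{F_b}$ is of codimension $2$ in $I_b$, the hypothesis $\codim_{\rX_i}(\rX_{i+1})\geq 2$ holds. The line bundle $\rO_B(1)$ on $B=\P(\Sym^3 V^\vee)$ restricts on each fiber of $q_i'$ to the tautological hyperplane class of the linear system of cubics passing through the relevant lines, supplying the required line bundle on $B$. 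Proposition \ref{prop:SPB} then delivers the stated identity, the contributions from $Y_0$, $Y_1$ and $Y_2$ identifying respectively with $\im(\CH^\ast(G\times G)\to\CH^\ast(F_b\times F_b))$, $i_*\im(\CH^\ast(J)\to\CH^\ast(I_b))$ and $\Delta_*\im(\CH^\ast(G)\to\CH^\ast(F_b))$.
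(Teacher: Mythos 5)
Your proposal is correct and follows exactly the paper's route: the paper proves Proposition \ref{prop:RestIm} by applying Proposition \ref{prop:SPB} to the stratified projective bundle $q=(p,p)$ with strata $\Delta_{\rF/B}\subset\rI\subset\rF\times_B\rF$ over $\Delta_G\subset J\subset G\times G$, exactly as you do. Your explicit verification of the hypotheses (constancy of the linear systems of cubics through a disjoint, respectively intersecting, pair of lines; the codimension-two estimates; the role of $\rO_B(1)$) is the content the paper leaves implicit, and it is accurate.
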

	
	As the incidence subvariety $J$ is singular along the smaller stratum
	$\Delta_{G}$, it is more convenient to work with a natural resolution of
	singularities. To this end, we define 
	\begin{eqnarray*}
		\tilde \rI&\deff &\left\{ (b, x, l, l')\in B\times \P^{5}\times G\times G
		~~\vert~~ l,l'\subset X_{b}~;~ x\in l\cap l'\right\}\,;\\
		\tilde J&\deff& \left\{ (x, l, l')\in \P^{5}\times G\times G ~~\vert~~ x\in
		l\cap l'\right\}\,;\\
		\rP &\deff& \left\{ (b, x, l)\in B\times \P^{5}\times G ~~\vert~~ l\subset
		X_{b}~;~ x\in l\right\}\,;\\
		Q&\deff& \left\{ (x, l)\in \P^{5}\times G ~~\vert~~ x\in l\right\},
	\end{eqnarray*}
	where $\tilde\rI$ (\resp $\tilde J$) admits a natural birational morphism to
	$\rI$ (\resp $J$), which contracts $\rP$ (\resp $Q$) to $\rF$ (\resp $G$). We
	summarize the situation in the following diagram whose squares are all
	cartesian\,:
	\begin{equation*}
	\xymatrix{
		\rF \ar@{->>}[d]^{p} \cart& \rP \ar@{->>}[l]\ar@{->>}[d]^{q'|_{\rP}}  
		\ar@{^{(}->}[r] \cart& \tilde \rI \ar@{->>}[d]^{q'}  \ar[r]\cart& \rI
		\ar@{->>}[d]^{q|_{\rI}} \ar@{^{(}->}[r] \cart& \rF\times_{B}\rF
		\ar@{->>}[d]^{q}\\
		G & Q \ar@{->>}[l] \ar@{^{(}->}[r]&\tilde J  \ar[r] & J \ar@{^{(}->}[r]& G\times
		G
	}
	\end{equation*}
	
	Recall that  $G=\Gr(\P^{1}, \P^{5})$,  $S$ is the tautological rank-2 subbundle,
	$g\deff c_{1}(S^{\dual}|_{F})\in \CH^{1}(F)$ is the Pl\"ucker polarization
	class, and $c\deff c_{2}(S^{\dual}|_{F})\in\CH^{2}(F)$. We computed in Lemma
	\ref{lemma:InBV} that $c_{2}(F)=5g^{2}-8c$. In $\CH^\ast(F\times F)$,
	$g_{i}\deff\pr_{i}^{*}(g)$ and $c_{i}\deff\pr_{i}^{*}(c)$ for $i=1, 2$.
	
	\begin{defi}[Tautological ring of $F\times F$]\label{def:tautFxF}
		Let $X$ be a smooth cubic fourfold and $F$ be its Fano variety of lines. We
		define the \emph{tautological ring} of $F\times F$, denoted by $R^\ast(F\times
		F)$, to be the $\Q$-subalgebra of $\CH^\ast(F\times F)$ generated by the classes
		$c_{1}, c_{2}, g_{1}, g_{2}, \Delta, I$, where $\Delta$ and $I$ are the classes
		in $\CH^\ast(F\times F)$ of the diagonal $\Delta_{F}$ and the incidence
		subvariety $I$ respectively.
	\end{defi}
	
	\begin{prop}\label{prop:AllTaut}
		For any point $b\in B^{\circ}$, we have
		\begin{equation*}
		\im\left(\CH^\ast(\rF\times_{B}\rF)\to \CH^\ast(F_{b}\times F_{b})\right) =
		R^\ast(F_{b}\times F_{b}).
		\end{equation*}
	\end{prop}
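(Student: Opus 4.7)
The inclusion $R^\ast(F_b\times F_b)\subseteq\im(\CH^\ast(\rF\times_B\rF)\to\CH^\ast(F_b\times F_b))$ is immediate, since each generator extends to a universal class: $g_i$ and $c_i$ are pulled back from the Schubert classes $c_1(S^\vee),c_2(S^\vee)$ on $G$ via $p\colon\rF\to G$, the diagonal $\Delta$ restricts from the relative diagonal $\Delta_{\rF/B}$, and $I$ restricts from the incidence cycle $[\rI]\in\CH^\ast(\rF\times_B\rF)$.

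For the reverse inclusion, apply Proposition \ref{prop:RestIm} to decompose the image into three pieces. The first piece, $\im(\CH^\ast(G\times G)\to\CH^\ast(F_b\times F_b))$, is generated via K\"unneth and the fact that $\CH^\ast(\Gr(2,6))$ is a polynomial algebra in $c_1(S^\vee)$ and $c_2(S^\vee)$ modulo Schubert relations, by monomials in $g_1,g_2,c_1,c_2$, and so lies in $R^\ast(F_b\times F_b)$. The third piece, $\Delta_\ast\im(\CH^\ast(G)\to\CH^\ast(F_b))$, by the projection formula consists of classes of the form $\Delta\cdot\pr_1^\ast(P(g,c))$, which again lie in $R^\ast(F_b\times F_b)$.

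The main obstacle is the second piece, $i_\ast\im(\CH^\ast(J)\to\CH^\ast(I_b))$. To tackle it I use the natural resolution $\pi_J\colon\tilde J\to J$ together with the compatible resolution $\pi_I\colon\tilde I_b\to I_b$, which fit into a cartesian square over the inclusion $i_0\colon I_b\hookrightarrow J$, with $\tilde i\colon\tilde I_b\hookrightarrow\tilde J$. Since $\pi_J$ is proper birational, $\pi_{J,\ast}$ is surjective on Chow groups, so every class $\beta\in\CH^\ast(J)$ lifts to some $\tilde\beta\in\CH^\ast(\tilde J)$, and we are reduced to computing
$$i_\ast(i_0^\ast\beta)=(i\circ\pi_I)_\ast(\tilde i^\ast\tilde\beta).$$
Now $\tilde J=Q\times_{\P^5}Q$ is a fiber product of the projective bundle $Q\to\P^5$, so $\CH^\ast(\tilde J)$ is generated as a $\Q$-algebra by the pullback $h$ of the hyperplane class from $\P^5$ and the two relative tautological classes $\xi_1,\xi_2$ of the two $\P^4$-bundle structures. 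The composition $\tilde I_b\to\P^5$ factors through $X_b$, so pushforward of powers of $h$ along $\tilde I_b\to F_b\times F_b$ reduces, via the universal line correspondence, to pushing forward powers of the hyperplane class on $X_b$ to $F_b$, which lies in $\Q[g,c]$ by the intersection-theoretic identities on $F_b$ (in the spirit of Lemma \ref{lemma:InBV} and \cite[Theorem 1.4]{MR2435839}). The tautological classes $\xi_i$ contribute factors of $I$ (since $(i\circ\pi_I)_\ast 1=I$) together with polynomial corrections in $g_i,c_i$.

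The delicate bookkeeping step is to control the excess intersection along the relative diagonal $\Delta_{\rF/B}\subset\rI$, where $\pi_I$ is not an isomorphism (the fiber of $\tilde J\to J$ over $(l,l)\in\Delta_G$ is the line $l\cong\P^1$); the corresponding excess correction is supported on $\Delta_{F_b}$ and hence lies in $\Delta\cdot\Q[g_1,c_1]\subset R^\ast(F_b\times F_b)$, which is already contained in the third piece. Collecting these computations shows that the second piece lies in $R^\ast(F_b\times F_b)$, completing the proof.
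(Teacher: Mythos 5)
Your proposal follows essentially the same route as the paper's proof: the decomposition of the image via Proposition \ref{prop:RestIm}, the K\"unneth and projection-formula treatment of the first and third pieces, and the resolution $\tilde J \to J$ realizing $\tilde J$ as a $\P^4\times\P^4$-bundle over $\P^5$, so that the second piece reduces to the classes $g_1^l g_2^m\cdot \Gamma_{h^k}$, which lie in $R^\ast(F_b\times F_b)$ because $\Gamma_{h^k}$ is a polynomial in $g_1,g_2,c_1,c_2$ for $k\geq 1$ and $\Gamma_{h^0}=I$ (\cite[Appendix A]{SV}). The only (harmless) imprecision is in your final bookkeeping: the factor $I$ comes from the pushforward of $h^0$, while the relative hyperplane classes $\xi_i$, being pullbacks of the Pl\"ucker classes $g_i$, contribute the polynomial prefactor $g_1^l g_2^m$ via the projection formula --- not the other way around as your phrasing suggests --- and the diagonal excess discussion is then unnecessary since the globally defined $\Gamma_{h^k}$ already accounts for the behaviour over $\Delta_{F_b}$.
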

	\begin{proof}
		To simplify the notation, let us leave out the subscript $b$. Thanks to
		Proposition \ref{prop:RestIm}, we only need to deal with the following three
		cases\,:
		\begin{itemize}
			\item For $\im\left(\CH^\ast(G\times G)\to \CH^\ast(F\times F)\right)$, it is
			enough to observe that $\CH^\ast(G\times G)$ satisfies the K\"unneth formula
			(since the cycle class map $\CH^\ast(G\times G) \to H^\ast(G\times G,\Q)$ is an
			isomorphism).
			\item For $i_{*}\im \left(\CH^\ast(J)\to \CH^\ast(I)\right)$, consider $$\tilde
			I\deff \left\{ (x, l, l')\in X\times F\times F ~~\vert~~ x\in l\cap l'\right\}
			~~~\text{ and }$$ $$\tilde J\deff \left\{ (x, l, l')\in \P^{5}\times G\times G
			~~\vert~~ x\in l\cap l'\right\}$$ fitting into the diagram
			\begin{equation*}
			\xymatrix{
				\tilde I \ar@{^{(}->}[d] \ar[r]^{\tau'} \cart& I \ar@{^{(}->}[d]
				\ar@{^{(}->}[r]^-{i} \cart& F\times F \ar@{^{(}->}[d]\\
				\tilde J \ar[r]^{\tau} \ar[d]^{\pi}& J \ar@{^{(}->}[r]^-{j}& G\times G\\
				\P^{5}&&
			}
			\end{equation*}
			Denote by $\tilde i=\tau'\circ i$ and $\tilde j=\tau\circ j$. Then any cycle in
			$J$ can be written as $\tau_{*}(\alpha)$ for some $\alpha\in \CH^\ast(\tilde
			J)$. Observe that $\tilde J$ is a $\P^{4}\times \P^{4}$-bundle over $\P^{5}$
			such that the two relative $\rO(1)$ on the fibers are given by $\tilde
			j^{*}(g_{1})$ and $\tilde j^{*}(g_{2})$, respectively. Therefore $\alpha$ is a
			linear combination of cycles of the form $\pi^{*}(h^{k})\tilde
			j^{*}(g_{1}^{l}g_{2}^{m})$ where $k, l,m\in\N$ and $h=\rO_{\P^{5}}(1)$. We have
			\begin{eqnarray*}
				&&i_{*}(\tau_{*}(\pi^{*}(h^{k})\tilde j^{*}(g_{1}^{l}g_{2}^{m}))|_{I})\\
				&=&i_{*}\circ \tau'_{*}\left(\pi^{*}(h^{k})\tilde
				j^{*}(g_{1}^{l}g_{2}^{m})|_{\tilde I}\right)\\
				&=& \tilde i_{*}\left(\pi^{*}(h^{k})|_{\tilde I}\cdot \tilde
				i^{*}(g_{1}^{l}g_{2}^{m})\right)\\
				&=& g_{1}^{l}g_{2}^{m}\cdot i_{*}(\tau_{*}\pi^{*}(h^{k})|_{I})\\
				&=& g_{1}^{l}g_{2}^{m}\cdot \Gamma_{h^{k}},
			\end{eqnarray*}
			where $\Gamma_{h^{k}}$, defined in \cite[Appendix A]{SV}, is the cycle of
			$F\times F$ represented by the subvariety $$\left\{(l, l')\in F\times F
			~~\vert~~ \exists x\in H_{1}\cap \cdots \cap H_{k} \text{  such that } x\in
			l\cap l'\right\},$$ where $H_{1}, \cdots, H_{k}$ are $k$ general hyperplanes in
			$\P^{5}$. It is proven in \cite[Appendix A]{SV} that when $k\geq 1$,
			$\Gamma_{h^{k}}$ is actually a polynomial in $c_{1}, c_{2}, g_{1}, g_{2}$, while
			$\Gamma_{h^{0}}=I$.
			\item For $\Delta_{*}\im\left(\CH^\ast(G)\to \CH^\ast(F)\right)$, let us remark
			that for any $\alpha\in \CH^\ast(F)$, we have
			$\Delta_{*}(\alpha)=\Delta\cdot\pr_{1}^{*}(\alpha)$. Thus it suffices to recall
			that $\im\left(\CH^\ast(G)\to \CH^\ast(F)\right)$ is generated by $g$ and $c$.
		\end{itemize}
	\end{proof}
	
	Consequently, in order to prove Theorem \ref{thm:mainBD2}, we need to study the
	injectivity of the cycle class map restricted to the tautological ring
	$R^\ast(F\times F)$.

	\begin{prop}\label{prop:InjTaut}
		Let $X$ be a smooth cubic fourfold and let $F$ be its Fano variety of lines.
		Then the cycle class map restricted to the tautological ring $R^{*}(F\times F)$
		is injective.
	\end{prop}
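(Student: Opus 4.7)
The strategy is to determine $R^\ast(F\times F)$ as an abstract finite-dimensional $\Q$-algebra in terms of generators and relations, and to verify that every cohomological relation among $g_1, g_2, c_1, c_2, \Delta, I$ lifts to a relation modulo rational equivalence. Since the six generators have bounded degree and $F\times F$ is $8$-dimensional, $R^\ast(F\times F)$ is finite-dimensional in each codimension, so injectivity of the cycle class map reduces to matching the two rings degree by degree. Concretely, it suffices to realize every cohomological relation among these generators by an explicit identity in $\CH^\ast(F\times F)_{\Q}$.

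First I would treat the ``pullback part,'' namely the subring generated by $g_1, g_2, c_1, c_2$. This is the image of $R^\ast(F)\otimes R^\ast(F)$ under exterior product. Injectivity here follows from Voisin's \cite[Theorem 1.4(ii)]{MR2435839}, which shows that the tautological ring $R^\ast(F)$ injects into cohomology, combined with the multiplicative Chow--K\"unneth decomposition of $F$ established in \cite{SV} (which guarantees that the self-dual bigrading on $\CH^\ast(F)$ behaves well under exterior product). Next I would handle the ideal generated by $\Delta$\,: by the projection formula, $\Delta \cdot \pr_i^\ast \alpha = \Delta_\ast(\alpha|_F)$ for $\alpha\in R^\ast(F)$, and the self-intersection $\Delta^2 = \Delta_\ast c_4(T_F)$ is of the same form, since by Remark~\ref{rmk:TautologicalRing} the top-codimension piece $R^4(F)$ is one-dimensional, spanned by $\mathfrak{o}_F$. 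Finally I would handle products involving $I$\,: the crucial input here is \cite[Appendix A]{SV}, where the classes $\Gamma_{h^k}$ are shown to lie in the subring $\langle g_1,g_2,c_1,c_2\rangle$ for $k\geq 1$, and $\Gamma_{h^0}=I$. This gives a systematic mechanism for rewriting the intersection of $I$ with pullback classes back into the pullback subring.

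The main obstacle is to lift the one remaining cohomological relation among the generators to a rational equivalence. All other relations, which roughly speaking control the products of $\Delta$, $I$, and the pullback classes, have been established in $\CH^\ast(F\times F)_{\Q}$ through the work of Voisin \cite{MR2435839} and Shen--Vial \cite{SV}. However, there is one further tautological relation—essentially expressing an iterated incidence/diagonal product in terms of the other generators—that is known to hold in cohomology but that none of the existing results produce at the level of rational equivalence. Establishing this last identity modulo rational equivalence is precisely the content of the joint appendix with Mingmin Shen, and it completes the presentation of $R^\ast(F\times F)$. Once this final relation is in hand, $R^\ast(F\times F)$ is abstractly isomorphic to its image in $H^\ast(F\times F,\Q)$, so the cycle class map restricted to $R^\ast(F\times F)$ is injective, and the proposition follows.
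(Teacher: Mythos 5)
Your proposal is correct and follows essentially the same route as the paper: present $R^\ast(F\times F)$ by generators and relations, observe that all needed relations except one are already available from Voisin \cite{MR2435839} and Shen--Vial \cite{SV}, and invoke the new relation $6\Delta_\ast(g)+g_1g_2(g_1+g_2)\cdot I=Q(g_1,g_2,c_1,c_2)$ from the appendix to close the gap. The only thing the paper does that you leave implicit is the concrete verification that this is indeed the \emph{only} missing relation, carried out by listing monomial generators of $R^i(F\times F)$ in each degree and matching their number against $\dim Hdg^{2i}(F\times F)=1,2,6,8,12,8,6,2,1$ for a general cubic fourfold (the discrepancy occurring precisely in degrees $5$ and $6$, where the appendix relation makes $g_1\Delta$ and $g_1^2\Delta$ redundant).
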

	\begin{proof}
		It suffices to show the proposition for general cubic fourfolds, in which case
		$$\cl: R^\ast(F\times F)\to Hdg^{2*}(F\times F)_{\Q}$$ is surjective. Let us
		show it is injective.\\
		First it is not hard to count the dimensions of the spaces of Hodge classes\,:
		
		\begin{tabular}{c|c|c|c|c|c|c|c|c|c}
			$i$& 0&1&2&3&4&5&6&7&8\\
			\hline
			$\dim Hdg^{2i}$ & 1&2&6&8&12&8&6&2&1
		\end{tabular}

		\noindent It is enough to show that the $R^{i}(F\times F)$ have the same
		dimensions.
		
		The following relations in $R^{*}(F\times F)$ are at our disposal.
		\begin{enumerate}[(i)]
			\item $g_1\cdot \Delta=g_{2}\cdot \Delta\,; c_{1}\cdot\Delta=c_{2}\cdot\Delta$.
			\item For $i=1, 2$, we have $12g_{i}c_{i}=5g_{i}^{3}\,; 4\,
			c_{i}^{2}=g_{i}^{4}$.
			\item Voisin's relation \cite{MR2435839}\footnote{The coefficients are made
				precise by \cite[Proposition 17.4]{SV}.}\,:
			$$I^{2}=2\Delta+I\cdot(g_{1}^{2}+g_{1}g_{2}+g_{2}^{2})+\Gamma_{2}(g_{1},
			g_{2},c_{1},c_{2}),$$ where $\Gamma_{2}$ is a polynomial of weighted degree 4.
			\item In \cite[Proposition 17.5]{SV}, one finds $$\Delta\cdot
			I=6c_{1}\Delta-3g_{1}^{2}\Delta.$$
			\item In \cite[Lemma 17.6]{SV}, there is a polynomial $P$ of weighted degree 4
			such that
			$$c_{1}\cdot I=P(g_{1}, g_{2},c_{1}, c_{2})\,;$$
			$$c_{2}\cdot I=P(g_{2}, g_{1},c_{2}, c_{1}).$$
		\end{enumerate}
		
		Using these relations, we easily get for each degree a list of generators (as
		vector-spaces)\,:
		\begin{itemize}
			\item $R^{0}=\langle \1 \rangle$\,;
			\item $R^{1}=\langle g_{1}, g_{2} \rangle$\,;
			\item $R^{2}=\langle g_{1}^{2}, g_{1}g_{2}, g_{2}^{2}, c_{1}, c_{2},
			I\rangle$\,;
			\item $R^{3}=\langle g_{1}^{3}, g_{1}^{2}g_{2}, g_{1}g_{2}^{2}, g_{2}^{3},
			g_{1}c_{2}, g_{2}c_{1}, g_{1}I, g_{2}I \rangle$\,;
			\item $R^{4}=\langle g_{1}^{4}, g_{1}^{3}g_{2}, g_{1}^{2}g_{2}^{2},
			g_{1}g_{2}^{3}, g_{2}^{4}, g_{1}^{2}c_{2}, g_{2}^{2}c_{1}, c_{1}c_{2},
			g_{1}^{2}I, g_{2}^{2}I, g_{1}g_{2}I, \Delta \rangle$\,;
			\item $R^{5}=\langle g_{1}^{4}g_{2}, g_{1}^{3}g_{2}^{2}, g_{1}^{2}g_{2}^{3},
			g_{1}g_{2}^{4}, g_{1}^{3}c_{2}, g_{2}^{3}c_{1}, g_{1}^{2}g_{2}I,
			g_{1}g_{2}^{2}I, g_{1}\Delta\rangle$\,;
			\item $R^{6}=\langle g_{1}^{4}g_{2}^{2}, g_{1}^{3}g_{2}^{3}, g_{1}^{2}g_{2}^{4},
			g_{1}^{4}c_{2}, g_{2}^{4}c_{1}, g_{1}^{2}g_{2}^{2}I, g_{1}^{2}\Delta 
			\rangle$\,;
			\item $R^{7}=\langle g_{1}^{4}g_{2}^{3}, g_{1}^{3}g_{2}^{4} \rangle$\,;
			\item $R^{8}=\langle g_{1}^{4}g_{2}^{4} \rangle$.
		\end{itemize}
		Observe that we have the same number of generators as the dimension of
		$Hdg^{2i}$ for $i\neq 5 \text{ or } 6$.
		Therefore the cycle class map $R^{i}(F\times F)\to H^{2i}(F\times F, \Q)$ is
		injective for $i=0,1,2,3,4,7,8$.
		\begin{enumerate}
			\item[(vi)] As for $i=5$ (\resp $i=6$), we use the following (new) tautological
			relation  established in the Appendix Theorem \ref{thm:Relation}\,:
			$$6\Delta_{*}(g)+ g_{1}g_{2}(g_{1}+g_{2})\cdot I= Q(g_{1}, g_{2}, c_{1},
			c_{2}),
			$$
			where $Q$ is a polynomial. 
		\end{enumerate}
		Therefore the generator $g_{1}\Delta=\Delta_{*}(g)$ (\resp $g_{1}^{2}\Delta$) is
		redundant, hence $R^{i}(F\times F)\to H^{2i}(F\times F)$ is also injective in
		these two degrees.
	\end{proof}
	
	\begin{rmk}
		As a manifestation of the same principle as in \S \ref{subsect:beyond}, the
		extra difficulty encountered here (excess dimension of $I$, the new tautological
		relation \emph{etc.}) can be traced back to the lack of positivity of the vector
		bundle $E=\Sym^{3}S^{\vee}$ on $G=\Gr(\P^{1}, \P^{5})$, namely it satisfies only
		$(\star_{1})$ but not $(\star_{2})$, where $S$ is the tautological subbundle on
		$G$.
	\end{rmk}

	We can now easily conclude the proof of Theorem \ref{thm:mainBD2}\,:
	\begin{proof}[Proof of Theorem \ref{thm:mainBD2}]
		As the standard conjecture is proven for $F_{b}$ in \cite{MR3040747} (this can
		also be seen more elementarily by noting that the incidence correspondence $I$
		induces an isomorphism from $H^6(F_b,\Q)$ to $H^2(F_b,\Q)$), numerical
		equivalence coincides with homological equivalence on powers of $F_b$. Since the
		moduli stack $\mathcal{C}$ is dominated by the parameter space $B^{\circ}$ of
		smooth cubic fourfolds, by Remark \ref{rmk:Parameter}, we only need to show the
		conclusion for the family $\rF^{\circ}\times_{B^{\circ}}\rF^{\circ}\to
		B^{\circ}$. Since any cycle of $\rF^{\circ}\times_{B^{\circ}}\rF^{\circ}$ is the
		restriction of a cycle of $\rF\times_{B}\rF$,  it is enough to show that for any
		$b\in B^{\circ}$, the restriction of a cycle $\gamma\in
		\CH^\ast(\rF\times_{B}\rF)$ to $F_{b}\times F_{b}$ is zero if and only if it is
		homologically trivial, which is proven by combining Proposition
		\ref{prop:AllTaut} and Proposition \ref{prop:InjTaut}.
	\end{proof}
	
	With Theorem \ref{thm:mainBD2} proven, we proceed to study the $0$-cycles and
	codimension-$2$ cycles of the LLSvS hyper-K\"ahler eightfolds. The key input is
	Voisin's degree $6$ dominant rational map \cite[Proposition 4.8]{MR3524175}
	$$F\times F\dasharrow Z.$$
	Let $B^{\circ\circ}$ be the Zariski open subset of $B$ parameterizing smooth
	cubic fourfolds not containing a plane. 
	Consider the family version of Voisin's construction (over $B^{\circ\circ}$)\,:
	$\psi: \rF^{\circ\circ}\times_{B^{\circ\circ}}\rF^{\circ\circ}\dasharrow \rZ$.
	\begin{proof}[Proof of Theorem \ref{thm:mainLLSvS}]
		Take a resolution of indeterminacies\,:
		\begin{equation*}
		\xymatrix{
			\tilde{\rF^{\circ\circ}\times_{B^{\circ\circ}}\rF^{\circ\circ}} \ar[d]_{\tau}
			\ar[dr]^{f} & \\
			\rF^{\circ\circ}\times_{B^{\circ\circ}}\rF^{\circ\circ}  \ar@{-->}[r]_-{\psi}&
			\rZ.
		}
		\end{equation*}
		For $(i)$, let $\gamma\in\CH^{8}(\rZ)_{}$ be a relative $0$-cycle whose degree
		on fibers is zero. Then, for any $b\in B^{\circ\circ}$,
		$$\left(\tau_{*}f^{*}(\gamma)\right)|_{F_{b}\times
			F_{b}}=\tau_{b*}\left(f^{*}(\gamma)|_{\tilde{F_{b}\times
				F_{b}}}\right)=\tau_{b*}f_{b}^{*}\left(\gamma|_{Z_{b}}\right).$$ Thus
		$\tau_{*}f^{*}(\gamma)$ is a relative $0$-cycle of fiber degree zero on
		$\rF^{\circ\circ}\times_{B^{\circ\circ}}\rF^{\circ\circ}$ and by Theorem
		\ref{thm:mainBD2}, we know that
		$$\tau_{b*}f_{b}^{*}\left(\gamma|_{Z_{b}}\right)=0 \text{ in }
		\CH^{8}(F_{b}\times F_{b}).$$
		For $b\in B^{\circ\circ}$ general, $\tau_{b}$ is birational hence induces an
		isomorphism on $\CH_{0}$, hence $f_{b}^{*}\left(\gamma|_{Z_{b}}\right)=0$.
		Moreover, since $f_{b}$ is generically finite of degree 6 (still under the
		assumption that $b$ is general), we have
		$$\gamma|_{Z_{b}}=\frac{1}{6}f_{b*}f_{b}^{*}(\gamma|_{Z_{b}})=0.$$
		A specialization argument shows that $\gamma|_{Z_{b}}=0$ for all $b\in B^{\circ
			\circ}$.\\
		As for $(ii)$, \ie, codimension-$2$ cycles\,: since $H^{3}(Z_{b},
		\Q)=H^{3}(F_{b}\times F_{b})=0$, any cycle in $\CH^{2}(Z_{b})$ or in
		$\CH^{2}(F_{b}\times F_{b})$ is homologically trivial if and only if its
		Abel--Jacobi invariant vanishes. Now the same proof as in $(i)$ works because
		the Abel--Jacobi kernel for codimension-$2$ cycles $\CH^{2}_{AJ}$, just as
		$\CH_{0}$, is a birational invariant (for smooth projective varieties), hence 
		$$\tau_{b*}: \CH^{2}(\tilde{F_{b}\times F_{b}})_{hom}\to \CH^{2}(F_{b}\times
		F_{b})_{hom}$$ 
		is an isomorphism.
	\end{proof}
	
	\begin{proof}[Proof of Corollary \ref{cor:BVChern}]
		In view of Theorem \ref{thm:mainLLSvS}, this is just a special case of
		Proposition \ref{prop:ChernBV}.
	\end{proof}
	
	\begin{rmk} As above, let $Y_b$ be a smooth cubic fourfold not containing a
		plane, and $Z_b$ the associated LLSvS eightfold.
		Our argument to prove Theorem \ref{thm:mainLLSvS} breaks down for $\CH^j(Z_b)$
		with $2<j<8$, because Voisin's map is not a morphism. It is known \cite{Mura},
		\cite{Chen} that the indeterminacy locus of Voisin's map is the incidence
		subvariety $I\subset F_b\times F_b$, and a resolution of indeterminacy is
		obtained by blowing up $I$. To extend Theorem~\ref{thm:mainLLSvS} to the full
		Chow ring $\CH^\ast(Z_b)$, it remains to prove analogues of Propositions
		\ref{prop:AllTaut} and \ref{prop:InjTaut} for $\mathcal{I}$, the family of
		incidence varieties. 
	\end{rmk}

	\newpage
	\appendix
	
	\section{On a new tautological relation}
	\begin{center}
		\footnotesize{{\sc Lie Fu, Robert Laterveer, Mingmin Shen and Charles Vial}}
	\end{center}
	
	Let $X$ be a smooth cubic fourfold and $F$ be its Fano variety of lines, which
	is a hyper-K\"ahler fourfold by \cite{MR818549}. In this appendix, we establish
	a new relation (Theorem \ref{thm:Relation}), up to rational equivalence, among
	3-dimensional tautological cycle classes of $F\times F$. Some interesting
	applications of this tautological relation are also discussed. We try to keep
	the appendix as self-contained as possible.

	Throughout this appendix, let us fix the following notation\,:
	
	\begin{itemize}
		\item $\P^{5}$ is the ambient space and $X$ is a smooth cubic hypersurface in
		it.
		\item $h:=c_{1}(\rO_{\P^{5}}(1))$\,; $h|_{X}$ is still denoted by $h$.
		\item $G:=\Gr(\P^{1}, \P^{5})\isom \Gr(2,6)$ is the Grassmannian of projective
		lines in $\P^{5}$.
		\item $F:=F(X)$ is the Fano variety of lines of $X$.
		\item $S$ is the tautological subbundle on $G$.
		\item $g:=c_{1}(S^{\vee})$ is the Pl\"ucker polarization class\,; $g|_{F}$ is
		still denoted by $g$.
		\item $c:=c_{2}(S)$\,; $c|_{F}$ is still denoted by $c$.
		\item $h_{i}:=\pr_{i}^{*}(h)$, $g_{i}:=\pr_{i}^{*}(g)$ and
		$c_{i}:=\pr_{i}^{*}(c)$ where $\pr_{i}$ is the $i$-th projection.
		\item If $P:=\P(S|_{F})$ denotes the incidence variety in $F\times X$, then the
		natural projection $p: P\to F$ is  the universal projective line and $q: P\to X$
		is the evaluation map.
		\item $I\subset F\times F$ is the incidence subvariety parameterizing pairs of
		intersecting lines contained in~$X$.
		\item $\tilde I:=P\times_{X} P$. Note that $I$ is its image in $F\times F$ via
		the natural projection.
	\end{itemize}

	The main result of this appendix is the following.
	\begin{thm}\label{thm:Relation} 
		There exists a polynomial $Q$ (of weighted degree 5) such that the following
		equality holds in $\CH^{5}(F\times F)$\,:
		\begin{equation}\label{eq:relation}
		6\Delta_{*}(g)+ g_{1}g_{2}(g_{1}+g_{2})\cdot I= Q(g_{1}, g_{2}, c_{1}, c_{2}),
		\end{equation}
		where $\Delta: F\inj F\times F$ is the diagonal embedding.
	\end{thm}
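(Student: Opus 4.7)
The plan is to work on the universal line $P = \P_F(S|_F)$ with projections $p \colon P \to F$ and $q \colon P \to X$, and tautological class $\xi = c_1(\rO_P(1)) = q^* h$ satisfying the Grothendieck relation $\xi^2 = p^* g \cdot \xi - p^* c$. This yields the Segre pushforwards $p_*\xi^k \in \CH^{k-1}(F)$ for $k=0,\dots,5$ as explicit polynomials in $g, c$ (e.g.\ $p_*\xi = 1$, $p_*\xi^2 = g$, $p_*\xi^3 = g^2 - c$, $p_*\xi^4 = g^3 - 2gc$, $p_*\xi^5 = g^4 - 3g^2c + c^2$), so that via the projection formula and K\"unneth, any pushforward $(p \times p)_*$ of a polynomial in $\xi_j$'s and pullbacks from $F$ reduces to a polynomial in $g_j, c_j$. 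The central auxiliary object is the fiber square $\tilde I := P \times_X P \subset P \times P$, which resolves the incidence $I \subset F \times F$ birationally; its exceptional divisor $\Delta_P$ projects as the $\P^1$-bundle $p \colon P \to F$ onto $\Delta_F \subset F \times F$.

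The key new input is a Chow-theoretic refinement of $[\Delta_X] \in \CH^4(X \times X)$ coming from excess intersection. Since $N_{X/\P^5} = \rO_X(3)$, the non-transverse intersection $\Delta_{\P^5} \cap (X \times X)$ in $\P^5 \times \P^5$ yields $3 h_1 \cdot [\Delta_X] = \sum_{i=0}^5 h_1^i h_2^{5-i}$ in $\CH^5(X \times X)$, which, using $h^5 = 0$ on $X$, recasts as a decomposition
\[ [\Delta_X] = \tfrac{1}{3} \sum_{i=0}^{4} h_1^i h_2^{4-i} + T, \qquad h_1 \cdot T = h_2 \cdot T = 0 \text{ in } \CH^5(X\times X). \]
Pulling back via $q \times q$ (a clean pullback, as $\tilde I = P\times_X P$ has the expected codimension $4$) produces $[\tilde I] = \tfrac{1}{3} \sum_{i=0}^4 \xi_1^i \xi_2^{4-i} + \tilde T$ in $\CH^4(P \times P)$ with $\xi_j \cdot \tilde T = 0$, and pushing down via $(p \times p)_*$ gives the codimension-2 identity $I = \tfrac{1}{3}(g_1^2 + g_1 g_2 + g_2^2 - c_1 - c_2) + U$, where $U := (p \times p)_* \tilde T$.

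The relation of Theorem \ref{thm:Relation} is then obtained by combining this decomposition with Voisin's relation $I^2 = 2\Delta + I(g_1^2 + g_1 g_2 + g_2^2) + \Gamma_2(g_j, c_j)$ and the polynomial identities $c_j \cdot I = P_j(g_1, g_2, c_1, c_2)$ from \cite[Lemma 17.6]{SV}, together with intersection-theoretic analysis on $P \times P$. Concretely, one intersects the decomposition of $[\tilde I]$ with a carefully chosen multiplier $\Phi \in \CH^*(P \times P)$ involving both the $\xi_j$'s and pullbacks $p_j^*g$, $p_j^*c$, and pushes forward. On one hand the polynomial part $\tfrac{1}{3}\sum \xi_1^i \xi_2^{4-i}$ produces a polynomial in $g_j, c_j$ via the Segre table; on the other hand the $\tilde T$-part is controlled by the relation $(p \times p)_* \tilde T = U$, which by substitution into Voisin and after using the $c_j I$-identities produces, after algebraic manipulation, exactly $6\Delta_*(g) + g_1 g_2(g_1 + g_2) I$ on one side and a polynomial $Q(g_j, c_j)$ on the other. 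The coefficient $6 = 3 \cdot 2$ arises naturally: the factor 3 from the cubic defining equation of $X$ (entering through the denominator in the decomposition of $[\Delta_X]$), and the factor 2 from a symmetrization involving $g_1$ and $g_2$.

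The principal technical obstacle is the precise identification of the diagonal contribution under pushforward. One must verify, via residual intersection analysis along $\Delta_P \subset \tilde I$ --- using the fact that $\Delta_P \to \Delta_F$ is the $\P^1$-bundle $p \colon P \to F$ on which $\xi$ has degree $1$ per fiber, together with the Segre class formulas $p_*\xi^k$ --- that in the final identity only the $\Delta_*(g)$-term survives with coefficient exactly $6$, and that spurious higher-power diagonal terms $\Delta_*(g^k)$ for $k \geq 2$ either cancel outright or are absorbed into $Q$ via the known proportionalities $gc \propto g^3$ and $c^2 \propto g^4$ in $R^*(F)$ (see \cite[Lemmas 3.2 and 3.5]{MR2435839}). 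A conceptually cleaner route is to realize $\tilde I$ as the blowup of $I$ along $\Delta_F$ with exceptional divisor $\Delta_P$ and apply the Chow-theoretic blowup formula, which separates generic (producing $I$-multiples) and diagonal (producing $\Delta_*$-classes) contributions, though the coefficient tracking remains the delicate step.
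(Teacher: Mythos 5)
Your setup coincides with the paper's: the universal line $P=\P(S|_F)$, the fiber square $\tilde I=P\times_X P$, the excess--intersection identity $3\,\Delta_{X*}(h^{i+1})=\Delta_{\P*}(h^i)|_{X\times X}$ (which is exactly how the paper proves that the classes $\Gamma_{h^i}=(p,p)_*(q,q)^*\Delta_{X*}(h^i)$ are polynomials in $g_j,c_j$ for $i\geq 1$), Voisin's relation for $I^2$, and the relations $c_j\cdot I=P(\ldots)$ from [SV, Lemma 17.6]. The gap is that the one step that actually produces the diagonal term is never carried out. Your proposed mechanism --- intersect the decomposition of $[\tilde I]$ with a multiplier $\Phi$ built from the $\xi_j$ and pullbacks from $F$, then push forward --- cannot yield $\Delta_*(g)$: by the projection formula, $(p,p)_*\bigl(p_1^*\alpha\, p_2^*\beta\,\xi_1^a\xi_2^b\cdot[\tilde I]\bigr)=\alpha_1\beta_2\cdot\Gamma_{h^{a+b}}$, which is a polynomial in $g_j,c_j$ when $a+b\geq 1$ and a multiple of $\alpha_1\beta_2\cdot I$ when $a+b=0$; no diagonal class ever appears. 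The diagonal enters only when one multiplies \emph{two} cycles both supported on $I$ (e.g.\ $I\cdot\Gamma_h$), and there the whole content is a normal-bundle computation that you flag as ``the principal technical obstacle'' but do not perform. Attempting to bypass it by multiplying Voisin's relation $I^2=2\Delta+I(g_1^2+g_1g_2+g_2^2)+\Gamma_2$ by $g_1+g_2$ and substituting $I=\tfrac13(\ldots)+U$ only leads back to an expression involving $I\cdot U$, i.e.\ to the same self-intersection one set out to compute; the argument is circular without new geometric input.

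What the paper actually does at this point is: (a) show that $I_0:=I\setminus\Delta_F\inj F\times F\setminus\Delta_F$ is a local complete intersection and compute $c_1,c_2$ of its normal bundle $N$ via the section $\tilde I\setminus\Delta_P$ of $P\times P\to F\times F$ and the sequence $0\to \pr_1^*T_{P/F}\oplus \pr_2^*T_{P/F}\to q_0^*T_X\to N\to 0$, obtaining $c_2(N)=(g_1^2+g_1g_2+g_2^2)|_{I_0}-3(g_1+g_2)|_{I_0}q_0^*h+6q_0^*h^2$; (b) apply the self-intersection formula to get $I\cdot\Gamma_h=\iota_*(q_0^*h\cdot c_2(N))$ away from $\Delta_F$, a polynomial by the $\Gamma_{h^i}$ computations; (c) invoke the localization sequence, so that on all of $F\times F$ one has $I\cdot\Gamma_h+\Delta_*(D)=P_1$ for some $D\in\CH^1(F)$, and use that $\CH^1(F)=\Q\cdot g$ for a \emph{general} cubic fourfold to write $D=\lambda g$; (d) determine $\lambda=2$ cohomologically; (e) specialize to arbitrary smooth cubic fourfolds. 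Steps (a)--(e) are all absent from your write-up: in particular you never justify why the diagonal correction is a multiple of $\Delta_*(g)$ (this needs the generality assumption plus specialization, not a formal cancellation), your heuristic ``$6=3\cdot 2$'' does not substitute for the cohomological evaluation of $\lambda$, and the worry about terms $\Delta_*(g^k)$, $k\geq 2$, is moot since such classes do not live in $\CH^5(F\times F)$. As it stands the proposal identifies the right cast of characters but omits the proof's essential computation.
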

	
	\begin{rmk}
		The polynomial $Q$ is not unique. A cohomological computation shows that
		\[
		Q(g_{1}, g_{2}, c_{1}, c_{2})= \frac{1}{4}(g_1^4g_2 +g_1g_2^4)
		+\frac{7}{12}(g_1^3g_2^2 + g_1^2g_2^3)
		\]
		is one possible choice of $Q$.
	\end{rmk}

	\subsection{Proof of the tautological relation}
	
	We have the following diagram
	\begin{equation}\label{diag:Appendix}
	\xymatrix{
		\tilde I  \ar[r]  \ar@{^{(}->}[d]^{i} & X \ar@{^{(}->}[d]^{\Delta_{X}} \\
		P\times P  \ar[r]^{(q,q)} \ar[d]^{(p,p)} & X\times X \\
		F\times F &\\
	}
	\end{equation}
	Let us first introduce some natural cycles on $F\times F$. For any $i\in \N$,
	define
	$$\Gamma_{h^{i}}:=(p,p)_{*}(q,q)^{*}({\Delta_{X}}_{*}(h^{i}))\in
	\CH^{i+2}(F\times F).$$
	Note that $\Gamma_{h^{0}}$ is nothing but the incidence correspondence $I$.
	Geometrically, $\Gamma_{h^{i}}$ is represented by the locus of pairs of lines
	contained in $X$ intersecting at a point which lies on the intersection of $i$
	general hyperplane sections of $X$.
	
	\begin{lemma}\label{lemma:Gammah}
		For any $i>0$, the cycle $\Gamma_{h^{i}}$ is a polynomial of $g_{1}, g_{2},
		c_{1}, c_{2}$. Precisely,
		\begin{eqnarray*}
			\Gamma_{h}&=&\frac{1}{18}(g_{1}^{3}+6g_{1}^{2}g_{2}+6g_{1}g_{2}^{2}+g_{2}^{3}-6g_{1}c_{2}-6g_{2}c_{1})\,;\\
			\Gamma_{h^{2}}&=& \frac{1}{18}(g_{1}^{3}g_{2} +6 g_{1}^{2}g_{2}^{2}+
			g_{1}g_{2}^{3} -6g_{1}^{2}c_{2}-6g_{2}^{2}c_{1}+ 6c_{1}c_{2})\,;\\
			\Gamma_{h^{3}}&=&\frac{1}{18}(g_{1}^{3}g_{2}^{2}+g_{1}^{2}g_{2}^{3}-g_{1}^{3}c_{2}-g_{2}^{3}c_{1})\,;\\
			\Gamma_{h^{4}}&=&\frac{1}{108}g_{1}^{3}g_{2}^{3}.
		\end{eqnarray*}
	\end{lemma}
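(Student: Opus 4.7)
To prove the lemma, my plan is to compute $\Gamma_{h^i}$ directly from its definition using the Cartesian square in Diagram~\eqref{diag:Appendix}. By the projection formula combined with the identity $\Delta_{X,*}(h^i) = (h^i \times 1)\cdot [\Delta_X]$, and using that $(q,q)$ is transverse to $\Delta_X$ at a generic point of the fiber product $\tilde I := P\times_X P$ (so that $(q,q)^*[\Delta_X] = [\tilde I]$ in $\CH^4(P\times P)$), one obtains
\[
\Gamma_{h^i} = (p,p)_* \bigl( H_1^i \cdot [\tilde I] \bigr),
\]
where $H := c_1(\mathcal{O}_P(1)) = q^*h$ (the equality $q^*h = H$ coming from the standard identification of $P = \P(S|_F)$ as a subvariety of $F \times \P^5$) and $H_1 := \pi_1^* H$ is the pullback from the first factor.

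The key step, and the principal technical hurdle, is the computation of $[\tilde I] \in \CH^4(P\times P)$. One realizes $\tilde I$ as the degeneracy locus of the natural morphism
\[
\mathcal{O}_P(-1) \boxplus \mathcal{O}_P(-1) \;\longrightarrow\; V \otimes \mathcal{O}_{P\times P},
\]
whose failure to be injective precisely encodes the coincidence of the two marked points of $\P^5$. On the ambient $\P(S)\times\P(S)$, Porteous' formula identifies the class of the corresponding (larger) degeneracy locus $\tilde J$ as $\sum_{a+b=5} H_1^a H_2^b$. To descend to $P\times P \subset \P(S)\times\P(S)$ (a regular embedding of codimension $8$ cut out by the pullback of the section of $\Sym^3 S^\vee$ defining $F \subset G$), one invokes the excess intersection formula: since $\dim(\tilde J \cap (P\times P)) = 6$ exceeds the expected dimension $5$ by one, there is a rank-$1$ excess normal bundle contribution that must be extracted carefully. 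This yields $[\tilde I]$ as an explicit combination of monomials in $H_1, H_2$ with coefficients pulled back from $F \times F$.

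Finally, push down via $(p,p)_*$. On the $\P^1$-bundle $P \to F$, the tautological relation $H^2 = gH - c$ gives $p_*(1)=0$, $p_*(H)=1$, $p_*(H^2)=g$, $p_*(H^3)=g^2-c$, $p_*(H^4)=g^3-2gc$, and so on. Combined with the decomposition of $\CH^*(P\times P)$ as a free $\CH^*(F\times F)$-module of rank $4$ with basis $\{1, H_1, H_2, H_1H_2\}$, applying $(p,p)_*$ to $H_1^i\cdot[\tilde I]$ term-by-term produces the four stated polynomial formulas. A geometric cross-check for $i = 4$: $\Gamma_{h^4}$ equals $3 \cdot [C_x]\times [C_x]$ (the factor $3$ coming from $h^4 = 3[\mathrm{pt}]$ in $\CH^4(X)$ since $\deg X = 3$), where $C_x \subset F$ is the curve of lines through a generic $x \in X$ of Pl\"ucker degree $6$; combined with $\int_F g^4 = 108$, this recovers the coefficient $\tfrac{1}{108}$. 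Once the excess contribution in the middle step is pinned down, the remaining computations are essentially mechanical bookkeeping in Chern classes.
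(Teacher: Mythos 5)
Your overall strategy --- reduce everything to a computation on $P\times P$ and push down via $(p,p)_*$ using $p_*(H^j)=f_j$ --- is genuinely different from the paper's, and your endgame (the free-module decomposition of $\CH^\ast(P\times P)$ over $\CH^\ast(F\times F)$, the cross-check $\Gamma_{h^4}=3\,[C_x]\times[C_x]=\tfrac{1}{108}g_1^3g_2^3$) is sound. But the central step, ``this yields $[\tilde I]$ as an explicit combination of monomials in $H_1,H_2$ with coefficients pulled back from $F\times F$,'' cannot work as stated, for two reasons. First, $(p,p)_*[\tilde I]=\Gamma_{h^0}=I$, and the incidence class $I$ is \emph{not} a polynomial in $g_1,g_2,c_1,c_2$ (its cohomology class involves the transcendental part of $H^4(F)$ --- this is precisely why the lemma is restricted to $i>0$). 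Hence $[\tilde I]$ admits no explicit tautological expression; in the basis $\{1,H_1,H_2,H_1H_2\}$ its coefficients necessarily involve $I$ itself. Second, the excess intersection formula applied to $\tilde J\cap(P\times P)$ does not output the fundamental class $[\tilde I]\in\CH^4(P\times P)$: it outputs the refined product $j^\ast[\tilde J]$, which lives in $\CH^5(P\times P)$, is supported on $\tilde I$, and equals $c_1(E)\cap[\tilde I]$ for the rank-one excess bundle $E\cong q_0^\ast\rO_X(3)$, i.e.\ $3H_1\cdot[\tilde I]$. You are computing a class one codimension too deep to recover $[\tilde I]$.

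The gap is repairable, because for $i\geq 1$ you only ever need $H_1^i\cdot[\tilde I]=\tfrac13 H_1^{i-1}\cdot\bigl(3H_1[\tilde I]\bigr)=\tfrac13 H_1^{i-1}\cdot\sum_{a+b=5}H_1^aH_2^b\big|_{P\times P}$, never $[\tilde I]$ itself --- but you would still have to justify that $\tilde I$ is the only component of $j^{-1}(\tilde J)$ contributing, with multiplicity one, and that $(q,q)^![\Delta_X]=[\tilde I]$ even though $q$ is not flat. The paper avoids all of this by working one floor down: the excess intersection formula for $\Delta_X\subset \P^5\times\P^5$ restricted to $X\times X$ (excess bundle $\rO_X(3)$) gives $3\Delta_{X*}(h^{i+1})=\Delta_{\P*}(h^i)|_{X\times X}$, so $\Delta_{X*}(h^i)=\tfrac13(h_1^4h_2^i+\cdots+h_1^ih_2^4)$ is already decomposable for $i\geq1$, and $\Gamma_{h^i}=\tfrac13\sum f_a\times f_b$ drops out with no mention of $[\tilde I]$, Porteous, or any excess locus inside $P\times P$. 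I recommend either adopting that shortcut or rewriting your middle step so that the object you actually compute is $3H_1\cdot[\tilde I]$ rather than $[\tilde I]$.
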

	
	\begin{proof}
		A slightly more complicated (but equivalent) form of the first two formulas is
		proven in \cite[Proposition A.6]{SV}. For the convenience of the reader, we give
		a complete proof here. The excess intersection formula \cite[\S 6.3]{MR1644323}
		applied to the following cartesian diagram 
		\begin{displaymath}
		\xymatrix{
			X  \ar[r]  \ar[d]^{\Delta_{X}} \cart& \P^{5} \ar[d]^{\Delta_{\P}}\\
			X\times X \ar[r] & \P^{5}\times \P^{5}
		}
		\end{displaymath}
		yields that, for any $i \in \N$, we have in $\CH^\ast(X\times X)$
		$$3{\Delta_{X}}_{*}(h^{i+1})={\Delta_{\P}}_{*}(h^{i})|_{X\times X}.$$
		From ${\Delta_{\P}}_{*}(h^{i})=h_{1}^{5}h_{2}^{i}+\cdots +h_{1}^{i}h_{2}^{5}$,
		we obtain
		$${\Delta_{X}}_{*}(h^{i})=\frac{1}{3}\left(h_{1}^{4}h_{2}^{i}+\cdots
		+h_{1}^{i}h_{2}^{4}\right).$$
		Therefore 
		\begin{eqnarray*}
			\Gamma_{h^{i}}&=&(p,p)_{*}(q,q)^{*}(\Delta_{X,*}(h^{i}))\\
			&=&\frac{1}{3}(p,p)_{*}(q,q)^{*}\left(h_{1}^{4}h_{2}^{i}+\cdots
			+h_{1}^{i}h_{2}^{4}\right)\\
			&=&\frac{1}{3}\left(f_{4}\times f_{i}+\cdots+f_{i}\times f_{4}\right),
		\end{eqnarray*}
		where $f_{j}:=p_{*}q^{*}(h^{j})$ and where $\times$ is the exterior product
		$\pr_{1}^{*}(-)\cdot \pr_{2}^{*}(-)$. All the formulas in the statement then
		follow from the  facts that $f_{1}=1$, $f_{2}=g$, $f_{3}=g^{2}-c$ and
		$f_{4}=\frac{1}{6}g^{3}$ (\cf \cite[Lemma A.4]{SV}, \cite[Lemma 3.2]{MR2435839}
		and \cite[Lemma 3.5]{MR2435839}).
	\end{proof}
	
	Define $I_{0}:=I\backslash \Delta_{F}$ to be the subvariety of $F\times F$
	parameterizing pairs of distinct intersecting lines in~$X$. We then have a
	natural morphism 
	$$q_{0}: I_{0}\to X$$
	which sends two lines to their intersection point. 
	
	\begin{lemma}\label{lemma:normalbundle}
		The inclusion $I_{0}\inj F\times F\backslash\Delta_{F}$ is a local complete
		intersection and the Chern classes of the normal bundle $N:=N_{I_{0}/{F\times
				F\backslash\Delta_{F}}}$ are given by
		\begin{eqnarray*}
			c_{1}(N)&=&(g_{1}+g_{2})|_{I_{0}}-q_{0}^{*}(h)\,;\\
			c_{2}(N)&=&(g_{1}^{2}+g_{1}g_{2}+g_{2}^{2})|_{I_{0}}-3(g_{1}+g_{2})|_{I_{0}}\cdot
			q_{0}^{*}(h)+6q_{0}^{*}(h^{2}).
		\end{eqnarray*}
	\end{lemma}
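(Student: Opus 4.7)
The plan is to identify $I_{0}$ with an open subvariety of $\tilde I = P\times_{X} P$ and to compute the normal bundle using the defining Cartesian square. First I would observe that the natural projection $\tilde I \to I$ is an isomorphism over $I_{0}$, because for any $(\ell,\ell')\in I_{0}$ the intersection $\ell\cap\ell'$ consists of a single point. Writing $\Delta_{P}\subset\tilde I$ for the diagonal copy of $P$ (which surjects onto $\Delta_{F}$), the restriction of $(p,p)$ yields an isomorphism $\tilde I_{0}:=\tilde I\setminus\Delta_{P}\xrightarrow{\sim} I_{0}$.

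Next, diagram~\eqref{diag:Appendix} exhibits $\tilde I$ as the pullback of $\Delta_{X}\inj X\times X$ along $(q,q)\colon P\times P\to X\times X$. Since $\Delta_{X}$ is a regular embedding of codimension $4$ with normal bundle $T_{X}$, and a dimension count confirms that $\tilde I$ has the expected codimension $4$ in $P\times P$, the pullback is again a regular embedding with normal bundle $q_{\tilde I}^{*}T_{X}$, where $q_{\tilde I}\colon\tilde I\to X$ is the tautological map. Concretely, the quotient map $T_{P\times P}|_{\tilde I}\twoheadrightarrow q_{\tilde I}^{*}T_{X}$ sends a tangent vector $(w_{1},w_{2})$ to $dq(w_{1})-dq(w_{2})\in T_{x}X$.

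I would then relate this to $N=N_{I_{0}/(F\times F\setminus\Delta_{F})}$ via the smooth $\mathbb{P}^{1}\times\mathbb{P}^{1}$-bundle $(p,p)\colon P\times P\to F\times F$. Restricting to $\tilde I_{0}\cong I_{0}$ and tracking the normal sequence of $\tilde I_{0}\inj P\times P$ together with the relative tangent sequence for $(p,p)$, a short diagram chase shows that $T_{P/F,1}\oplus T_{P/F,2}$ maps into $q_{0}^{*}T_{X}$ via $(v_{1},v_{2})\mapsto v_{1}-v_{2}$ and that its cokernel is $N$. The crucial injectivity amounts to the observation that $T_{P/F}|_{(\ell,x)}$ embeds in $T_{x}X$ as the sub-line $\ell/\langle x\rangle$; over $I_{0}$ the two sub-lines $\ell/\langle x\rangle$ and $\ell'/\langle x\rangle$ of $T_{x}X$ are distinct (because $\ell\neq\ell'$) and meet only in zero. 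We thus obtain on $I_{0}$ a short exact sequence
\begin{equation*}
0\to T_{P/F,1}\oplus T_{P/F,2}\to q_{0}^{*}T_{X}\to N\to 0,
\end{equation*}
which in particular shows that $N$ is locally free of rank $2$, establishing the local complete intersection property.

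For the Chern class formulas I would then apply $c(N)=c(q_{0}^{*}T_{X})/c(T_{P/F,1}\oplus T_{P/F,2})$. The relative Euler sequence for $P=\mathbb{P}(S|_{F})$, together with the identification $\rO_{P}(1)=q^{*}\rO_{X}(1)$ (since $\rO_{P}(-1)_{(\ell,x)}=\langle x\rangle$), yields $c_{1}(T_{P/F})=2q^{*}h-p^{*}g$, so that $c_{1}(T_{P/F,i})=2q_{0}^{*}h-g_{i}|_{I_{0}}$. The Euler sequence for $X\subset\mathbb{P}^{5}$ yields $c_{1}(T_{X})=3h$ and $c_{2}(T_{X})=6h^{2}$. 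A direct expansion of the quotient then produces the stated formulas. The main obstacle is the injectivity in the third paragraph: along $\Delta_{F}$ one has $\ell=\ell'$, so the two sub-lines of $T_{x}X$ coincide and the map fails to be injective, which is precisely why the lemma only asserts the l.c.i.~property away from the diagonal.
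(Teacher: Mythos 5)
Your proof is correct and follows essentially the same route as the paper's: both realize $I_0$ as the section $\tilde I\setminus\Delta_P$ of the $\P^1\times\P^1$-bundle $P\times P\to F\times F$, identify $N_{\tilde I_0/P\times P}$ with $q_0^*T_X$ via the Cartesian square over $\Delta_X$, extract the short exact sequence $0\to T_{P/F,1}\oplus T_{P/F,2}\to q_0^*T_X\to N\to 0$, and expand $c(N)=\frac{(1+h)^6}{(1+3h)(1+2q_0^*h-g_1)(1+2q_0^*h-g_2)}$. The only cosmetic difference is that you justify the left-hand injectivity geometrically (the two sub-lines $\ell/\langle x\rangle\neq\ell'/\langle x\rangle$ in $T_xX$), where the paper gets the sequence formally from the section property and cites Fulton B.7.5 for the l.c.i.\ claim; both are fine.
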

	\begin{proof}
		Note that $\tilde{I}\subset P\times P$ is a local complete intersection (since
		$\tilde{I}\subset P\times P$ is obtained from the local complete intersection
		$\Delta_X\subset X\times X$ via base change) and that $\tilde{I}\backslash
		\Delta_P \subset P\times P$ is a section of $P\times P\to F\times F$ over $I_0$.
		We apply \cite[B.7.5]{MR1644323} and see that $I_0\subset F\times F\backslash
		\Delta_F$ is a local complete intersection. Using the section
		$\tilde{I}\backslash \Delta_P$, we view $I_0$ as a subvariety of $P\times P$.
		Then we get the following short exact sequence
		\[ 0 \to  pr_1^*T_{P/F}\oplus pr_2^*T_{P/F} \to N_{I_0/P\times P} \to
		N_{I_0/F\times F}\to 0 \ .\]
		Note that by construction, we have
		\[
		N_{I_0/P\times P} = q_0^*T_X.
		\]
		The Chern classes of $N_{I_0/F\times F}$ are computed as follows:
		\begin{align*}
		c(N) & = \frac{q_0^*c(T_X)}{pr_1^* c(T_{P/F})\cdot pr_2^* c(T_{P/F})}\\
		&= \frac{(1+h)^6}{(1+3h)(1+2 q_0^* h - g_1|_{I_0})(1+2 q_0^* h - g_2|_{I_0})}.
		\end{align*}
		The lemma follows from the expansion of the above equation.
	\end{proof}
	
	\begin{rmk}
		The previous lemma implies that 
		$$I^{2}|_{F\times F\backslash \Delta_{F}}=I\cdot
		(g_{1}^{2}+g_{1}g_{2}+g_{2}^{2})-3(g_{1}+g_{2})\Gamma_{h}+6\Gamma_{h^{2}}.$$
		Thus by Lemma \ref{lemma:Gammah} there exists $\alpha\in \Q$ and a polynomial
		$\Gamma_{2}$ such that in $\CH^{4}(F\times F)$ we have 
		$$I^{2}=\alpha\cdot
		\Delta_{F}+I\cdot(g_{1}^{2}+g_{1}g_{2}+g_{2}^{2})+\Gamma_{2}(g_{1}, g_{2},
		c_{1}, c_{2}),$$  for some $\alpha\in \Q$.
		This was proven by Voisin \cite{MR2435839}. In fact, $\alpha=2$, as is computed
		in \cite[Proposition 17.4]{SV}.
	\end{rmk}
	
	\begin{proof}[Proof of Theorem \ref{thm:Relation}]
		Let us first prove the theorem for a general cubic fourfold $X$. Fix three
		general hyperplane sections $H_{1}, H_{2}, H_{3}$ of $X$. For $i=1,2,3$, let 
		$$Z_{i}:=\left\{(l,l')\in F\times F~~\vert~~ l\cap l'\cap H_{1}\cap\cdots\cap
		H_{i}\neq \vide\right\}.$$
		On the one hand, as mentioned before, the class of $Z_{i}$ in $\CH^{2+i}(F\times
		F)$ is equal to $\Gamma_{h^{i}}$\,; on the other hand, denoting
		$Z_{i}^{o}:=Z_{i}\backslash \Delta_{F}$ the complement of the diagonal in
		$Z_{i}$, the class of $Z^{o}_{i}$ in $\CH^{i}(I_{0})$ is equal to
		$q_{0}^{*}(h^{i})$ by definition. This yields the diagram 
		
		\begin{displaymath}
		\xymatrix{
			Z^{o}_{3}\subset Z^{o}_{2}\subset Z_{1}^{o}\ar@{^{(}->}[r] & I_{0}
			\ar[d]^{q_{0}}\ar@{^{(}->}[r]^-{\iota} & F\times F\backslash \Delta_{F}\\
			&X&
		}
		\end{displaymath}
		Denoting $N$ the normal bundle of $\iota$, we obtain
		\begin{eqnarray*}
			I\cdot \Gamma_{h}|_{F\times F\backslash \Delta_{F}} &=& I_{0}\cdot
			\iota_{*}q_{0}^{*}\left(h\right)\\
			&=& \iota_{*}\left(q_{0}^{*}(h)\cdot c_{2}(N)\right)\\
			&=& \iota_{*}\left((g_{1}^{2}+g_{1}g_{2}+g_{2}^{2})|_{I_{0}}\cdot
			q_{0}^{*}(h)-3(g_{1}+g_{2})|_{I_{0}}\cdot
			q_{0}^{*}(h^{2})+6q_{0}^{*}(h^{3})\right)\\
			&=& \left((g_{1}^{2}+g_{1}g_{2}+g_{2}^{2})\cdot Z_{1}- 3(g_{1}+g_{2})\cdot
			Z_{2}+6 Z_{3}\right)|_{F\times F\backslash \Delta_{F}}\\
			&=& \left((g_{1}^{2}+g_{1}g_{2}+g_{2}^{2})\cdot \Gamma_{h}- 3(g_{1}+g_{2})\cdot
			\Gamma_{h^{2}}+6 \Gamma_{h^{3}}\right)|_{F\times F\backslash \Delta_{F}},
		\end{eqnarray*}
		where the third equality uses Lemma \ref{lemma:normalbundle}. By Lemma
		\ref{lemma:Gammah}, there exists a polynomial $P_{1}$ such that 
		$$I\cdot \Gamma_{h}|_{F\times F\backslash \Delta_{F}}=P_{1}(g_{1}, g_{2}, c_{1},
		c_{2})|_{F\times F\backslash \Delta_{F}}.$$ 
		Here, more precisely, one can compute by Lemma \ref{lemma:Gammah} and the
		relation $12gc=5g^{3}$ that $$P_{1}(g_{1}, g_{2}, c_{1},
		c_{2})=\frac{5}{12}\left(g_{1}^{4}g_{2}+4g_{1}^{3}g_{2}^{2}+4g_{1}^{2}g_{2}^{3}+g_{1}g_{2}^{4}-3g_{1}^{3}c_{2}-3g_{2}^{3}c_{1}\right).$$
		By the localization short exact sequence of Chow groups, there exists an element
		$D\in \CH^{1}(F)$ such that in $\CH^{5}(F\times F)$ we have 
		\begin{equation*}
		I\cdot \Gamma_{h}+ \Delta_{*}(D)=P_{1}(g_{1}, g_{2}, c_{1}, c_{2}).
		\end{equation*}
		Since $X$ is assumed (for now) to be general, $\CH^{1}(F)$ is generated by $g$,
		hence $D=\lambda g$ for some $\lambda\in \Q$. This yields that in
		$\CH^{5}(F\times F)$ we have
		\begin{equation}\label{eqn:IGammah1}
		I\cdot \Gamma_{h}+\lambda \Delta_{*}(g)=P_{1}(g_{1}, g_{2}, c_{1}, c_{2}).
		\end{equation}
		However, we know that $I\cdot c_{1}$, $I\cdot c_{2}$, $I\cdot g_{1}^{3}$ and
		$I\cdot g_{2}^{3}$ are polynomials in $g_{1}, g_{2}, c_{1}, c_{2}$ by
		\cite[Lemma~17.6]{SV} (\cf the known relations collected in the proof of
		Proposition \ref{prop:InjTaut}). The first formula in  Lemma \ref{lemma:Gammah}
		then yields that 
		\begin{equation}\label{eqn:IGammah2}
		I\cdot \Gamma_{h}=\frac{1}{3}I\cdot
		\left(g_{1}^{2}g_2+g_{1}g_{2}^{2}\right)+P_{2}(g_{1}, g_{2}, c_{1}, c_{2}) 
		\end{equation}
		for some polynomial $P_{2}$.
		
		Putting (\ref{eqn:IGammah1}) and (\ref{eqn:IGammah2}) together, we know that
		there exists a polynomial $Q$ such that the following equality holds in
		$\CH^{5}(F\times F)$\,:
		$$3\lambda\cdot \Delta_{*}(g)+I\cdot
		\left(g_{1}^{2}g_2+g_{1}g_{2}^{2}\right)=Q(g_{1}, g_{2}, c_{1}, c_{2}).$$
		By considering the action of both sides on the cohomology, we easily see that
		$\lambda=2$ and that
		\[
		Q(g_1,g_2,c_1,c_2) = \frac{1}{4}(g_1^4g_2 +g_1g_2^4) +\frac{7}{12}(g_1^3g_2^2 +
		g_1^2g_2^3).
		\] 
		Therefore the desired relation is proven for a general cubic fourfold. As all
		the cycles appearing are universally defined in the universal  Fano variety of
		lines, a specialization argument shows that this relation must also hold for any
		smooth cubic fourfold.
	\end{proof}

	\subsection{Some applications to the Fourier decomposition of $F$}
	\label{sec:appfourier}

	Our aim is to use Theorem \ref{thm:mainBD2}, which is based on Theorem
	\ref{thm:Relation}, to complement the results of \cite{SV} concerning the
	multiplicative structure of the Chow motive of the Fano variety of lines on a
	smooth cubic fourfold.

	\subsubsection{An explicit Chow--K\"unneth decomposition for $F$} 
	Recall that a Chow--K\"unneth decomposition for a smooth projective variety $X$
	of dimension $d$ is a decomposition of the diagonal $\Delta_X\in \CH^d(X\times
	X)$ into a sum $\Delta_X = \pi^0_X+\cdots +\pi^{2d}_X$ of mutually orthogonal
	idempotent correspondences $\pi_X^i \in \CH^d(X\times X)$ whose action in
	cohomology is given by $(\pi_X^i)_*H^*(X,\Q) = H^i(X,\Q)$. It is a conjecture of
	Murre that all smooth projective varieties should admit a Chow--K\"unneth
	decomposition. In \cite{SV}, it is shown that the Fano variety of lines on a
	smooth cubic fourfold admits a Chow--K\"unneth decomposition\,; see especially
	\cite[Theorem~3.3]{SV}. Such a decomposition is obtained by modifying the
	following correspondences in $\CH^4(F\times F)$\,:
	\begin{equation}\label{eq:CK-F}
	\pi_F^0 = \frac{1}{23\cdot 25}{l_1^2}, \ \ \pi_F^2  = \frac{1}{25}L\cdot l_1, \
	\ \pi_F^4 = \frac{1}{2} (L^2 - \frac{1}{25}l_1\cdot l_2),\ \ \pi_F^6  =
	\frac{1}{25}L\cdot l_2,\ \ \pi_F^8 = \frac{1}{23\cdot 25}{l_2^2}.
	\end{equation}
	Here, $L := \frac{1}{3}(g_1^2 + \frac{3}{2}g_1g_2 + g_2^2 - c_1 -c_2) - I \in
	\CH^2(F\times F)$ is a (and in fact ``the'', by Proposition~\ref{prop:InjTaut})
	tautological cycle representing the Beauville--Bogomolov form\,; see
	\cite[Proposition~19.1]{SV}. The cycle $l \in \CH^2(F)$ is the restriction of
	$L$ to the diagonal, and, as before, a subscript $i$ indicates the pull-back
	along the projection $F \times F \to F$ to the $i$-th factor.
	
	As was expected from \cite[Conjecture~3]{SV}, these correspondences already
	define a Chow--K\"unneth decomposition\,:
	\begin{prop}\label{prop:CK-F}
		The correspondences in \eqref{eq:CK-F} define a Chow--K\"unneth decomposition of
		$F$.
	\end{prop}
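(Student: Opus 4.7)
The approach is to check the defining identities of a Chow--K\"unneth decomposition (namely $\sum_i \pi_F^i = \Delta_F$, pairwise orthogonality $\pi_F^i \circ \pi_F^j = 0$ for $i \neq j$, idempotence, and the correct K\"unneth action on cohomology) by first verifying them cohomologically and then lifting to rational equivalence via Proposition~\ref{prop:InjTaut}, which asserts that the tautological ring $R^\ast(F \times F)$ injects into $H^\ast(F \times F, \Q)$.

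I would proceed in four steps. First, express $l = \Delta^*(L) \in \CH^2(F)$ in terms of $g$ and $c$, using the standard relations on $F$ together with the excess intersection formula applied to the inclusion $\Delta_F \subset I$ to compute $\Delta^*(I)$\,; this shows that each $\pi_F^i$ in \eqref{eq:CK-F} belongs to $R^\ast(F \times F)$. Second, verify that the cohomology classes $[\pi_F^i]$ realize the K\"unneth projectors, which is essentially built into the construction of $L$ and $l$ and the normalizing constants $25$ and $23 \cdot 25$ (which are Fujiki-type invariants of $F$). Third, establish closure of tautological cycles under composition of correspondences, by introducing a tautological ring $R^\ast(F^3)$ generated by pullbacks of $g, c$ and by the three pairwise diagonals and incidence correspondences, and checking that $(p_{13})_\ast$ applied to its elements lands back in $R^\ast(F \times F)$. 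Fourth, conclude: the obstruction cycles $\Delta_F - \sum_i \pi_F^i$ and $\pi_F^i \circ \pi_F^j - \delta_{ij} \pi_F^i$ are tautological (by steps 1 and 3) and cohomologically trivial (by step 2), hence vanish in $\CH^\ast(F \times F)_\Q$ by Proposition~\ref{prop:InjTaut}.

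The main obstacle is step 3, namely the bookkeeping needed to reduce pushforwards along $p_{13}$ of products of the generators of $R^\ast(F^3)$ to elements of $R^\ast(F \times F)$. This reduction relies on the relations among tautological cycles tabulated in the proof of Proposition~\ref{prop:InjTaut} and, crucially, on the new relation of Theorem~\ref{thm:Relation}, which controls the class $\Delta_\ast(g)$ that would otherwise appear as a non-tautological obstruction when composing correspondences involving the diagonal. It is precisely this new relation that allows the correspondences of \eqref{eq:CK-F} to define a Chow--K\"unneth decomposition without the ad hoc modifications needed in \cite{SV}.
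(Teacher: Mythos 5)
Your overall logic (the projectors are tautological, the defining identities hold in cohomology, and injectivity of the cycle class map on tautological classes upgrades them to rational equivalence) is the same as the paper's, and you correctly identify Proposition~\ref{prop:InjTaut} and the new relation of Theorem~\ref{thm:Relation} as the essential inputs. The difference is in how the compositions $\pi_F^i\circ\pi_F^j$ are handled. The paper does not introduce a tautological ring on $F^3$ at all: it observes that each $\pi_F^{2i}$ of \eqref{eq:CK-F} is the restriction to the fiber of a relative cycle on $\rF\times_B\rF$ (since $g$, $c$, $\Delta_F$ and $I$ are all universally defined), that compositions of relative correspondences restrict fiberwise to compositions, and then invokes Theorem~\ref{thm:mainBD2} -- which already packages Proposition~\ref{prop:AllTaut} (the image of $\CH^\ast(\rF\times_B\rF)\to\CH^\ast(F\times F)$ \emph{is} the tautological ring) together with Proposition~\ref{prop:InjTaut}. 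With that observation, the obstruction cycles $\sum_i\pi_F^i-\Delta_F$ and $\pi_F^i\circ\pi_F^j-\delta_{ij}\pi_F^i$ are universally defined and fiberwise homologically trivial (the cohomological statement being \cite[Corollary~1.7]{SV}), hence zero, with no computation on $F^3$ whatsoever.

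By contrast, your Step~3 -- proving directly that $(p_{13})_\ast$ of products of generators of a tautological ring $R^\ast(F^3)$ lands in $R^\ast(F\times F)$ -- is not mere bookkeeping, and as written it is the gap in your argument. The problematic compositions are those of the form $I\circ(m\cdot I)$ with $m$ a monomial in $g$ and $c$ on the middle factor; controlling these is essentially the computation that forced \cite{SV} to modify the projectors in the first place, and the paper itself points out (end of Appendix~\ref{sec:appfourier}) that a genuine understanding of tautological cycles on the relative cube $\rF\times_B\rF\times_B\rF$ remains open. Your step is weaker than the full Franchetta property for the cube and is very likely true, but you have not carried it out, and the classes such as $\Delta_\ast(g)$ that you flag would indeed appear and need Theorem~\ref{thm:Relation} to be tamed. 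The repair is cheap: replace Step~3 by the universality argument above, at which point Proposition~\ref{prop:AllTaut} does for free what you were trying to do by hand.
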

	\begin{proof}
		The correspondences $\pi_F^{2i}$ of \eqref{eq:CK-F} are cycles on $F\times F$
		that belong to the image of the restriction map $\CH^\ast(\rF\times_{B}\rF)\to
		\CH^\ast(F\times F)$, and they define a K\"unneth decomposition of the diagonal
		in cohomology by \cite[Corollary~1.7]{SV}. (Here $\rF\to B$ is the universal
		Fano variety of lines as defined in \S \ref{sect:BD}). It follows readily from
		Theorem \ref{thm:mainBD2} that they define a Chow--K\"unneth decomposition.
	\end{proof}

	\subsubsection{A new multiplicativity statement}
	Using the Chow--K\"unneth decomposition \eqref{eq:CK-F} given by Proposition
	\ref{prop:CK-F}, we can define, for all integers $i$ and $j$, \[
	\CH^i(F)_{(j)}:=(\pi_{F}^{2i-j})_\ast \CH^i(F)\ .\] 
	Concretely, we have (\emph{cf.} \cite{SV})
	\begin{align*}
	\CH^4(F) &= \CH^4(F)_{(0)} \oplus \CH^4(F)_{(2)} \oplus \CH^4(F)_{(4)}\\
	\CH^3(F) &= \CH^3(F)_{(0)} \oplus \CH^3(F)_{(2)}\\
	\CH^2(F) &= \CH^2(F)_{(0)} \oplus \CH^2(F)_{(2)}\\
	\CH^1(F) &= \CH^1(F)_{(0)}\\
	\CH^0(F) &= \CH^0(F)_{(0)}.
	\end{align*}
	In \cite{SV}, it was proven that for the Fano variety of lines on a \emph{very
		general} cubic fourfold, the decomposition $\CH^i(F)_{(j)}$ defines a bigrading
	on the Chow ring $\CH^\ast(F)$, in the sense that for all integers $i,i',j,j'$
	we have
	$$\CH^i(F)_{(j)} \cdot \CH^{i'}(F)_{(j')} \subseteq \CH^{i+i'}(F)_{(j+j')}.$$
	In the case of the Fano variety of lines on a non-very general cubic fourfold,
	the following two relations could not be established (see \cite[Remark
	22.9]{SV})\,:
	\begin{equation}\label{eq:mult}
	\CH^1(F) \cdot\CH^2(F)_{(0)} \subseteq \CH^3(F)_{(0)}\,;
	\end{equation}
	\begin{equation}\label{eq:mult2}
	\CH^2(F)_{(0)} \cdot\CH^2(F)_{(0)} \subseteq \CH^4(F)_{(0)} = \Q \cdot
	\mathfrak{o}_F.
	\end{equation}
	Using Theorem \ref{thm:mainBD2}, which is based on the new relation
	\eqref{eq:relation}, we can now prove one of the missing two inclusions\,:
	
	\begin{prop}\label{P:multF}
		Let $F$ be the Fano variety of lines on a smooth cubic fourfold. Then
		$$\CH^1(F)\cdot \CH^2(F)_{(0)}=\CH^3(F)_{(0)}.$$
	\end{prop}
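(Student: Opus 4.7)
The claim $\CH^1(F)\cdot \CH^2(F)_{(0)}\subseteq \CH^3(F)_{(0)}$ can be reformulated as the vanishing of a cycle class on $F\times F$. Namely, for $D\in\CH^1(F)$, let $\Delta_*(D)\in\CH^5(F\times F)$ be the class representing multiplication by $D$, and set
\[
\Gamma_D := \pi_F^4\circ \Delta_*(D)\circ \pi_F^4 \ \in\ \CH^5(F\times F).
\]
If $\Gamma_D = 0$, then $(\Gamma_D)_*\alpha' = (\pi_F^4)_*\bigl(D\cdot(\pi_F^4)_*\alpha'\bigr) = 0$ for every $\alpha'\in \CH^2(F)$; since $(\pi_F^4)_*\CH^2(F)=\CH^2(F)_{(0)}$ and $\CH^3(F)_{(0)} = \ker\bigl((\pi_F^4)_*|_{\CH^3(F)}\bigr)$, this yields the desired containment. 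Moreover, the cohomology class of $\Gamma_D$ vanishes by the standard multiplicativity of the K\"unneth decomposition in cohomology, using that $D$ is pure of weight $2$.

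The plan is to spread $D$ to a relative divisor and then invoke Theorem \ref{thm:mainBD2}. The projector $\pi_F^4$ is already relatively defined by Proposition \ref{prop:CK-F}, coming from a class $\pi_\rF^4\in\CH^4(\rF\times_B\rF)$. Since $F$ is hyper-K\"ahler with $h^{1,0}(F)=0$, the rational Picard group of $F$ injects into $H^2(F,\Q)$ as a sub-Hodge structure of type $(1,1)$. Hence the class $D$ extends, after passing to an appropriate Noether--Lefschetz sublocus $\tilde B\subset B$ containing the chosen point, to a relative divisor $\rD\in\CH^1(\rF|_{\tilde B})$. The relative correspondence $\Gamma_\rD$ on $\rF|_{\tilde B}\times_{\tilde B}\rF|_{\tilde B}$ is then fibrewise cohomologically trivial by the first paragraph.

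The final step is to run the proof of Theorem \ref{thm:mainBD2} on $\tilde B$, with the tautological ring of $F\times F$ augmented by the class $\Delta_*(D)$. The arguments of Propositions \ref{prop:AllTaut} and \ref{prop:InjTaut} extend to this setting, and the key new input enabling the injectivity of the cycle class map on the enlarged ring in codimension $5$ is the tautological relation of Theorem \ref{thm:Relation}. This yields $\Gamma_\rD|_{F\times F}=\Gamma_D=0$, as desired. The reverse inclusion $\CH^3(F)_{(0)}\subseteq \CH^1(F)\cdot \CH^2(F)_{(0)}$ follows from the description of $\CH^3(F)_{(0)}$ in \cite{SV} as being generated by products of divisors with classes in $\CH^2(F)_{(0)}$.

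The main obstacle is the last step: establishing the analogue of the injectivity of Proposition \ref{prop:InjTaut} once $\Delta_*(D)$ is added to the list of generators. The new tautological relation of Theorem~\ref{thm:Relation} is precisely the relation needed to match the dimension of the enlarged ring in codimension $5$ with its cohomological counterpart, which is what makes the whole argument go through.
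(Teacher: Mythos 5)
Your overall strategy---encoding the inclusion $\CH^1(F)\cdot\CH^2(F)_{(0)}\subseteq\CH^3(F)_{(0)}$ as the vanishing of the homologically trivial correspondence $\pi_F^4\circ\Delta_*(D)\circ\pi_F^4$ and concluding by a Franchetta-type argument---is the right one, and the first paragraph of your proposal is correct. But there is a genuine gap in how you handle an arbitrary divisor $D$. Spreading $D$ out over a Noether--Lefschetz sublocus $\tilde B\subset B$ destroys the mechanism that makes Theorem \ref{thm:mainBD2} work: Proposition \ref{prop:AllTaut} computes the image of $\CH^\ast(\rF\times_B\rF)\to\CH^\ast(F\times F)$ by exploiting the fact that $\rF\times_B\rF\to G\times G$ is a stratified projective bundle over the \emph{full} space $B\cong\P^{55}$ of cubic equations; over a proper sublocus $\tilde B$ the fibres of $q$ are no longer projective spaces, and no analogue of Proposition \ref{prop:AllTaut} is available (this would amount to a Franchetta property over Noether--Lefschetz loci, which is far beyond anything proved in the paper). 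Moreover, even granting an enlargement of the tautological ring by $\Delta_*(D)$, the injectivity of the cycle class map in codimension $5$ on the enlarged ring would require a tautological relation involving $\Delta_*(D)$ itself; Theorem \ref{thm:Relation} supplies such a relation only for $\Delta_*(g)$, so the claim that it is ``precisely the relation needed'' for general $D$ is unfounded. The sentence asserting that ``the arguments of Propositions \ref{prop:AllTaut} and \ref{prop:InjTaut} extend to this setting'' is where all the difficulty is hidden, and as stated it does not go through.

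The missing idea is a reduction to the single, universally defined divisor $g$. The paper invokes \cite[Proposition 22.7]{SV}, which reduces the inclusion \eqref{eq:mult} to showing $g\cdot\CH^2(F)_{(0)}\subseteq\CH^3(F)_{(0)}$ for the Pl\"ucker class alone. After this reduction, the correspondence $\Gamma=\pi_F^4\circ\Gamma_\iota\circ{}^t\Gamma_\iota\circ\pi_F^4$ (with $\iota\colon H\hookrightarrow F$ a Pl\"ucker hyperplane section, so that $\Gamma_\iota\circ{}^t\Gamma_\iota$ is multiplication by $g$) lies in the image of $\CH^5(\rF\times_B\rF)\to\CH^5(F\times F)$ over the full base, is homologically trivial, and hence vanishes by a direct application of Theorem \ref{thm:mainBD2}; no enlargement of the tautological ring and no Noether--Lefschetz locus is needed. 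Your treatment of the reverse inclusion is essentially the paper's (injectivity of the cycle class map on $\CH^3(F)_{(0)}$ plus hard Lefschetz), though you should make explicit that the input $\CH^1(F)\cdot\CH^1(F)\subseteq\CH^2(F)_{(0)}$ is what places triple products of divisors inside $\CH^1(F)\cdot\CH^2(F)_{(0)}$.
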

	
	\begin{proof}
		We first show that $\CH^3(F)_{(0)}\subseteq \CH^1(F)\cdot \CH^2(F)_{(0)}$. On
		the one hand, the cycle class map $\CH^{3}(F)_{(0)}\to H^{6}_{alg}(F, \Q)$ is an
		isomorphism\,; on the other hand, the hard Lefschetz isomorphism implies that
		$H^{6}_{alg}(F, \Q)$ is generated by $g^{2}\cdot H^{2}_{alg}(F, \Q)=g^{2}\cdot
		\CH^{1}(F)$. Hence $\CH^{3}(F)_{(0)}$ is generated by intersections of three
		divisors, which is contained in $\CH^{1}(F)\cdot \CH^{2}(F)_{(0)}$ since we know
		that $\CH^{1}(F)\cdot \CH^{1}(F)\subseteq \CH^{2}(F)_{(0)}$.\\
		For the reverse inclusion, which is (\ref{eq:mult}), by  \cite[Proposition
		22.7]{SV}, we only need to show that if $\alpha$ is a cycle in $\CH^2(F)_{(0)}$,
		then $g\cdot \alpha$ belongs to $\CH^3(F)_{(0)}$.  
		To this end, we consider the correspondence 
		\[ \Gamma:= \pi_F^4\circ \Gamma_\iota\circ {}^t \Gamma_\iota\circ \pi_F^4\ \ \
		\in \CH^5(F\times F)\ ,\]
		where $\iota\colon H\hookrightarrow F$ denotes the inclusion of a hyperplane
		with respect to the Pl\"ucker embedding. Clearly, $\Gamma$ is homologically
		trivial. But $\Gamma$ is universally defined, and so Theorem \ref{thm:mainBD2}
		implies that $\Gamma$ is rationally trivial. The action of $\Gamma$ on
		$\CH^2(F)_{(0)}$ is the same as
		\[ \CH^2(F)_{(0)}\ \xrightarrow{\cdot g}\ \CH^3(F)\ \xrightarrow{}\
		\CH^3(F)_{(2)}\ \]
		(where the second arrow is projection on a direct summand),
		and so we are done.
	\end{proof}
	
	With notations as in \S \ref{sect:LLSvS},
	it seems that the final missing inclusion \eqref{eq:mult2} cannot be obtained
	from considering the subring 
	$\im\left(\CH^\ast(\rF\times_{B}\rF)\to \CH^\ast(F_{b}\times F_{b})\right)$.
	Rather, a streamlined proof of all inclusions $\CH^i(F)_{(j)} \cdot
	\CH^{i'}(F)_{(j')} \subseteq \CH^{i+i'}(F)_{(j+j')}$ would follow from
	establishing that the Chow--K\"unneth decomposition \eqref{eq:CK-F} is
	\emph{multiplicative} in the sense of \cite[\S 8]{SV}, meaning that 
	$$\pi_F^k \circ \delta_F\circ (\pi^i_F \otimes \pi_F^j) = 0 \quad \text{in }
	\CH^8(F\times F\times F), \quad \text{for all } k\neq i+j,$$
	where $\delta_F$ denotes the class of the small diagonal in $F\times F\times F$
	viewed as a correspondence from $F\times F$ to $F$.
	This in turn would follow from establishing the Franchetta property for the
	relative cube of the universal Fano variety of lines, \ie \ from showing that
	$$\im\left(\CH^\ast(\rF\times_{B}\rF\times_{B}\rF)\to \CH^\ast(F_{b}\times
	F_{b}\times F_{b})\right)$$ injects into cohomology by the cycle class map for
	all $b$. An approach would consist in first showing that this subring consists
	of ``tautological cycles''  and then in establishing enough ``tautological
	relations'', as was done in Propositions \ref{prop:AllTaut} and
	\ref{prop:InjTaut} in the case of the relative square.

	\bibliographystyle{amsplain}

\end{document}